\title{\vspace{-1.0cm}\textbf{Scaling limits of anisotropic growth on logarithmic time-scales}}
\author{\textbf{George Liddle\footnote{Department of Mathematics and Statistics, Lancaster University, United Kingdom. Email: georgeliddle95@gmail.com}, Amanda Turner}\footnote{School of Mathematics, Leeds University, United Kingdom. Email: a.turner5@leeds.ac.uk. Supported by EPSRC grant EP/T027940/1}}
\newtheorem{theorem}{Theorem}[section]
\newtheorem*{theorem*}{Theorem}
\newtheorem{lemma}[theorem]{Lemma}
\newtheorem{corollary}[theorem]{Corollary}
\newtheorem*{corollary*}{Corollary}
\definecolor{blue}{rgb}{0.0,0.02,0.67}
\def\bb{\begin{color}{blue}}
\def\bw{\begin{color}{white}}
\def\bg{\begin{color}{green}}
\def\br{\begin{color}{red}}
\def\bbr{\begin{color}{brown}}
\def\eg{\end{color}}
\def\ew{\end{color}}
\def\er{\end{color}}
\def\eb{\end{color}}
\date{}
\renewcommand\nomgroup[1]{%
  \item[\bfseries
  \ifstrequal{#1}{A}{Conformal random growth models}{%
  \ifstrequal{#1}{B}{Anistotropic Hastings-Levitov model AHL$(\nu)$}{%
  \ifstrequal{#1}{C}{Other Symbols}{}}}%
]}
\begin{document}
\maketitle
\begin{abstract}
We study the anisotropic version of the Hastings-Levitov model AHL$(\nu)$. Previous results have shown that on bounded time-scales the harmonic measure on the boundary of the cluster converges, in the small-particle limit, to the solution of a deterministic ordinary differential equation. We consider the evolution of the harmonic measure on time-scales which grow logarithmically as the particle size converges to zero and show that, over this time-scale, the leading order behaviour of the harmonic measure becomes random. Specifically, we show that there exists a critical logarithmic time window in which the harmonic measure flow, started from the unstable fixed point, moves stochastically from the unstable point towards a stable fixed point, and we show that the full trajectory can be characterised in terms of a single Gaussian random variable. 
\end{abstract}

\textbf{Key words}: Planar random growth; scaling limits; fluctuations; anisotropic growth.


\noindent
\section{Introduction}

The aim of this paper is to study the behaviour of a class of random growth processes modelled using conformal mappings. In recent years, many models have been introduced in order to study various real world random growth processes from lightning strikes and mineral aggregation to tumoral growth. The most well known examples include the Eden model \cite{E} and DLA \cite{DLA}. These models, built on a lattice, have been well studied but rigorous results have proved difficult to come by (see for example \cite{HK2}). One reason for this is that lattice based models provide little in the way of mathematical techniques that can be used to study their behaviour. One way to combat this difficulty is to form off-lattice versions of the models using conformal mappings which allows us to study the processes in the complex plane and use complex analysis techniques. 

The models are constructed using a basic particle shape. In this paper, we will restrict ourselves to `slit' particles, although the results can be extended to more general particle shapes. Given a desired slit length $d > 0$, set 
$$c=\log \left (1+\frac{d^2}{4(1+d)} \right ) > 0.$$
Then, setting $\Delta = \{ |z|>1\}$ to be the exterior unit disk, there exists a unique single slit mapping $$f_{(c)}:\Delta \to \Delta \backslash (1,1+d]$$ which takes the exterior of the unit disk to itself minus a slit of length $d$ at $z=1$.
It can be shown (see for example \cite{STV2}), that
$$f_{(c)}(z)=e^{c}z+\mathcal{O}(1)$$
as $|z| \to \infty$. We refer to $c$ as the capacity of the slit. It follows that there is a one-to-one correspondence between capacities and conformal maps attaching a slit onto the boundary of the disk at $1$. Given a sequence  of capacities, $c_n \in (0, \infty)$, and attachment angles, $\theta_n \in [0,1]$ (identifying the unit circle with the interval $[0,1]$), we define a sequence of rescaled and rotated slit mappings by setting 
\begin{equation}\label{fmaps}
f_n(z)=e^{2\pi i\theta_n} f_{(c_n)}(ze^{-2\pi i  \theta_n}).
\end{equation}
This mapping corresponds to attaching a slit of capacity $c_n$ to the unit circle at position $e^{2 \pi i \theta_n}$.
One can then form a growing sequence of clusters by composing the slit maps. Let $\phi_0(z) = z$ so that $\phi_0 : \Delta \to \mathbb{C} \setminus K_0$ is a bi-holomorphic map where $K_0= \{|z|\leq 1\}$. Now define
$$\phi_{n+1}=\phi_n \circ f_{n+1}=f_{1}\circ f_{2} \circ \cdots \circ f_{n+1}.$$
Then there exists a sequence of compact sets $K_n$ satisfying $K_{n} \subsetneq K_{n+1}$ for which $\phi_n: \Delta \to \mathbb{C} \setminus K_n$ is a bi-holomorphic map which fixes the point at infinity. The sequences of capacities $c_n$ and angles $\theta_n$ therefore define a growing sequence of clusters $K_n$. By choosing the attachment angles and capacities appropriately we can model a wide class of growth processes.

\subsection{Previous work}

The Hastings-Levitov model \cite{HL} is formed using conformal maps as described above when the attachment angles are chosen uniformly. This choice represents a good model for many of the real world processes where particles diffuse onto the boundary at each iteration (for a more detailed description see, for example, \cite{GL}). Furthermore, the capacities are chosen as,
$$c_n=c|\phi_{n-1} '(e^{i\theta_n})|^{-\alpha}.$$
The parameter $\alpha$ allows us to vary between off-lattice versions of the previously well studied models by varying the size of the attached slits. By choosing the capacities and attachment angles in this way we can model a wide class of real world Laplacian growth processes where the local growth rate is chosen according to harmonic measure. In recent years research into the Hastings-Levitov model has been fruitful. The majority of the results have concentrated on the scaling limits of the model in the small-particle limit where the cluster $\phi_n$ is analysed as the particle capacity $c\to 0$ while $n\to \infty$ with $nc\sim t$ for some $t$. In \cite{NT} Norris and Turner show that in the small-particle limit, for $\alpha=0$, the limiting cluster behaves like a growing disk. Furthermore, in \cite{STV1} Sola, Turner and Viklund show that in the small-particle limit the shape of the cluster approaches a disk for all $\alpha\geq 0$ in the presence of a sufficiently strong regularisation. Moreover, Silvestri \cite{S} shows that the fluctuations on the boundary, for HL$(0)$, in the small-particle limit can be characterised by a log-correlated Gaussian field.

The Hastings-Levitov model has also been evaluated under another scaling limit where rather than letting $c\to0$ as $n\to\infty$, instead, the limit of the cluster is found by rescaling the whole cluster by the logarithmic capacity of the cluster at time $n$, before taking limits as the number of particles tends to infinity.  In \cite{RZ} Rohde and Zinsmeister introduce a regularisation to the model and show that in the case of $\alpha=0$ the rescaled cluster converges to a (random) limit with respect to the topology of normalised exterior Riemann maps. In \cite{GL}, Liddle and Turner show that for $\alpha=0$ the scaling limit of the cluster under capacity rescaling is not a disk. Furthermore the authors study a regularised version of the model and show that for $0<\alpha<2$ the scaling limit under capacity rescaling is a disk and the fluctuations behave like a Gaussian field.

However, studies have also been made into a wider class of processes where the particles are not attached uniformly. The ALE$(\alpha, \eta)$ model introduced in \cite{STV2} generalises the Hastings-Levitov by choosing the local growth rate to be determined by $|\phi_n'|^{-\eta}$ where $\eta \in \mathbb{R}$. The authors show that there exists a phase transition at $\eta=1$ when $\alpha=0$ where the limiting shape in the small particle limit transitions from a disk to a radial slit. This model is also studied in \cite{NST} and \cite{NVT}, where it is shown that clusters are disk-like provided $\alpha+\eta \leq 1$, and there is a phase-transition present in the fluctuations when $\alpha+\eta=1$. In \cite{FH}, Higgs considers the model for $\alpha=0$ and for large negative values of the parameter $\eta$ where the particles are attached in areas of low harmonic measure and shows that there exists a phase transition beyond which the ALE cluster converges to an SLE$_4$ curve.

The final generalisation is the anisotropic version of the Hastings-Levitov model AHL$(\nu)$ which will be the subject of this paper. The AHL$(\nu)$ model is a variation of the Hastings-Levitov model HL$(0)$. The model is constructed as above with the capacities chosen to be a fixed value $c$ and the attachment angles chosen to be i.i.d on the unit circle but, in contrast to HL$(0)$ where the attachment points are distributed uniformly, they are chosen  according to some non-uniform probability measure $\nu$. In \cite{JTS} Sola, Turner and Viklund show that if 
 $\overline{\phi}$ is the solution to Loewner-Kufarev equation driven by the measure $\nu$ and  $\phi_n=f_1\circ \cdots \circ f_n$ then $\phi_{\lfloor t/c \rfloor}\to\overline{\phi}$ uniformly on compact sets in the exterior unit disk almost surely as $c\to 0$. Furthermore, the authors study the scaling limits of the harmonic measure flow (defined below) and show that on bounded time-scales they can be described by the solution to a deterministic ordinary differential equation related to the Loewner equation. In contrast, in this paper we will evaluate the scaling limits of the harmonic measure flows on logarithmic time-scales and we will show that the leading order behaviour is random. 

\subsection{Overview of the main results and physical interpretation}

In this paper, we study the anisotropic Hastings-Levitov model introduced in \cite{JTS} as AHL$(\nu)$.  The shape of the cluster in the small particle limit is described in \cite{JTS}. However, we often want to understand the ancestral path of each attached particle. Evaluating how the harmonic measure evolves on the boundary of the cluster allows us to do so. For the purpose of the introduction we define the discrete harmonic measure flow for $x\in \mathbb{R}$ as
\begin{align*}
X_n(x)=\frac{1}{2\pi i}\log (\Gamma_n(e^{2\pi i x})).
\end{align*}
where $\Gamma_n(z)=\phi_{n}^{-1}(z)=f_n^{-1}\circ \cdots \circ f_1^{-1}(z)$. However, some care is needed in how to interpret this definition. Although $\phi_n(z)$ can be defined when $|z|=1$ by requiring that the extended map $\phi_n: \overline{\Delta} \to \overline{\mathbb{C} \setminus K_n}$ is continuous, this map is no longer bijective. Therefore $\Gamma_n(z)$ is not well-defined everywhere on the cluster boundary. We explain how to overcome this difficulty in Section \ref{sec:harm} where we give a more explicit definition of what we mean by harmonic measure flow. The function $X_n(x)$ tells us how the harmonic measure on the cluster boundary, as seen from infinity, evolves under the map $\phi_n$. Specifically, if $x < y < 1 + x$, then the quantity $X_n(y) - X_n(x)$ gives the harmonic measure of the portion of the cluster boundary between $e^{2 \pi i x}$ and $e^{2 \pi i y}$,  traced out in an anticlockwise direction. Our aim is to identify how this function evolves in the scaling limit as the particle size becomes small, while the number of particles increases. It is convenient to embed harmonic measure flow into continuous time so we will consider the scaling limit of $X_{n(t)}(x)$ on logarithmic time-scales as $c\to 0$ where $n(t)=\left\lfloor \frac{t}{c} \right\rfloor $.

In this section we describe the main results of the paper and their physical interpretation. As an illustration we consider an AHL($\nu$) cluster where the measure $\nu$ is concentrated on a segment of the disk such as  $d \nu( e^{2\pi i x})= 2\mathbbm{1}(x \in [0, 1/2]) dx$, as chosen in Figure 2 from \cite{JTS} which has been reproduced in this paper as Figures \ref{fig3} and \ref{fig4}. Figure \ref{fig3} shows an AHL$(\nu)$ cluster together with its limit shape, and Figure \ref{fig4} provides the corresponding evolution of harmonic measure on the boundary of the cluster together with the solution to a deterministic ODE. This figure illustrates how, on compact time intervals, the harmonic measure flow on the boundary of the cluster converges to the solution of the deterministic ODE. We also include cartoons (Figures \ref{fig2}, \ref{fig6} and \ref{fig1} below) to aid our descriptions. It should be noted that these cartoons are not accurate simulations and are not drawn to scale but instead serve to highlight specific features of a potential evolution of an AHL$(\nu)$ cluster. 

\begin{figure}[!htb]
\centering
 \includegraphics[width=0.75\linewidth]{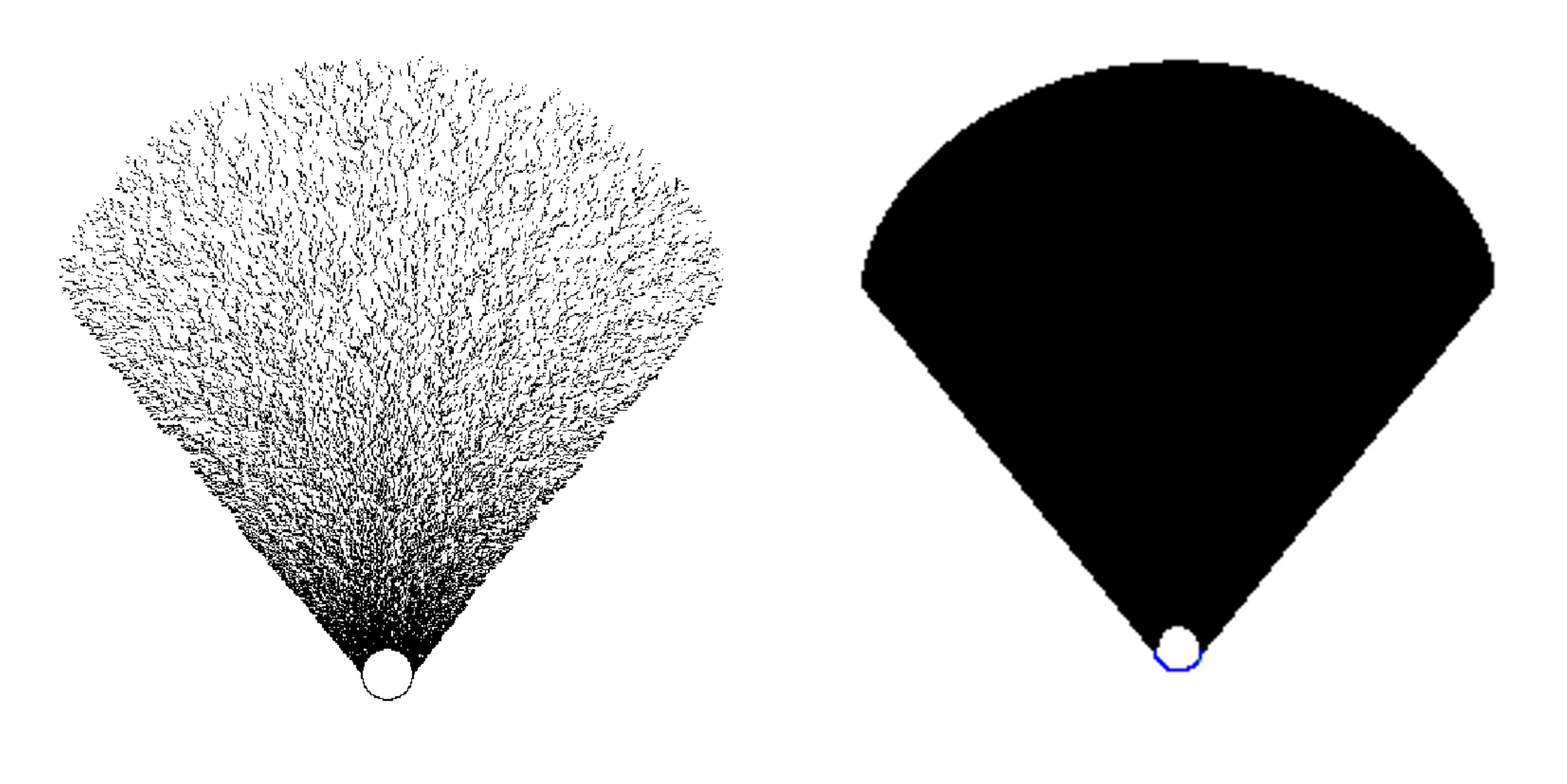}
  \caption{ An example of a AHL$(\nu)$ cluster (left) and the corresponding Loewner hull (right) from \cite{JTS}.}\label{fig3}
\end{figure}
\begin{figure}[!htb]
\centering
 \includegraphics[width=0.9\linewidth]{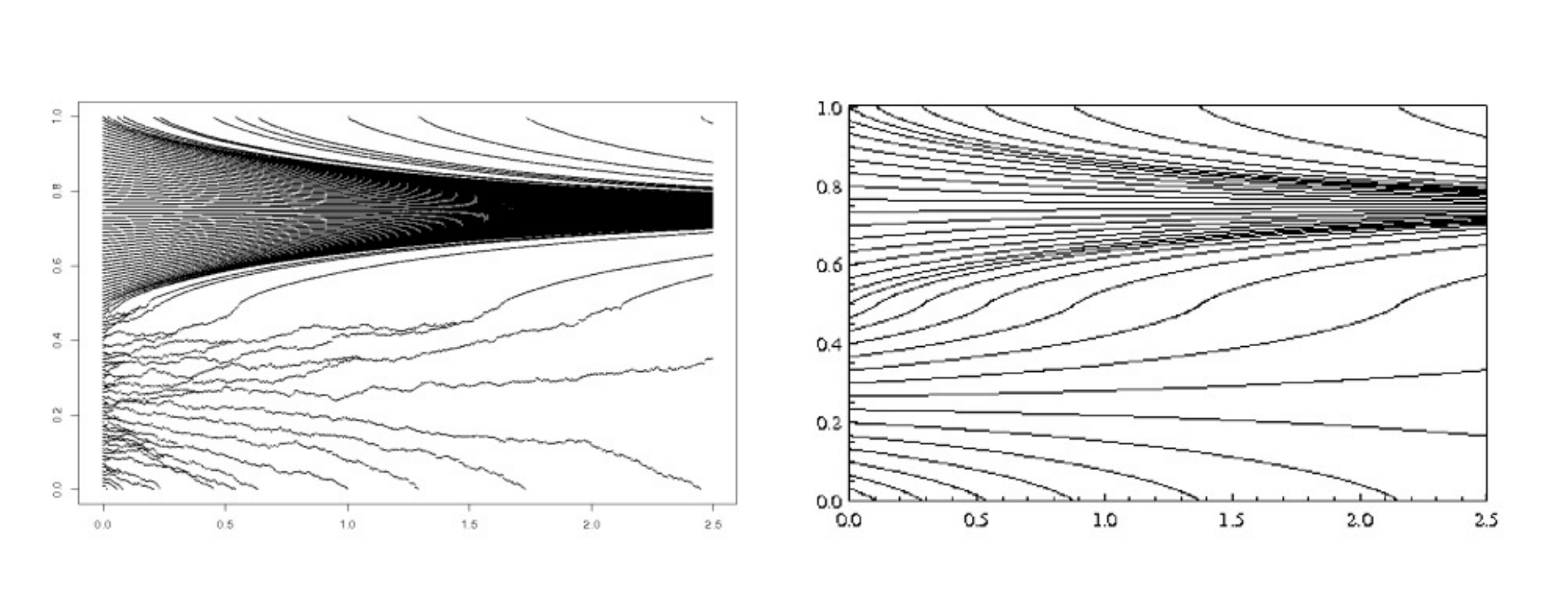}
  \caption{Harmonic measure flow $X_{n(t)}(x)$ on the boundary of AHL$(\nu)$ plotted against time with the departure point $x$ indicated on the $y$-axis (left) and the solution to a corresponding deterministic ODE (right) from \cite{JTS}.}\label{fig4}
\end{figure}

In order to state our main results we need to introduce some preliminary assumptions and notation. We shall assume that the measure $\nu$ has a twice continuously differentiable density function $h_\nu$, defined on $[0,1]$ and extended periodically to $\mathbb{R}$, so
\[
\nu(e^{2 \pi i x}) = h_\nu(x) dx.
\]
In practice, this assumption is not very restrictive as most natural measures in this setting can be arbitrarily well approximated by a measure satisfying this condition. Define the function $b: \mathbb{R} \to \mathbb{R}$ to be the Hilbert transform of the measure $\nu$,
\begin{equation} \label{bequation}
b(x)=\frac{1}{2\pi}\int_0^1 \cot (\pi z)( h_{\nu}(x-z)-h_{\nu}(x))dz.
\end{equation}
For $t \in [0, \infty)$ and $x \in \mathbb{R}$, let $\psi_t(x)$ be the solution to the ordinary differential equation,
\begin{equation}\label{ode1}
\dot{\psi}_t(x)=b(\psi_t(x))
\end{equation}
with initial condition $\psi_0(x)=x$.

We consider the evolution of the harmonic measure $X_{n(t)}$. Our first main result, appearing later as Theorem \ref{3.6II}, extends the result in \cite{JTS} on convergence of the harmonic measure flow from compact time intervals to logarithmic time-scales.

\begin{theorem*}
Set $T_0=\frac{1}{4\|b'\|_{\infty}}\left(\log(c^{-1})-3\log(\log(c^{-1}))\right)$. Then, for every $x \in \mathbb{R}$,
\[
\sup_{0 \leq t \leq T_0} |X_{n(t)}(x) - \psi_t(x)| \to 0
\]
in probability as $c\to 0$.
\end{theorem*}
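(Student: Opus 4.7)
The plan is to realize $X_n(x)$ as a small perturbation of the Euler scheme for the ODE \eqref{ode1}, and then combine a Doob-type estimate for the martingale fluctuations with a discrete Gronwall bound whose exponential factor is precisely calibrated to the scale $T_0$.

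Fix $x \in \mathbb{R}$ and let $\mathcal{F}_n = \sigma(\theta_1,\dots,\theta_n)$. First I would write the semimartingale decomposition
$$X_{n+1}(x) - X_n(x) = c\, b(X_n(x)) + \Delta R_{n+1}(x) + \Delta M_{n+1}(x),$$
where $\Delta R_{n+1}(x) = \mathbb{E}[X_{n+1}(x) - X_n(x) \mid \mathcal{F}_n] - c\, b(X_n(x))$ and $\Delta M_{n+1}(x)$ is a martingale difference. The map $f_{n+1}^{-1}$ acts on the circle by an explicit smooth function of the angular gap $X_n(x)-\theta_{n+1}$ outside an arc of width $O(\sqrt c)$ around $\theta_{n+1}$, and by an $O(\sqrt c)$ perturbation inside it. Taking expectations against $h_\nu \in C^2$ produces the Hilbert-transform drift $c\, b(X_n(x))$ together with the pointwise bounds
$$|\Delta R_{n+1}(x)| \leq C c^2, \qquad \mathrm{Var}\bigl(X_{n+1}(x) - X_n(x) \mid \mathcal{F}_n\bigr) \leq C c^{3/2}.$$
The second bound reflects a cancellation between a probability $O(\sqrt c)$ of landing in the singular arc (with $O(c)$ variance contribution there) and the $1/(x-\theta)^2$ tails of the bulk contribution cut off at scale $\sqrt c$.

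Next, with $M_n(x)=\sum_{k=1}^{n}\Delta M_k(x)$ of quadratic variation bounded by $C T_0 c^{1/2}$ at time $n(T_0)$, Doob's $L^2$ maximal inequality gives, for any $\varepsilon>0$,
$$\mathbb{P}\Bigl(\sup_{n\leq n(T_0)}|M_n(x)|\geq c^{1/4}(\log c^{-1})^{1/2+\varepsilon}\Bigr) \leq \frac{C T_0 c^{1/2}}{c^{1/2}(\log c^{-1})^{1+2\varepsilon}} = \frac{C}{(\log c^{-1})^{2\varepsilon}} \longrightarrow 0.$$
Setting $Y_n(x) = X_n(x) - \psi_{nc}(x)$ and combining the decomposition above with the Euler-step error $|\psi_{(n+1)c}(x) - \psi_{nc}(x) - c\, b(\psi_{nc}(x))| \leq C c^2$ and the Lipschitz bound on $b$ yields
$$|Y_{n+1}(x)| \leq (1 + c\|b'\|_\infty)|Y_n(x)| + |\Delta M_{n+1}(x)| + C c^2.$$
Isolating the martingale via $Z_n = Y_n - M_n$ and iterating the resulting discrete Gronwall inequality on $Z_n$ (with $Y_0 = Z_0 = 0$) produces
$$\sup_{n\leq n(T_0)} |Y_n(x)| \leq 2\, e^{\|b'\|_\infty T_0}\Bigl(\sup_{n\leq n(T_0)}|M_n(x)| + C T_0 c\Bigr).$$

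The crucial calibration is now explicit: by definition of $T_0$,
$$e^{\|b'\|_\infty T_0} = \exp\Bigl(\frac{1}{4}\log c^{-1} - \frac{3}{4}\log\log c^{-1}\Bigr) = c^{-1/4}(\log c^{-1})^{-3/4},$$
so the two bounds combine to give $\sup_{n\leq n(T_0)}|Y_n(x)| \leq 2(\log c^{-1})^{-1/4+\varepsilon} + 2Cc^{3/4}(\log c^{-1})^{1/4}$ with probability tending to one, which is the desired convergence in probability once $\varepsilon<1/4$. The main obstacle I anticipate is establishing the sharp $O(c^{3/2})$ conditional variance uniformly over the values $X_n(x)$ can take, since the slit-map update is nearly singular inside the attachment window; a careful expansion of $f_{(c)}^{-1}$ on the unit circle, together with the fact that $b$ is globally Lipschitz and $\psi_t$ remains in a bounded set, should make this tractable. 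The subleading $3\log\log c^{-1}$ correction in $T_0$ is precisely what is needed to absorb the Doob slack $(\log c^{-1})^{1/2}$, and its presence anticipates the breakdown of deterministic behaviour in the critical logarithmic window treated by the paper's subsequent fluctuation theorem.
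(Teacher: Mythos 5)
Your proposal is correct and follows essentially the same architecture as the paper's proof: decompose $X_n(x)$ as an Euler step plus a drift-error term plus a martingale, control the martingale supremum, apply a (discrete) Gronwall inequality, and observe that the $-3\log\log c^{-1}$ correction in $T_0$ is exactly what makes $e^{\|b'\|_\infty T_0} = c^{-1/4}(\log c^{-1})^{-3/4}$ small enough to kill both error terms. The one genuine difference is the choice of martingale maximal inequality: the paper invokes Freedman's Bernstein-type inequality (its Theorem \ref{freed} via Lemma \ref{3.5}) to get a tail bound of the form $\exp(-\kappa(\log c^{-1})^{1/2})$, whereas you use Doob's $L^2$ maximal inequality, which only gives a polynomial decay $O((\log c^{-1})^{-2\varepsilon})$. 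For the stated result — convergence in probability — both are perfectly adequate, and your route is more elementary; the Freedman bound would matter if one wanted, say, uniformity over a mesh of starting points $x$ or almost-sure statements, where the exponential tail allows union bounds, but none of that is needed here. One small inaccuracy: your claimed drift-error bound $|\Delta R_{n+1}(x)| \leq C c^2$ is sharper than what the paper establishes — Lemma \ref{2.4} only gives $|\beta_\nu(x) - c\,b(x)| \leq \delta c^{3/2}\log(c^{-1})$, and the singular nature of the cotangent kernel near the attachment point makes the extra $\log c^{-1}$ factor genuinely hard to avoid. Fortunately this does not damage your argument: accumulating $O(c^{3/2}\log c^{-1})$ over $n(T_0) \sim T_0/c$ steps and multiplying by the Gronwall factor $e^{\|b'\|_\infty T_0}$ still yields $O(c^{1/4}(\log c^{-1})^{1/4}) \to 0$, so the conclusion survives once the drift bound is corrected to the one the paper actually proves.
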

\begin{figure}[!htb]
\centering
 \includegraphics[width=0.8\linewidth]{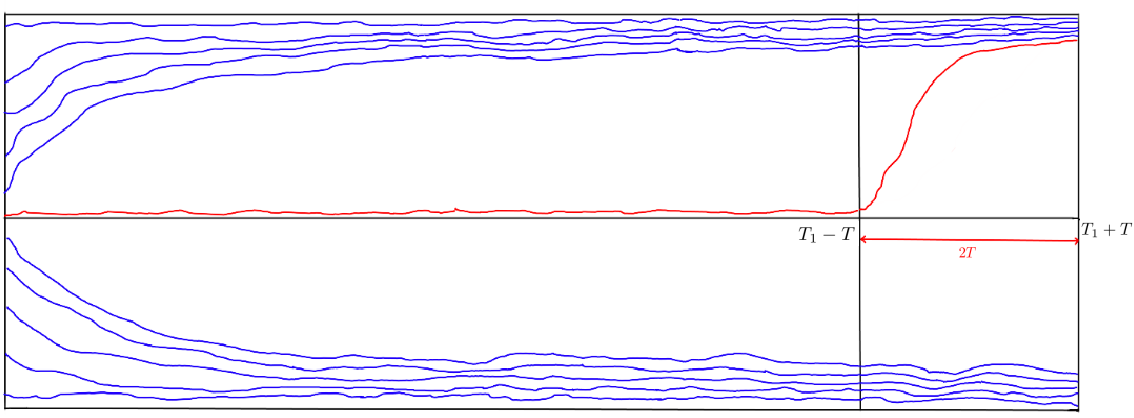}
  \caption{Cartoon illustrating the evolution of $X_{n(t)}$ on logarithmic-timescales. }\label{fig2}
  \end{figure}

This result shows that, on the given logarithmic time-scale, all the trajectories of the harmonic measure  process $X_{n(t)}(x)$ remain close to the corresponding deterministic trajectories of the ordinary differential equation $\psi_t(x)$. We illustrate this in Figure \ref{fig2} with each blue trajectory remaining close to the solution to the ODE up to this time. The value of $T_0$ is not shown explicitly in this figure, but is smaller than the marked value of $T_1 - T$.

Yet, consider the simulations in Figure \ref{fig4} taken from \cite{JTS}. In this figure, one can observe that the harmonic measure started at the unstable fixed point, $x=1/4$, of the deterministic ODE begins to deflect away from the value of fixed point towards the end of the time interval depicted in the simulation. This behaviour is captured in the following result which appears later as Corollary \ref{corol4.8}. 

\begin{theorem*}
Let $a_u$ be an unstable fixed point of $\psi_t(x)$. Then for $0<t<\infty$, $X_{n(t)}(a_u)$ converges to $\psi_{t}\left(a_u+c^{\frac{1}{4}}Z_{\infty}(a_u)\right)$ in probability as 
$c\to 0$, where $ Z_{\infty}(a_u)$ is a Gaussian random variable with mean $0$ and variance which can be given explicitly in terms of the measure $\nu$. 
\end{theorem*}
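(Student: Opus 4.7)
The plan is to combine the fluctuation theorem (Theorem \ref{fluc}) with the logarithmic-scale convergence result (Theorem \ref{3.6II}) using the Markov property of the AHL process. Fix a reference time $T_* = T_*(c)$ inside the critical logarithmic window for which Theorem \ref{fluc} yields a limit of the form
\[
c^{-1/4}\bigl(X_{n(T_*)}(a_u) - a_u\bigr) \to W_\infty
\]
where $W_\infty$ is the Gaussian random variable whose variance is expressible in terms of $\nu$. The corollary is a statement that once this Gaussian fluctuation has been generated during the critical window, the remainder of the evolution of $X_{n(t)}(a_u)$ simply tracks the deterministic flow $\psi$ started from the perturbed initial data $a_u + c^{1/4} Z_\infty(a_u)$.

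The first key step is to invoke the Markov property. Because $\phi_n = f_1 \circ \cdots \circ f_n$ is built from i.i.d.\ attachment angles, conditional on the cluster at time $n(T_*)$ the subsequent harmonic measure flow
\[
X_{n(T_*) + m}(a_u) = \widetilde{X}_m\bigl(X_{n(T_*)}(a_u)\bigr)
\]
has the same law as a fresh AHL$(\nu)$ flow $\widetilde{X}$ evaluated at the random initial point $X_{n(T_*)}(a_u)$, independent of the first $n(T_*)$ attachments. Since this initial point lies within a window of size $c^{1/4}$ of $a_u$ with high probability, applying Theorem \ref{3.6II} to $\widetilde{X}$ (extended so as to allow a starting point in a shrinking neighbourhood of $a_u$) gives
\[
X_{n(t)}(a_u) = \psi_{t - T_*}\bigl(X_{n(T_*)}(a_u)\bigr) + \varepsilon_c,
\]
where $\varepsilon_c \to 0$ in probability as $c \to 0$, with error bounded on the order of $c^{1/4}$.

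The final step is to linearise $\psi$ around the fixed point $a_u$. Using $b(a_u)=0$, Taylor expansion gives $\psi_s(a_u + \varepsilon) = a_u + e^{b'(a_u) s}\varepsilon + O(\varepsilon^2)$ uniformly in bounded $s$. Combined with the fluctuation theorem this yields
\[
X_{n(t)}(a_u) = a_u + c^{1/4} e^{b'(a_u)(t - T_*)} W_\infty + o(c^{1/4})
\]
to leading order in probability, while the same linearisation of $\psi_t$ gives
\[
\psi_t\bigl(a_u + c^{1/4} Z_\infty(a_u)\bigr) = a_u + c^{1/4} e^{b'(a_u) t} Z_\infty(a_u) + O(c^{1/2}).
\]
Matching coefficients forces $Z_\infty(a_u) = e^{-b'(a_u) T_*} W_\infty$, which is Gaussian with an explicit variance in $\nu$, and the corollary follows.

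The main obstacle will be propagating Theorem \ref{3.6II} through the random initial condition $X_{n(T_*)}(a_u)$ with adequate uniformity: as stated, the theorem gives convergence from a deterministic starting point $x$, whereas here the starting point is random but close to $a_u$. This will require either strengthening Theorem \ref{3.6II} to be uniform in $x$ on a neighbourhood of $a_u$ shrinking at rate $c^{1/4}$, or conditioning on $X_{n(T_*)}(a_u)$ and exploiting the tightness supplied by Theorem \ref{fluc}. A secondary issue is to check that the resulting $Z_\infty(a_u)$ is intrinsic, i.e.\ independent of the particular choice of reference time $T_*$ in the $c \to 0$ limit; this should follow because the exponential factor $e^{-b'(a_u) T_*}$ cancels precisely the exponential growth of the accumulated noise between the two candidate times.
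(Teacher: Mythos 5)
There is a genuine gap, and it sits exactly where the paper has to work hardest. Your first step asserts that Theorem \ref{fluc} gives a Gaussian limit for $c^{-1/4}\bigl(X_{n(T_*)}(a_u)-a_u\bigr)$ at a reference time $T_*=T_*(c)$ \emph{inside} the critical logarithmic window. Theorem \ref{fluc} is a compact-time statement: it identifies the limit of $\widetilde{Z}_{n(t)}(a_u)$ for fixed $t$ (convergence in the Skorokhod topology on bounded intervals), and nothing in it survives letting $t$ grow like $\log(c^{-1})$. Indeed, at times of order $\tfrac{1}{4\lambda_u}\log(c^{-1})$ the displacement $X_{n(T_*)}(a_u)-a_u$ is macroscopic (that is what ``critical window'' means), so the $c^{1/4}$ normalisation you propose at $T_*$ is not even the right order. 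Nor can Theorem \ref{3.6II} bridge from a fixed time $t_0$ to $T_*$: it compares $X_{n(t)}(a_u)$ with $\psi_t(a_u)\equiv a_u$ to within a fixed tolerance $\epsilon$, which is hopelessly coarse for tracking a perturbation of size $c^{1/4}$ that is being amplified by a factor $e^{\lambda_u t}$; moreover its horizon $T_0$ is set by $\|b'\|_\infty\geq\lambda_u$ and so in general falls short of the window. Propagating the fixed-time fluctuation $\widetilde{Z}_{n(t_0)}(a_u)$ through the window is precisely the content of the paper's Section \ref{sec:crit}: one studies the exponentially rescaled difference $g(t,y)=e^{-I(0,t)}\bigl(y-\psi_{t-t_0}(X_{k_0}(a_u))\bigr)$, controls the weighted martingale part via Freedman's inequality (Lemma \ref{Mbound}), bounds the Taylor remainders including the second-derivative terms $\psi_t''$ up to the stopping time $T_1^*$ (Lemma \ref{Lbound}, Theorem \ref{4.2}), and then pins down $T_1^*\approx\tfrac{1}{4\lambda_u}\log(c^{-1})$ (Theorem \ref{4.4a}). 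Your proposal has no substitute for this analysis; assuming the Gaussian limit already holds at $T_*$ begs the question. Your linearisation step has the same flaw: the $O(\varepsilon^2)$ error with $\varepsilon=c^{1/4}$ is only negligible after multiplication by $e^{\lambda_u t}\sim c^{-1/4}$ if one controls $\psi_t''$ carefully on the window, as the paper does.

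The parts of your outline that are sound are exactly the parts the paper also uses, but only \emph{after} the window has been crossed: the Markov-type decomposition $X_n=X_{n,k}\circ X_k$ at $k=n(T_1^*)$ together with a compact-time application of Theorem \ref{3.6II} from the random point $X_{n(T_1^*)}(a_u)$ is the argument of Theorem \ref{4.3a}, and the replacement of $\widetilde{Z}_{n(t_0)}(a_u)$ by $Z_\infty(a_u)$ (via the double limit $c\to0$, then $t_0\to\infty$, not via a single window time with an $e^{-\lambda_u T_*}$ rescaling) is Theorem \ref{4.5a}. Finally, the statement is uniform over all $t\in(0,\infty)$, so you also need the separate argument that once the flow is near a stable fixed point it stays near the stable trajectory forever (Theorem \ref{4.7a}); this cannot come from Theorem \ref{3.6II}, which only holds up to a logarithmic time, and is absent from your proposal.
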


This result tells us that the harmonic measure flow started from the unstable point $a_u$ tracks the solution to the ODE started from some random perturbation of $a_u$. This perturbation is amplified by the ODE system over logarithmic time-scales. Therefore there exists a random time $T_1$, on a logarithmic time-scale, by which point $X_{n(t)}(a_u)$ has moved a macroscopic distance away from the fixed point $a_u$. Once the process reaches this macroscopic distance it remains close to the trajectory of the ODE started from that distance at time $T_1$. However, we know that for 1-dimensional ODEs, trajectories started away from unstable points converge to stable points. Therefore, once the process gets close enough to the stable point it remains close.  
This behaviour is illustrated in Figure \ref{fig2}.  The red trajectory represents the behaviour of the harmonic measure started at the unstable point. If it converged to the solution of the ODE we would expect this trajectory to remain close to the unstable point for all time. However, the cartoon shows the stochastic nature of the path the trajectory takes from the unstable point towards a stable point during the critical time window.

We now consider what the physical interpretation of this result is on the AHL$(\nu)$ cluster itself.  We will describe this using the notion of gap paths. The explicit definition of gap paths is provided in \cite{NT}, however, intuitively the gap path from a point $z\in\mathbb{C}$ represents the shortest path from $z$ to outside the boundary of the cluster. This is demonstrated in Figure \ref{fig6} with particles represented as disks. We consider the point $z$ and imagine a piece of string attached at $z$ and pulled tight vertically until we leave the boundary of cluster. This represents the gap path of the point $z$ and is indicated by the red line in Figure \ref{fig6}. Note that the gap paths are dependent on the number of particles $n$ attached to the cluster. The gap paths cannot intersect the particles unless $z$ initially is contained inside a particle in which case we choose the shortest path to leave the particle we are contained in and then proceed as above. It is shown in \cite{NT} that in the limit as $c\to 0$ the trajectories of the gap paths are described by the harmonic measure flow, under a deterministic transformation.

\begin{figure}[!htb]
\minipage{0.45\textwidth}
  \includegraphics[width=\linewidth]{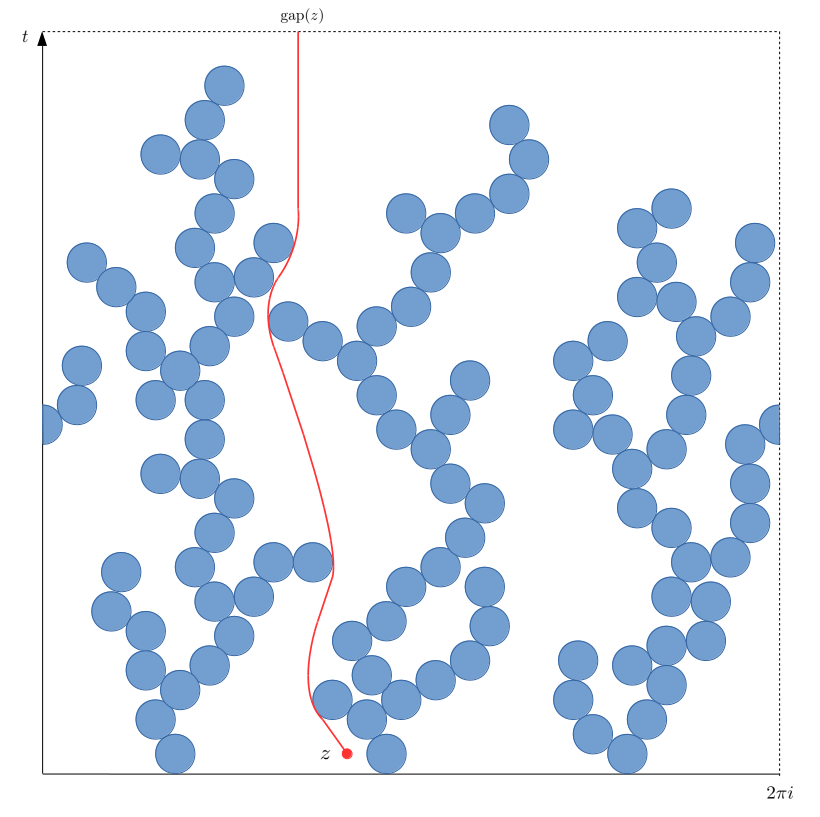}
  \caption{An example of a gap path}\label{fig6}
\endminipage\hfill
\minipage{0.45\textwidth}
  \includegraphics[width=\linewidth]{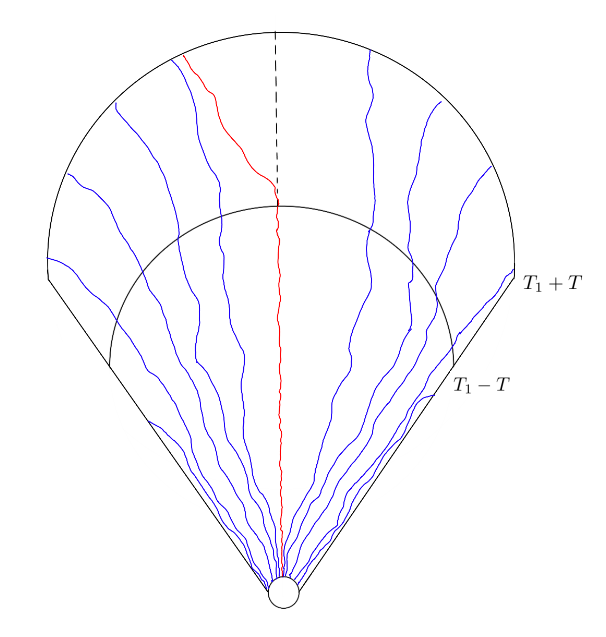}
\caption{An example of a possible AHL$(\nu)$ \newline cluster}\label{fig1}
\endminipage
 \begin{center}
 
 \end{center}
\end{figure}

With the notion of gap paths in mind we can describe how our result can be used to obtain a physical interpretation the behaviour of the cluster on longer time-scales. We use the cartoon in Figure \ref{fig1} to illustrate the physical interpretation obtained by applying our result to the example from Figure \ref{fig3}.
The harmonic measure flow allows us to map the ancestry of each the particles on the boundary of the growing cluster to an origin on the boundary of the unit disk. In Figure \ref{fig1}, the unstable point is at the centre of the arc on the unit disk and the stable points on either edge. Consider the gap path of a point near the origin. On compact time intervals we expect particles attached away from the stable points to have ancestors attached near the unstable point and thus, as the gap path cannot intersect the particles, we would expect the gap path of a particle near the origin to be vertical. However, as we enter the critical time window the harmonic measure flow is no longer close to solution of the ODE started from the unstable point but instead follows a trajectory started at a macroscopic distance from the unstable point. Therefore, the successive particles are not attached vertically and the gap path becomes asymmetric as indicated by the red path in Figure \ref{fig1}. The direction the gap path follows is  dependent on the sign of $Z_{\infty}(a_u)$. Therefore, a physical consequence of our results applied to this example is that on bounded time-scales the ancestral process remains symmetric around the vertical axis whereas, as we enter the critical time window, the process becomes asymmetric.

The outline of the paper is as follows. In Section \ref{sec:harm} we define the harmonic measure flow and provide estimates that will be used in the remainder of the paper. In Section \ref{sec:logconv} we show that the harmonic measure flow $X_{n(t)}$ remains close to a deterministic ODE up to a logarithmic time. In Section \ref{sec:fluc} we characterise the distribution of the fluctuations. In Section \ref{sec:crit} we prove the existence of a critical logarithmic time window and show that within this interval the harmonic measure flow, started from the unstable point follows a stochastic path away from the unstable trajectory and towards a stable trajectory. Finally, in Section \ref{sec:convtraj}, we prove the main result about the behaviour of the entire trajectory of the harmonic measure flow starting from the unstable point. The original paper \cite{JTS} developed several techniques which have subsequently been successfully used to analyse more physical random growth models. We anticipate that the techniques developed in this paper, and in particular in Section \ref{sec:crit}, will be similarly useful in the study of random growth models which exhibit anisotropic behaviour, as well as more general situations in which one wishes to study long-time behaviour of stochastic processes.


\section{Harmonic measure flow}
\label{sec:harm}

The main aim of this section is to give a precise definition of the harmonic measure flow, and state some estimates which will be used in the remainder of the paper. Throughout, the model under consideration is the AHL$(\nu)$ model and the notation used is as defined in the introduction.

For a point $x\in (-1/2,1/2]$, set 
$$\gamma(x)=\frac{1}{2\pi i}\log \left (f_{(c)}^{-1}(e^{2\pi i x}) \right),$$
choosing the branch of logarithm which results in $x=\frac{1}{2}$ being fixed. As $f_{(c)}(z)$ is not one-to-one when $|z|=1$, we treat the two sides of the slit as distinct, when interpreting the inverse.
Explicitly, 
$$\gamma(x)=\frac{\mbox{sign}(x)}{\pi}\tan^{-1}\left(\sqrt{e^{c}\tan^2(\pi x)+e^{c}-1}\right).$$
We extend this definition to the real line as follows, if $x=k+a$ where $a\in (-1/2,1/2]$ then define $\gamma(x)=k+\gamma(a)$. Let $\tilde{\gamma}(x)=\gamma(x)-x$ and observe that this is a periodic function with period 1.

Given the sequence $\theta_n$ of i.i.d.~random variables with distribution $\nu$, set 
$$\gamma_n(x)=\gamma_n(x)=\gamma(x-\theta_n)+\theta_n.$$
It immediately follows that $\gamma_n(x)=\frac{1}{2\pi i}\log (f_n^{-1}(e^{2\pi i x}))$, interpreting the inverse and branch of logarithm appropriately. This function describes the change in angle of a point $x$ on the boundary under the transformation $f_n(x)$ and thus $\gamma_n(x)$ tells us how the harmonic measure evolves under the map $f_n(x)$ in the sense described in the introduction. Now define the discrete harmonic measure flow under the map $\phi_n$ for $x\in \mathbb{R}$ recursively by $$X_n(x)=\gamma_n(X_{n-1}(x))$$  
with $X_0(x)=x$. 
Thus if $\Gamma_n(x)=\phi_{n}^{-1}(x)=f_n^{-1}\circ \cdots \circ f_1^{-1}(x)$ then by straightforward induction 
\begin{align*}
X_n(x)
&=\frac{1}{2\pi i}\log \left (\Gamma_n(e^{2\pi i x}) \right ),
\end{align*}
as defined in the introduction.
Note that we define $X_n(x)$ in the recursive way above to ensure the correct interpretation of $\Gamma_n$ on the boundary of the cluster. Observe that 
\begin{equation*} 
X_n(x)=X_{n-1}(x)+\tilde{\gamma}(X_{n-1}(x)-\theta_n).
\end{equation*}
We can then rewrite the harmonic measure flow as
\begin{equation*} X_n(x)=x+\sum_{i=1}^n\tilde{\gamma}(X_{i-1}(x)-\theta_i).
\end{equation*}

Throughout the remainder of the paper we will rely on the following estimates taken from \cite{JTS}. In the case of slit particles, the proof can be done by elementary computation, so is not replicated here.

\begin{lemma}\label{2.2}
For $\tilde{\gamma}$ defined as above, $$\int_0^1\tilde{\gamma}(z) dz=0$$  and there exists a constant $\rho_0$ such that 
$$c^{-\frac{3}{2}}\int_0^1\tilde{\gamma}(z)^2 dz\to \rho_0 $$ 
as $c\to0$. Furthermore, there exists a constant $\delta>0$ such that the following estimates hold,
$$\|\tilde{\gamma}\|_{\infty}\leq \delta \sqrt{c},$$
and
\begin{align*}
\left|\int_0^1\tilde{\gamma}(x-\theta)^2 h_{\nu}(\theta)d\theta-\rho_0c^{\frac{3}{2}}h_{\nu}(x)\right| &\leq \delta\|h_{\nu}'\|_{\infty}c^2\log (c^{-1})
\end{align*}
for $c$ sufficiently small.
\end{lemma}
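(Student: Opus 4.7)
The plan is to base everything on the single exact identity
\[
\cos(\pi \gamma(x)) = e^{-c/2}\cos(\pi x), \qquad x \in (0, 1/2],
\]
which follows by adding $1$ to both sides of the rearrangement $\tan^2(\pi\gamma(x)) = e^c \tan^2(\pi x) + e^c - 1$ of the explicit formula and using $1+\tan^2 = \sec^2$. The explicit formula also gives $\gamma(-x) = -\gamma(x)$, so $\tilde{\gamma}$ is odd on $(-1/2, 1/2]$ and $\int_0^1 \tilde{\gamma}(z)\,dz = \int_{-1/2}^{1/2}\tilde{\gamma}(z)\,dz = 0$, settling the first claim. Differentiating the identity yields $\gamma'(x) = e^{-c/2}\sin(\pi x)/\sin(\pi\gamma(x))$; since $\gamma$ maps $(0,1/2]$ into itself with $\gamma(x) \geq x$, and $\sin$ is increasing there, one has $\gamma' \leq e^{-c/2} < 1$. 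Hence $\tilde{\gamma}$ is decreasing on $(0, 1/2]$, so its maximum is $\tilde{\gamma}(0^+) = \tfrac{1}{\pi}\cos^{-1}(e^{-c/2}) = \sqrt{c}/\pi + O(c^{3/2})$, establishing $\|\tilde\gamma\|_\infty \leq \delta\sqrt{c}$.

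For the scaling limit of $\int\tilde\gamma^2$, I would substitute $x = \sqrt{c}\,u$, the natural scale given that $\tilde\gamma$ is concentrated near $0$ at scale $\sqrt{c}$. Taylor-expanding the identity gives $e^{-c/2}\cos(\pi\sqrt{c}\,u) = 1 - \tfrac{c}{2}(1 + \pi^2 u^2) + O(c^2(1+u^4))$, and $\cos^{-1}(1-y) = \sqrt{2y}(1 + O(y))$ then yields
\[
\tilde{\gamma}(\sqrt{c}\,u) = \tfrac{\sqrt{c}}{\pi}\bigl(\sqrt{1+\pi^2 u^2} - \pi u\bigr) + O(c^{3/2}(1+u^2)), \qquad u \in [0, 1/(2\sqrt{c})].
\]
Substituting and using that $(\sqrt{1+\pi^2 u^2} - \pi u)^2 \sim 1/(4\pi^2 u^2)$ at infinity (hence integrable on $[0,\infty)$) yields
\[
c^{-3/2}\int_0^1 \tilde\gamma(z)^2\,dz \;\longrightarrow\; \rho_0 := \frac{2}{\pi^2}\int_0^\infty\bigl(\sqrt{1+\pi^2 u^2}-\pi u\bigr)^2\,du.
\]

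For the last estimate, substituting $z = x-\theta$ and using periodicity,
\[
\int_0^1 \tilde\gamma(x-\theta)^2 h_\nu(\theta)\,d\theta = h_\nu(x)\int_{-1/2}^{1/2}\tilde\gamma(z)^2\,dz + \int_{-1/2}^{1/2}\tilde\gamma(z)^2\bigl(h_\nu(x-z)-h_\nu(x)\bigr)\,dz.
\]
The first summand is $\rho_0 c^{3/2} h_\nu(x)$ up to the error from the previous step. The second is bounded by $\|h_\nu'\|_\infty \int_{-1/2}^{1/2}|z|\tilde\gamma(z)^2\,dz$; splitting this integral at $|z| = \sqrt{c}$ and using $\|\tilde\gamma\|_\infty = O(\sqrt{c})$ on the inner region, together with the outer asymptotic $\tilde\gamma(z) \sim \tfrac{c}{2\pi}\cot(\pi z)$ (obtained by expanding the identity for $z$ bounded away from $0$), produces an $O(c^2\log c^{-1})$ bound, with the logarithm coming from $\int_{\sqrt{c}}^{1/2} z^{-1}\,dz$. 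The main obstacle is ensuring the error in the approximation of $\int\tilde\gamma^2$ itself is at most $O(c^2\log c^{-1})$, which requires a careful quantitative matching between the inner expansion (valid for $u$ bounded) and the outer cotangent asymptotic; this is the most delicate part, but it is only a quantitative refinement of the scaling argument already carried out.
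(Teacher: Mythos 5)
The paper does not actually prove this lemma: it states the estimates, attributes them to \cite{JTS}, and says only that ``in the case of slit particles, the proof can be done by elementary computation, so is not replicated here.'' Your proposal is therefore supplying the computation the paper omits, and the route you take is sound. The core observation --- squaring the displayed formula for $\gamma$ and adding $1$ to obtain the exact identity $\cos(\pi\gamma(x)) = e^{-c/2}\cos(\pi x)$ --- is the right simplifying move: it makes oddness, the uniform bound, the inner scaling at the $\sqrt{c}$ scale, and the outer asymptotic $\tilde\gamma(z)\sim\frac{c}{2\pi}\cot(\pi z)$ all follow by direct expansion, and the dominated-convergence step for $c^{-3/2}\int\tilde\gamma^2\to\rho_0$ closes cleanly once you supplement your uniform $\|\tilde\gamma\|_\infty=O(\sqrt c)$ bound with the tail bound $\tilde\gamma(z)\leq\frac{\pi c}{4}\cot(\pi z)$ (so $c^{-1}\tilde\gamma(\sqrt c u)^2$ is dominated by $\min(\delta^2,\,\delta^2/u^2)$).

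The one genuine gap is the one you flag yourself: the final inequality requires not merely $c^{-3/2}\int\tilde\gamma^2\to\rho_0$ but the quantitative statement $\left|\int_0^1\tilde\gamma(z)^2\,dz - \rho_0 c^{3/2}\right| = O(c^2\log c^{-1})$, and your write-up stops at asserting this ``requires a careful quantitative matching.'' That matching is not automatic from what you have written: the inner expansion error $O(c^{3/2}(1+u^2))$ degrades for $u$ of order $c^{-1/4}$, so one must split the integral at an intermediate scale (for example $|z|\asymp c^{1/4}$), use the inner expansion below it with an explicit error, and the $\cot$ asymptotic above it, then verify the two pieces and the tail each contribute at most $O(c^2\log c^{-1})$. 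Until that accounting is done, the last displayed inequality of the lemma is not established. (A minor point worth noting: as stated, the right-hand side vanishes when $h_\nu$ is constant, which would force exact equality $\int\tilde\gamma^2=\rho_0 c^{3/2}$; this is an artefact of how the bound is written in the paper, as $\nu$ is assumed non-uniform throughout, so $\|h_\nu'\|_\infty>0$ and the constant $\delta$ can absorb the $c^2\log c^{-1}$ error from the $\int\tilde\gamma^2$ approximation.)
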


\section{Convergence on logarithmic time-scales}
\label{sec:logconv}

Recall the definition of $\psi_t(x)$ from \eqref{ode1} where $b$ is defined in \eqref{bequation}.  In this section we show that the harmonic measure flow $X_{n(t)}(x)$ is uniformly close to $\psi_t(x)$ as $c \to 0$ on time-intervals which grow logarithmically in $c$. This extends a result in \cite{JTS} which shows that this holds over compact time-intervals.

During calculations, for simplicity purposes, we will often treat $n(t)c$ as $t$. As the true difference is of order $c$ and we take the limit as $c\to0$, our results will be unchanged. 

Define the drift function
$$\beta_{\nu}(x)=\int_0^1 \tilde{\gamma}(x-z)h_{\nu}(z)dz.$$ 
Now 
\[
\mathbb{E}\left(\tilde{\gamma}(X_{i-1}(x)-\theta_i)|\mathcal{F}_{i-1}\right) = \beta_\nu(X_{i-1}(x))
\]
where $\mathcal{F}_{n}$ is the $\sigma$-algebra generated by the set $\{\theta_i \; : 1\leq i\leq n \}$. 

Set $$Y_n(x)=\tilde{\gamma}(X_{n-1}(x)-\theta_n)-\beta_{\nu}(X_{n-1}(x)).$$
Then $S_n(x)=\sum_{i=1}^n Y_i(x)$ is a martingale with respect to $\mathcal{F}_{n}$. We can write  
\begin{equation}\label{Xform}
X_n(x)=x+S_n(x)+\sum_{i=1}^n\beta_{\nu}(X_{i-1}(x)),
\end{equation}
so 
\begin{align*}
X_{n(t)}(x)-\psi_t(x)=&S_{n(t)}(x)+\sum_{i=1}^{n(t)}\beta_{\nu}(X_{i-1}(x))-\int_{0}^{t}b(\psi_s(x))ds\\
= & S_{n(t)}(x) + \sum_{i=1}^{n(t)} \left ( \beta_{\nu}(X_{i-1}(x)) - cb(X_{i-1}(x)) \right ) + c \sum_{i=1}^{n(t)} b(X_{i-1}(x)) -\int_{0}^{t}b(\psi_s(x))ds .
\end{align*}

The proof of Proposition 2 in \cite{JTS} provides the following bound.
\begin{lemma}\label{2.4}
For each $x$ and $c<\frac{1}{2}$, there exists a constant $\delta>0$ such that,
$$\left|\beta_{\nu}(x)- c b(x)\right|<\delta c^{\frac{3}{2}}\log(c^{-1}).$$
Therefore
there exists a constant $\delta>0$ such that 
\begin{align*}
|\beta_{\nu}(x)|\leq \delta c
\end{align*}
and for each $n$,
$$|Y_n(x)|\leq \delta\sqrt{c}.$$
\end{lemma}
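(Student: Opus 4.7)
The plan is to show the first bound by recasting both $\beta_\nu(x)$ and $cb(x)$ as integrals of $h_\nu(x-z)-h_\nu(x)$ against slightly different kernels, and then bounding the difference of kernels via a Taylor expansion of $\tilde\gamma$ in $c$. First I would substitute $z\mapsto x-z$ in the definition of $\beta_\nu$ and use periodicity to write $\beta_\nu(x)=\int_0^1\tilde\gamma(z)h_\nu(x-z)\,dz$. Invoking $\int_0^1\tilde\gamma(z)\,dz=0$ from Lemma \ref{2.2} then gives
\[
\beta_\nu(x) - cb(x) = \int_0^1\Big(\tilde\gamma(z)-\tfrac{c}{2\pi}\cot(\pi z)\Big)\big(h_\nu(x-z)-h_\nu(x)\big)\,dz,
\]
since $cb(x)$ is already presented in this zero-mean-kernel form. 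The advantage of this expression is that the smoothness of $h_\nu$ yields the Lipschitz bound $|h_\nu(x-z)-h_\nu(x)|\leq\|h_\nu'\|_\infty|z|$, turning the $\cot$ singularity at $z=0$ into an integrable one and matching the size of $\tilde\gamma(z)$ near $0$.

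The heart of the argument is a Taylor expansion of $\gamma$ in $c$. Starting from the explicit formula $\gamma(z)=\tfrac{\mathrm{sign}(z)}{\pi}\tan^{-1}\bigl(\sqrt{e^c\tan^2(\pi z)+e^c-1}\bigr)$, expanding $e^c$ and then $\tan^{-1}$ to first order in $c$ gives $\tilde\gamma(z)=\tfrac{c}{2\pi}\cot(\pi z)+R(z,c)$. The remainder $R(z,c)$ admits two complementary bounds: a crude uniform bound of order $\sqrt c$ near the singularity, coming from $\|\tilde\gamma\|_\infty\leq\delta\sqrt c$ together with $|\cot(\pi z)|\leq C/|z|$; and, for $|z|$ bounded away from integers, a sharp bound of order $c^2/|\sin(\pi z)|^3$ from the next-order term in the expansion. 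I would split the integral at $|z|=\sqrt c$. On $|z|\leq\sqrt c$, combining the crude bound with the Lipschitz estimate $|h_\nu(x-z)-h_\nu(x)|\leq\|h_\nu'\|_\infty|z|$ produces an integrand of size $\sqrt c\cdot|z|$, contributing at most $O(c^{3/2})$ after integration; on $|z|>\sqrt c$ the sharp bound contributes $\int_{\sqrt c}^{1/2} c^2/|z|^2\,dz=O(c^{3/2})$. The anticipated main obstacle, and the natural source of the $\log(c^{-1})$ factor in the stated bound, is the transitional region $|z|\sim\sqrt c$ where neither bound on $R$ is sharp: patching the two estimates cleanly, perhaps via a further decomposition or by applying a second-order Taylor expansion of $h_\nu$, produces a dyadic-type sum that accumulates a factor of $\log(c^{-1})$.

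The remaining two estimates are then straightforward corollaries. Since $h_\nu\in C^2$, $b$ is uniformly bounded, so the first estimate yields $|\beta_\nu(x)|\leq c|b(x)|+\delta c^{3/2}\log(c^{-1})\leq\delta'c$ for $c$ sufficiently small. Finally, $|Y_n(x)|\leq|\tilde\gamma(X_{n-1}(x)-\theta_n)|+|\beta_\nu(X_{n-1}(x))|\leq\delta\sqrt c+\delta' c$, which is at most a constant multiple of $\sqrt c$ when $c<1/2$. The only genuinely delicate step is the first one; the others reduce to invoking the uniform bound in Lemma \ref{2.2} together with the newly obtained comparison between $\beta_\nu$ and $cb$.
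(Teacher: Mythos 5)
The paper does not reprove the first inequality; it imports it directly from Proposition 2 of the reference \cite{JTS}, while the second and third bounds follow by the same short deductions you give. Your direct argument is nevertheless in the same spirit as the underlying proof in \cite{JTS} — both split the integral at the scale $|z|\sim\sqrt{c}$ — but your preliminary reduction is cleaner: by subtracting $h_\nu(x)$ you put $\beta_\nu(x)$ and $cb(x)$ on an equal footing as integrals of $h_\nu(x-z)-h_\nu(x)$ against the kernels $\tilde\gamma$ and $\tfrac{c}{2\pi}\cot(\pi z)$, so the whole problem becomes a pointwise comparison of those two kernels weighted by the Lipschitz factor $|z|$. The Taylor identity $\partial_c\tilde\gamma|_{c=0}=\tfrac{1}{2\pi}\cot(\pi z)$ is correct (one can check $\partial_c\tan^{-1}(v)|_{c=0}=\tfrac{1}{2u}$ with $u=\tan(\pi z)$), and your two regimes do each contribute $O(c^{3/2})$, so the mechanism is sound.

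The one genuine gap is that the ``sharp'' remainder bound $|R(z,c)|\lesssim c^2/|\sin(\pi z)|^3$ on $|z|>\sqrt{c}$ is asserted, not established. To make it rigorous you need a bound on $\partial_c^2\tilde\gamma(z)$ that is uniform over the intermediate point $c^*\in(0,c)$ appearing in the Lagrange form of the remainder, not merely the value of $\partial_c^2\tilde\gamma|_{c=0}$. This is where the restriction $|z|>\sqrt{c}$ is actually used: for such $z$ one has $u^2\gtrsim c\geq c^*$, so $v^2=e^{c^*}u^2+e^{c^*}-1$ is comparable to $u^2$ and the derivatives $v',v''$ stay of the expected size $O((u^2+1)/u)$ and $O((u^2+1)^2/u^3)$, yielding $|\partial_c^2\tan^{-1}(v)|\lesssim (u^2+1)/u^3\lesssim 1/|\sin(\pi z)|^3$; for $|z|\leq\sqrt{c}$ this uniformity fails (e.g.\ $v\approx\sqrt{c}$, $v'\approx c^{-1/2}/2$) and one must fall back on the crude bound, which is exactly what your split does. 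I would also flag a small arithmetic slip: on $|z|\leq\sqrt{c}$ the dominant term in $|R(z,c)|\cdot|z|$ is $Cc$ from $\tfrac{c}{2\pi}|\cot(\pi z)|\cdot|z|$, not $\sqrt{c}\,|z|$; the integral is still $O(c^{3/2})$, so the conclusion survives. Finally, your worry about losing a $\log(c^{-1})$ at the crossover scale appears to be misplaced: your two estimates already sum to $O(c^{3/2})$, which is actually slightly \emph{sharper} than the stated $O(c^{3/2}\log(c^{-1}))$; the logarithm in the lemma reflects the looser bookkeeping in \cite{JTS} (and the need for the result to hold for more general particle shapes than slits), not an obstruction intrinsic to slit particles.
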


Throughout the remainder of the paper we will assume $0<c<\frac{1}{2}$. As $h_{\nu}$ is twice continuously differentiable, by properties of Hilbert transforms it follows that $b(x)$ is also twice continuously differentiable. Furthermore, as $h_\nu$ is not the uniform measure, $b$ is not constant or equivalently $b'$ is not identically zero.
It immediately follows that
\[
\left | X_{n(t)}(x)-\psi_t(x) \right |
\leq |S_{n(t)}(x)| + \delta\log(c^{-1})  c^{\frac{3}{2}} n(t)+\|b'\|_{\infty} \int_{0}^{t} |X_{n(r)}(x)-\psi_r(x)|dr.
\]

In order to bound $|S_n(t)|$, we apply the following martingale inequality from \cite{F}. 
\begin{theorem}\label{freed}
Suppose $Y_{k}$ is $\mathcal{F}_k$-measurable and $\mathbb{E} (Y_{k} \; | \; \mathcal{F}_{k-1} )=0$. Set $S_n=\sum_{k=1}^n Y_k$, let $M$ be a positive real number and let $T_n(z)=\sum_{k=1}^{n}\mathbb{E} (Y_{k}(x)^2 \; | \; \mathcal{F}_{k-1} )$. Suppose $\mathbb{P} ( |Y_{k}| \leq M \; \text{for all } k \leq n )=1$. Then for all positive numbers $\epsilon$ and $b$,
$$\mathbb{P} ( S_n \geq \epsilon \; \; \text{and} \; \; T_n(z)\leq b \; \text{for some } n>0 ) \leq \exp\left[\frac{-\epsilon^2}{2(M\epsilon+b)}\right].$$
\end{theorem}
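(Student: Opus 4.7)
The plan is to prove this via the standard exponential supermartingale approach that underlies Bernstein-type concentration inequalities for martingales with bounded increments. The first step is to introduce, for $\lambda \geq 0$, the process
\[
Z_n(\lambda) = \exp\bigl(\lambda S_n - h(\lambda) T_n(z)\bigr), \qquad h(\lambda) = \frac{e^{\lambda M} - 1 - \lambda M}{M^2},
\]
with $Z_0(\lambda) = 1$. The key elementary estimate is a one-step bound: if $Y$ is $\mathcal{F}_k$-measurable with $|Y| \leq M$ and $\mathbb{E}[Y \mid \mathcal{F}_{k-1}] = 0$, then
\[
\mathbb{E}[e^{\lambda Y} \mid \mathcal{F}_{k-1}] \leq \exp\bigl(h(\lambda) \, \mathbb{E}[Y^2 \mid \mathcal{F}_{k-1}]\bigr).
\]
This follows from expanding $e^{\lambda y} - 1 - \lambda y = \sum_{k \geq 2} (\lambda y)^k/k!$, using the elementary bound $|y|^k \leq M^{k-2} y^2$ valid for $|y| \leq M$ and $k \geq 2$, and finally $1 + x \leq e^x$. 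Applying this conditionally at each step shows that $Z_n(\lambda)$ is a non-negative supermartingale.

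Next I would apply optional stopping. Fix $\lambda > 0$ and let $\tau = \inf\{n \geq 1 : S_n \geq \epsilon\}$, with $\tau = \infty$ if the event never occurs. Because $Z_n(\lambda)$ is a non-negative supermartingale, $\mathbb{E}[Z_{\tau \wedge n}(\lambda)] \leq 1$. On the event $\{\tau \leq n\} \cap \{T_\tau(z) \leq b\}$, the process satisfies $Z_{\tau \wedge n}(\lambda) \geq \exp(\lambda \epsilon - h(\lambda) b)$ since $T$ is non-decreasing. Markov's inequality therefore gives
\[
\mathbb{P}\bigl(\tau \leq n,\; T_\tau(z) \leq b\bigr) \leq \exp\bigl(-\lambda \epsilon + h(\lambda) b\bigr),
\]
and letting $n \to \infty$, with the help of monotone convergence on the event, yields the same bound for the probability that $S_n \geq \epsilon$ and $T_n(z) \leq b$ for some $n > 0$.

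The remaining step is to optimize over $\lambda$ so as to recover the exact form $\exp(-\epsilon^2/(2(M\epsilon + b)))$. I would take the Bernstein choice $\lambda = \epsilon/(b + M\epsilon)$, so that $u := \lambda M = M\epsilon/(M\epsilon + b) \in [0,1)$. The exponent then becomes
\[
-\lambda \epsilon + h(\lambda) b = -\frac{\epsilon^2}{M\epsilon + b} + \frac{(e^u - 1 - u)\, b}{M^2}.
\]
Rewriting $M\epsilon = ub/(1-u)$ and $M\epsilon + b = b/(1-u)$ reduces the desired inequality to the elementary Bernstein bound $e^u - 1 - u \leq u^2/(2(1-u))$, valid for $0 \leq u < 1$, which is a standard Taylor expansion comparison. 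Substituting back produces the claimed tail estimate.

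The main obstacle is conceptual rather than computational: one has to guess both the right exponential supermartingale and the correct rate function $h(\lambda)$, and then identify the non-obvious value of $\lambda$ that optimizes the tail. Once the exponential supermartingale is in hand, the rest is optional stopping plus the algebraic rearrangement $M\epsilon = ub/(1-u)$, which is exactly what produces the denominator $2(M\epsilon + b)$ rather than a weaker one.
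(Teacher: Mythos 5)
The paper does not actually prove this statement; it cites it verbatim from Freedman's 1975 paper \cite{F} and uses it as a black box (see the sentence ``we apply the following martingale inequality from \cite{F}'' immediately preceding the theorem). So there is no ``paper proof'' to compare against, and the relevant question is whether your reconstruction is sound.

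It is. You have independently reproduced what is essentially Freedman's own argument: form the exponential supermartingale $Z_n(\lambda)=\exp\bigl(\lambda S_n - h(\lambda)T_n\bigr)$ with the Bennett rate function $h(\lambda)=(e^{\lambda M}-1-\lambda M)/M^2$, verify the one-step bound $\mathbb{E}[e^{\lambda Y}\mid\mathcal{F}_{k-1}]\le \exp\bigl(h(\lambda)\,\mathbb{E}[Y^2\mid\mathcal{F}_{k-1}]\bigr)$ using $|Y|\le M$, apply optional stopping at $\tau=\inf\{n:S_n\ge\epsilon\}$, and then optimize over $\lambda$. Two points are worth spelling out slightly more carefully than in your sketch. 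First, the termwise Taylor bound: for negative $y$ and odd $k$ the terms $(\lambda y)^k/k!$ are negative, so the inequality $|y|^k\le M^{k-2}y^2$ does not apply term by term with signs; the clean fix is to bound $e^{\lambda y}-1-\lambda y=\bigl|\sum_{k\ge2}(\lambda y)^k/k!\bigr|\le\sum_{k\ge2}|\lambda y|^k/k!\le h(\lambda)y^2$, using that $e^{\lambda y}-1-\lambda y\ge0$ by convexity (or, equivalently, the monotonicity of $u\mapsto(e^u-1-u)/u^2$). Second, it is worth making explicit \emph{why} the single stopping time $\tau$ suffices to capture the ``for some $n>0$'' event: because $T_n$ is non-decreasing, $\{\exists\,n:\ S_n\ge\epsilon,\ T_n\le b\}$ is exactly $\{\tau<\infty,\ T_\tau\le b\}$, and on that event $Z_\tau\ge\exp(\lambda\epsilon-h(\lambda)b)$. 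With the Bernstein choice $\lambda=\epsilon/(b+M\epsilon)$, writing $u=\lambda M$ and $M\epsilon+b=b/(1-u)$, the exponent reduces to $\tfrac{b}{M^2}\bigl(e^u-1-u-\tfrac{u^2}{1-u}\bigr)$, and the elementary inequality $e^u-1-u\le u^2/(2(1-u))$ (from $k!\ge2$ for $k\ge2$) yields precisely the claimed bound $\exp\bigl(-\epsilon^2/(2(M\epsilon+b))\bigr)$. So the proposal is correct and achieves the exact constant; the only changes I would make are the two clarifications above.
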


We obtain the following result.
\begin{lemma}\label{3.5}
There exists a constant $\delta_0 >0$ such that, for any $T_0 > 0$ and $0 < \epsilon <  T_0$,
$$ \mathbb{P}\left(\sup_{0\leq t \leq T_0} |S_{n(t)}(x)|>\epsilon\right)\leq \exp \left(\frac{-\epsilon^2}{\delta_0 T_0\sqrt{c}}\right).$$
\end{lemma}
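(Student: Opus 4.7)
The plan is to apply Freedman's martingale inequality (Theorem \ref{freed}) to the martingale $S_n(x) = \sum_{i=1}^n Y_i(x)$, so the work reduces to verifying the hypotheses with the correct scales in $c$.

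First I would verify the almost-sure bound on the increments: Lemma \ref{2.4} gives $|Y_n(x)| \leq \delta \sqrt{c}$, so one can take $M = \delta \sqrt{c}$ in Freedman. Next I would bound the predictable quadratic variation
\[
T_n(x) = \sum_{k=1}^n \mathbb{E}(Y_k(x)^2 \mid \mathcal{F}_{k-1}).
\]
Since $Y_k(x) = \tilde{\gamma}(X_{k-1}(x)-\theta_k) - \beta_\nu(X_{k-1}(x))$ and $\theta_k$ has density $h_\nu$ independent of $\mathcal{F}_{k-1}$, the conditional variance is at most $\int_0^1 \tilde{\gamma}(X_{k-1}(x)-\theta)^2 h_\nu(\theta)\,d\theta$. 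By Lemma \ref{2.2}, this quantity is at most $\rho_0 c^{3/2} \|h_\nu\|_\infty + \delta \|h_\nu'\|_\infty c^2 \log(c^{-1})$, which for $c$ sufficiently small is bounded by $\delta_1 c^{3/2}$ for some universal constant $\delta_1$. Summing over $k \leq n(T_0) \leq T_0/c$ gives $T_{n(T_0)}(x) \leq \delta_1 T_0 \sqrt{c}$, so the choice $b = \delta_1 T_0 \sqrt{c}$ satisfies $T_n(x) \leq b$ for all $n \leq n(T_0)$.

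Because $T_n(x)$ is nondecreasing in $n$, the event $\{\sup_{0 \leq t \leq T_0} S_{n(t)}(x) \geq \epsilon\}$ is contained in $\{S_n(x) \geq \epsilon \text{ and } T_n(x) \leq b \text{ for some } n \geq 1\}$, so Freedman's inequality yields
\[
\mathbb{P}\!\left(\sup_{0 \leq t \leq T_0} S_{n(t)}(x) \geq \epsilon\right) \leq \exp\!\left(\frac{-\epsilon^2}{2(M\epsilon + b)}\right) = \exp\!\left(\frac{-\epsilon^2}{2\delta\sqrt{c}\,\epsilon + 2\delta_1 T_0 \sqrt{c}}\right).
\]
Using the hypothesis $\epsilon < T_0$, the denominator is bounded by $2(\delta + \delta_1) T_0 \sqrt{c}$, giving the stated bound with $\delta_0 = 2(\delta + \delta_1)$ after a union bound with the analogous estimate applied to $-S_{n(t)}(x)$ to convert the one-sided tail into a bound on $|S_{n(t)}(x)|$ (possibly enlarging $\delta_0$ by an additive $\log 2$ factor absorbed into the constant).

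There is no single hard step: the argument is essentially bookkeeping of the scales $c^{1/2}$ and $c^{3/2}$ coming from Lemmas \ref{2.2} and \ref{2.4}. The one point requiring care is ensuring that the variance bound from Lemma \ref{2.2} is applied at the random point $X_{k-1}(x)$, which is fine because the bound there is pointwise in the spatial variable and the randomness enters only through $\theta_k$, which is independent of $\mathcal{F}_{k-1}$.
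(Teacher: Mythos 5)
Your proof is correct and follows essentially the same route as the paper: reduce to Freedman's inequality with $M = \delta\sqrt{c}$ from Lemma \ref{2.4}, bound the conditional quadratic variation by $\delta_1 c^{3/2}$ per step using Lemma \ref{2.2}, sum to get $b = \delta_1 T_0 \sqrt{c}$, and then use $\epsilon < T_0$ to simplify the denominator. You are in fact slightly more careful than the paper in flagging that Freedman as stated is one-sided and a union bound over $\pm S_{n(t)}$ is needed to pass to $|S_{n(t)}|$ (the paper silently states the two-sided bound); the resulting factor of $2$ can indeed be absorbed by enlarging $\delta_0$ whenever the exponent is at least $\log 2$, which is the only regime where the bound is nontrivial, and this is also the regime used in Theorem \ref{3.6II}.
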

\begin{proof}
We know $Y_i(x)$ is a martingale difference array. Therefore, in order to apply Theorem \ref{freed} we need to find a bound on $\sum_{k=1}^{n}\mathbb{E} (Y_{k}(x)^2 \; | \; \mathcal{F}_{k-1} )$. 
By the definition of $\beta_{\nu}$,
\begin{align*}
\mathbb{E}\left(Y_i(x)^2|\mathcal{F}_{i-1}\right)=&\int_0^1\tilde{\gamma}_P(X_{i-1}(x)-\theta)^2 h_{\nu}(\theta)d\theta-\beta_{\nu}(X_{i-1}(x))^2.
\intertext{Therefore,}
\mathbb{E}\left(Y_i(x)^2|\mathcal{F}_{i-1}\right)=&\rho_0c^{\frac{3}{2}}h_{\nu}(X_{i-1}(x))-\beta_{\nu}(X_{i-1}(x))^2\\
+&\left(\int_0^1\tilde{\gamma}_P(X_{i-1}(x)-\theta)^2 h_{\nu}(\theta)d\theta-\rho_0c^{\frac{3}{2}}h_{\nu}(X_{i-1}(x))\right).
\end{align*}
From the bounds in Lemmas \ref{2.2} and \ref{2.4}, there exists a constant $\delta>0$ such that $|\beta_{\nu}(X_{i-1}(x))|\leq \delta c$ and $\left|\int_0^1\tilde{\gamma}(x-\theta)^2 h_{\nu}(\theta)d\theta-\rho_0c^{\frac{3}{2}}h_{\nu}(x)\right|\leq  \delta c^2\log (c^{-1})$.  Therefore, there exists a constant $\delta > 0$ (possibly different to that above) such that,
$$\mathbb{E}\left(Y_i(x)^2|\mathcal{F}_{i-1}\right)<\delta c^{\frac{3}{2}}.$$
Thus,
$$\sum_{i=1}^{n(t)}\mathbb{E}\left(Y_i(x)^2|\mathcal{F}_{i-1}\right)\leq \delta c^{\frac{3}{2}}n(t).$$
Using the estimates provided in Lemma \ref{2.4}, for some constant $\delta >0$, $|Y_i(x)|\leq \delta \sqrt{c}$  for all $i\geq0$  and, if $0 \leq t \leq T_0$, $\sum_{i=1}^{n(t)}\mathbb{E}\left(Y_i(x)^2|\mathcal{F}_{i-1}\right)\leq \delta T_0c^{\frac{1}{2}}.$ Putting this all into Theorem \ref{freed},
$$ \mathbb{P}\left(\sup_{0\leq t \leq T_0}|S_{n(t)}(x)|>\epsilon\right)\leq \exp \left(\frac{-\epsilon^2}{2\delta (\sqrt{c}\epsilon +T_0\sqrt{c})}\right).$$
Therefore, if $0 < \epsilon< T_0$,
$$ \mathbb{P}\left(\sup_{0\leq t \leq T_0}|S_{n(t)}(x)|>\epsilon\right)\leq \exp \left(\frac{-\epsilon^2}{4\delta T_0\sqrt{c}}\right).$$ 
\end{proof}

We now prove the main result of this section, that there exists a logarithmic time, up to which $X_{n(t)}(x)$ is uniformly close to $\psi_t(x)$ for each $x \in \mathbb{R}$. 

\begin{theorem}\label{3.6II}
Let $$ T_0 =\frac{1}{4\|b'\|_{\infty}}\left(\log(c^{-1})-3\log(\log(c^{-1}))\right).$$ Then for any $\epsilon>0$,
$$\lim_{c\to 0} \mathbb{P}\left(\sup_{0\leq t\leq T_0} \left| X_{n(t)}(x)-\psi_t(x)\right|>\epsilon\right)=0.$$
\end{theorem}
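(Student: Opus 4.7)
The plan is to apply Grönwall's inequality to the recursive bound already established just before the statement, then check that the choice of $T_0$ is precisely the one for which both the deterministic error term and the martingale fluctuation term (controlled by Lemma \ref{3.5}) still vanish after being amplified by the Grönwall factor.

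First, I would set $L=\log(c^{-1})$ and $u(t)=|X_{n(t)}(x)-\psi_t(x)|$, and rewrite the inequality from the excerpt as
\[
u(t)\le \sup_{0\le s\le T_0}|S_{n(s)}(x)| + \delta L c^{3/2} n(T_0) + \|b'\|_\infty\int_0^t u(r)\,dr,\qquad 0\le t\le T_0.
\]
Grönwall's inequality then gives
\[
\sup_{0\le t\le T_0} u(t) \le \left(\sup_{0\le s\le T_0}|S_{n(s)}(x)| + \delta L c^{3/2} n(T_0)\right) e^{\|b'\|_\infty T_0}.
\]
The defining choice $T_0=\tfrac{1}{4\|b'\|_\infty}(L-3\log L)$ is exactly what makes $e^{\|b'\|_\infty T_0}=c^{-1/4}L^{-3/4}$, so everything reduces to checking that the two bracketed terms, multiplied by this factor, tend to $0$ (respectively in probability).

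For the deterministic contribution, $n(T_0)\le T_0/c\le L/(4\|b'\|_\infty c)$, so
\[
\delta L c^{3/2} n(T_0)\,e^{\|b'\|_\infty T_0} \le \delta' L^2 c^{1/2}\cdot c^{-1/4}L^{-3/4} = \delta' c^{1/4} L^{5/4} \xrightarrow[c\to 0]{} 0.
\]
For the martingale contribution, given $\epsilon>0$, I would apply Lemma \ref{3.5} with $\epsilon'=\tfrac{\epsilon}{2}c^{1/4}L^{3/4}$, so that $\epsilon'\,e^{\|b'\|_\infty T_0}=\epsilon/2$. For $c$ small, $\epsilon'<T_0$, so the lemma gives
\[
\mathbb{P}\!\left(\sup_{0\le t\le T_0}|S_{n(t)}(x)|>\epsilon'\right)\le \exp\!\left(\frac{-\epsilon'^2}{\delta_0 T_0\sqrt{c}}\right)
= \exp\!\left(-\frac{\epsilon^2 L^{3/2}}{4\delta_0 T_0}\right),
\]
and since $T_0\asymp L$, the exponent is of order $-L^{1/2}\to -\infty$. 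Combining the two bounds yields the result.

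The only delicate point is the matching of scales: the Grönwall amplification grows like $c^{-1/4}L^{-3/4}$, the deterministic error shrinks like $c^{1/2}L^2$, and the typical fluctuations of $S_{n(T_0)}$ are of order $(T_0\sqrt{c})^{1/2}\sim c^{1/4}L^{1/2}$. The exponent $1/4$ and the subleading $-3\log L$ correction in $T_0$ are tuned precisely so that $e^{\|b'\|_\infty T_0}$ times the fluctuation scale still produces a vanishing factor with a logarithmic safety margin that makes Freedman's inequality give super-polynomial decay. I do not foresee a serious obstacle beyond this bookkeeping; it is essentially an optimisation of the compact-time argument of \cite{JTS} pushed to the boundary of what Lemma \ref{3.5} allows.
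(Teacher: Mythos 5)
Your proposal is correct and follows essentially the same route as the paper's proof: the same Grönwall amplification $e^{\|b'\|_\infty T_0}=c^{-1/4}L^{-3/4}$, the same observation that the deterministic error $\delta c^{1/2}L^2$ is absorbed leaving half of $\epsilon e^{-\|b'\|_\infty T_0}$ for the martingale term, and the same application of Lemma \ref{3.5} with $\epsilon'=\tfrac{\epsilon}{2}c^{1/4}L^{3/4}$ to obtain a probability bound of order $\exp(-\mathrm{const}\cdot L^{1/2})\to 0$. The bookkeeping and the identification of $T_0$ as the threshold at which Freedman's inequality still wins are exactly the paper's.
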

\begin{proof}
For $c$ chosen sufficiently small, 
\begin{align*}
\sup_{0\leq t\leq T_0}|X_{n(t)}(x)-\psi_t(x)|
\leq \left(\sup_{0\leq t\leq T_0}|S_{n(t)}(x)|+ \delta T_0 c^{\frac{1}{2}}\log(c^{-1})\right)+\|b'\|_{\infty} \int_{0}^{T_0} \sup_{0\leq t\leq r}|X_{n(t)}(x)-\psi_t(x)|dr.
\end{align*}
Therefore, by Gronwall's inequality \cite{Gro},
\begin{align*}
\sup_{0\leq t\leq T_0}|X_{n(t)}(x)-\psi_t(x)| \leq \left(\sup_{0\leq t\leq T_0}|S_{n(t)}(x)|+ \delta c^{\frac{1}{2}}(\log(c^{-1}))^2\right)e^{\|b'\|_{\infty}T_0}.
\end{align*}
Thus,
\begin{align*}
&\limsup_{c\to 0} \mathbb{P}\left(\sup_{0\leq t\leq T_0} \left| X_{n(t)}(x)-\psi_t(x)\right|>\epsilon\right)\\
&\leq \limsup_{c\to 0} \mathbb{P}\left(\left(\sup_{0\leq t\leq T_0}|S_{n(t)}(x)|+ \delta c^{\frac{1}{2}}(\log(c^{-1}))^2\right)>\epsilon e^{-\|b'\|_{\infty}T_0} \right)\\
&\leq\limsup_{c\to 0} \mathbb{P}\left(\sup_{0\leq t\leq T_0}|S_{n(t)}(x)|>\frac{\epsilon}{2} c^{\frac{1}{4}}(\log(c^{-1}))^{\frac{3}{4}} \right) \\
&\leq\limsup_{c\to 0}\exp\left(\frac{-\epsilon^2c^{\frac{1}{2}}(\log(c^{-1}))^{\frac{3}{2}}}{4\delta_0 T_0\sqrt{c}}\right)\\
&=\limsup_{c\to 0}\exp\left(\frac{-\epsilon^2\|b'\|_{\infty}(\log(c^{-1}))^{\frac{1}{2}}}{\delta_0}\right),
\end{align*}
where the penultimate line used Lemma \ref{3.5}.
The required result follows.
\end{proof}

\section{Analysis of fluctuations}
\label{sec:fluc}
Having shown that $X_{n(t)}(x)$ is well approximated by $\psi_t(x)$, it is natural to ask about the distribution of the fluctuations. In \cite{JTS} it is shown that 
\[
c^{-1/4}\left ( X_{n(t)}(x) - \psi_t(x) \right )
\]
converges in distribution with respect to the Skorokhod topology to the solution of a linear SDE. For the purpose of analysing the long-time behaviour of $X_{n(t)}(x)$ it turns out to be more convenient to analyse the `pulled-back' fluctuations
$$\psi_t^{-1}(X_{n(t)}(x))-x.$$

For any $t,s \geq 0$, $\psi_{t+s}(x)=\psi_t(\psi_s(x))$. Therefore $$\psi_t^{-1}(X_{n(t)}(x))=\psi_{t-n(t)c}^{-1}(\psi_{n(t)c}^{-1}(X_{n(t)}(x))).$$ Moreover, $0\leq t-n(t)c<c \to 0$ and so $\psi_{t-n(t)c}^{-1}$ converges to the identity mapping. By continuity, the pulled-back fluctuation above has the same limit as 
$$\psi_{nc}^{-1}(X_{n}(x))-x$$
with $n=n(t)$.  We will show the fluctuations are of order $c^{\frac{1}{4}}$. Therefore, for each fixed $x\in \mathbb{R}$, let 
\begin{equation}
\widetilde{Z}_n(x)=c^{-\frac{1}{4}}\left(\psi_{nc}^{-1}(X_{n}(x))-x\right).
\end{equation}
For notational simplicity we will denote $\Phi_{t}(x)=\psi_{t}^{-1}(x)$. Let $Z_t(x)$ be the solution to the stochastic differential equation,
\begin{equation}\label{sdeequation}
dZ_t(x)=\sqrt{\rho_0}\Phi_{t}'(\psi_t(x)) \sqrt{h_{\nu}(\psi_{t}(x))}dB_{t}
\end{equation}
with $Z_0(x)=0$, where $B_t$ is a standard Brownian motion.
The main result of this section is stated as follows.
\begin{theorem}\label{fluc}
The stochastic process $\widetilde{Z}_{n(t)}(x)\to Z_t(x)$ in distribution as $c\to 0$ with respect to the Skorokhod topology.
\end{theorem}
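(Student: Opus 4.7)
The strategy is to decompose $\widetilde{Z}_{n(t)}(x)$ into a martingale plus a vanishing remainder, and then to apply a functional martingale central limit theorem. Set $U_n = \Phi_{nc}(X_n(x)) - x$ so that $\widetilde{Z}_n(x) = c^{-1/4} U_n$. We Taylor expand $U_n - U_{n-1}$ in both space (around $X_{n-1}$) and time (around $(n-1)c$). The crucial identity is $\partial_t \Phi_t(y) = -\Phi_t'(y)\, b(y)$, obtained by differentiating $\Phi_t(\psi_t(y)) = y$, which cancels the leading-order drift $cb(X_{n-1})$ coming from $X_n - X_{n-1} = Y_n + \beta_\nu(X_{n-1})$. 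Combining these expansions yields
\begin{equation*}
U_n - U_{n-1} = \Phi_{nc}'(X_{n-1}) Y_n + \tfrac{1}{2}\Phi_{nc}''(X_{n-1}) Y_n^2 + D_n + E_n,
\end{equation*}
where $D_n = \Phi_{nc}'(X_{n-1})\bigl(\beta_\nu(X_{n-1}) - c b(X_{n-1})\bigr)$ is the bias term and $E_n$ collects the third-order Taylor remainders together with the $O(c^2)$ time-Taylor error and the mixed cross-term $\Phi_{nc}''(X_{n-1}) Y_n \beta_\nu(X_{n-1})$.

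All remainder terms then vanish at the chosen scale. By Lemma \ref{2.4}, $|D_n| \leq \delta c^{3/2}\log(c^{-1})$ and $|\beta_\nu(X_{n-1})| \leq \delta c$; combined with $|Y_n| \leq \delta\sqrt{c}$ from Lemma \ref{2.2}, we obtain $|E_n| = O(c^{3/2})$. Hence $c^{-1/4}\sum_{i=1}^{n(t)}(D_i + E_i) = O(c^{1/4}\log(c^{-1})) \to 0$. The quadratic term is split further as
\begin{equation*}
\sum_{i=1}^{n(t)} \tfrac{1}{2}\Phi_{ic}''(X_{i-1}) Y_i^2 = \sum_{i=1}^{n(t)} \tfrac{1}{2}\Phi_{ic}''(X_{i-1}) \mathbb{E}\bigl[Y_i^2\,\big|\,\mathcal{F}_{i-1}\bigr] + N_{n(t)},
\end{equation*}
where the predictable part is $O(c^{1/2})$ by the second estimate of Lemma \ref{2.2}, and $N_{n(t)}$ is a martingale whose predictable quadratic variation is bounded by $\sum_i \Phi_{ic}''(X_{i-1})^2 \mathbb{E}[Y_i^4\,|\,\mathcal{F}_{i-1}] = O(c^{3/2})$ (using $|Y_i| \leq \delta\sqrt{c}$), so that $\|N_{n(t)}\|_{L^2} = O(c^{3/4})$. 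Both vanish after multiplication by $c^{-1/4}$. Consequently, uniformly on compact $t$-intervals and in probability, $\widetilde{Z}_{n(t)}(x) = M^c_t + o(1)$, where
\begin{equation*}
M^c_t = c^{-1/4} \sum_{i=1}^{n(t)} \Phi_{ic}'(X_{i-1}) Y_i
\end{equation*}
is a martingale.

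To conclude, apply a functional martingale central limit theorem (e.g.\ Theorem 7.1.4 of Ethier--Kurtz) to $M^c_t$. The conditional Lindeberg condition is immediate, since $|c^{-1/4}\Phi_{ic}'(X_{i-1}) Y_i| \leq C c^{1/4}$ by Lemma \ref{2.4}. For the predictable quadratic variation, Lemma \ref{2.2} gives $\mathbb{E}[Y_i^2\,|\,\mathcal{F}_{i-1}] = \rho_0 c^{3/2} h_\nu(X_{i-1}) + O(c^2 \log c^{-1})$, so
\begin{equation*}
\langle M^c\rangle_t = \rho_0 c \sum_{i=1}^{n(t)} \Phi_{ic}'(X_{i-1})^2 h_\nu(X_{i-1}) + O(c^{1/2}\log(c^{-1})).
\end{equation*}
Theorem \ref{3.6II} applies \emph{a fortiori} on compact time-intervals to replace $X_{i-1}$ by $\psi_{ic}(x)$ (using the continuity of $\Phi_t'$ and $h_\nu$), and the Riemann sum converges to $\rho_0 \int_0^t \Phi_s'(\psi_s(x))^2 h_\nu(\psi_s(x)) ds$, exactly the quadratic variation of $Z_t(x)$. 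Convergence in distribution in the Skorokhod topology follows, and since $Z_t$ is continuous this then gives the stated conclusion.

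The main technical point is the cancellation between $\partial_t \Phi$ and the $cb$-term at the first correction level: without it, $U_n$ would accumulate a drift of order $c^{1/2}\log(c^{-1})$ over $n(t) \sim 1/c$ steps, which would dominate the $c^{1/4}$ fluctuations. Propagating the random position $X_{i-1}$ through the smooth coefficients $\Phi'$, $\Phi''$ and $h_\nu$ in the quadratic-variation sum and verifying that the resulting error is $o(1)$ is routine given Theorem \ref{3.6II}, and provides a convenient bridge between the discrete martingale structure and the limiting Gaussian SDE.
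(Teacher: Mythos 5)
Your proposal is correct, and its core---the decomposition of $\widetilde{Z}_{n(t)}(x)$ into the martingale $c^{-1/4}\sum_{i}\Phi_{ic}'(X_{i-1}(x))Y_i(x)$ plus a uniformly vanishing remainder, driven by the cancellation $\dot{\Phi}_t(\psi_t(\cdot))=-\Phi_t'(\psi_t(\cdot))b(\psi_t(\cdot))$ against the drift $cb(X_{i-1})$---is exactly the paper's Lemma \ref{Zeq}, with the same variance identification (Lemma \ref{2.2} plus Theorem \ref{3.6II}, as in Lemma \ref{4.4}). Where you genuinely differ is the concluding limit theorem: the paper gets one-dimensional convergence from McLeish's CLT (Theorem \ref{mcleish}, with conditions checked in Lemma \ref{expo} and the Lindeberg-type lemma), upgrades to finite-dimensional distributions by showing the covariance of increments vanishes, and proves tightness separately via Aldous's criterion (Lemma \ref{4.5}); you instead invoke a functional martingale CLT in the style of Ethier--Kurtz, which packages fdd-convergence and tightness into the single hypothesis that the jumps vanish and the (predictable) quadratic variation converges in probability to the continuous deterministic function $\rho_0\int_0^t \Phi_s'(\psi_s(x))^2 h_\nu(\psi_s(x))\,ds$---conditions you verify with the same estimates. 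This is a legitimate and somewhat shorter route; the paper's route stays within the elementary tools (McLeish, Markov, Aldous) it reuses elsewhere. Two points you should make explicit: (i) boundedness of $\Phi_{ic}'$ and $\Phi_{ic}''$ at the random arguments $X_{i-1}(x)$ holds only after restricting to the high-probability event $\{\sup_{0\leq s\leq t}|X_{n(s)}(x)-\psi_s(x)|\leq 1\}$, as the paper does in Lemma \ref{Zeq}, so your jump bound and remainder bounds are conditional on that event; (ii) your splitting of the quadratic term into compensator plus martingale is finer than necessary---the paper simply bounds the whole term in $L^1$ using $\sum_i\mathbb{E}(Y_i(x)^2)\leq\delta c^{1/2}$---though both yield the same $O(c^{1/4}\log(c^{-1}))$ error after the $c^{-1/4}$ rescaling.
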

Note that as the limit process is almost surely continuous, it follows immediately that the process converges in distribution with respect to the topology of uniform convergence. The proof will consist of showing that in the limit $\widetilde{Z}_{n(t)}(x)$ and $Z_t(x)$ share the same finite dimensional distributions, together with an appropriate tightness argument. We start by evaluating the finite dimensional distributions. Observe that we can rewrite the fluctuations as the following sum,
$$\psi_{nc}^{-1}(X_{n}(x))-x=\sum_{i=1}^n \left(\Phi_{ic}(X_{i}(x))-\Phi_{(i-1)c}(X_{i-1}(x))\right).$$
The following lemma identifies the leading order terms of the fluctuations. 
\begin{lemma}\label{Zeq}
Set $$\widetilde{\mathcal{E}}_{n}(x) = \widetilde{Z}_{n}(x) -c^{-\frac{1}{4}}\sum_{i=1}^{n}\Phi_{ic}'(X_{i-1}(x))Y_i(x).$$
Then, for fixed $t>0$, $\sup_{0\leq n \leq n(t)}\left | \widetilde{\mathcal{E}}_{n}(x) \right | \to 0$ in probability as $c\to 0$.
\end{lemma}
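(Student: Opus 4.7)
The plan is to expand the increment
\[
D_i := \Phi_{ic}(X_i(x)) - \Phi_{(i-1)c}(X_{i-1}(x))
\]
by Taylor, separating it into a spatial part $\Phi_{ic}(X_i) - \Phi_{ic}(X_{i-1})$ and a temporal part $\Phi_{ic}(X_{i-1}) - \Phi_{(i-1)c}(X_{i-1})$, and to identify $\Phi'_{ic}(X_{i-1})Y_i$ as the single non-negligible martingale contribution. The identity $\psi_t(\Phi_t(y)) = y$ combined with the ODE \eqref{ode1} gives $\dot\Phi_t(y) = -b(y)\Phi'_t(y)$, so the temporal part equals $-c\,b(X_{i-1})\Phi'_{ic}(X_{i-1})$ up to an $O(c^2)$ remainder (using smoothness of $\Phi'_s$ in $s$, which in turn uses boundedness of $b, b', \Phi'_s, \Phi''_s$). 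For the spatial part, I would use $X_i - X_{i-1} = Y_i(x) + \beta_\nu(X_{i-1})$ together with the uniform bound $|\tilde\gamma|\leq\delta\sqrt c$ from Lemma \ref{2.2} to write a second-order Taylor expansion with a cubic remainder of size $O(c^{3/2})$. Collecting everything and subtracting $\Phi'_{ic}(X_{i-1})Y_i$ expresses $D_i - \Phi'_{ic}(X_{i-1})Y_i$ as a sum of four pieces: the drift-correction $\Phi'_{ic}(X_{i-1})(\beta_\nu(X_{i-1}) - cb(X_{i-1}))$, the quadratic term $\tfrac12\Phi''_{ic}(X_{i-1})\tilde\gamma(X_{i-1}-\theta_i)^2$, and the two Taylor remainders.

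Throughout the analysis I would use the $1$-periodicity inherited from $h_\nu$ (so that $b$, $\psi_t$, $\Phi_t$ and their spatial derivatives are $1$-periodic, hence uniformly bounded on $\mathbb{R}$ for $t$ in a compact interval) to ensure the constants in the Taylor bounds do not depend on $i$. The deterministic contributions are then easy: by Lemma \ref{2.4} the drift correction is $O(c^{3/2}\log c^{-1})$ per step, and the two Taylor remainders are $O(c^{3/2})$ and $O(c^2)$ respectively; summing over $i \leq n(t) = O(c^{-1})$ and multiplying by $c^{-1/4}$ gives a total of $O(c^{1/4}\log c^{-1}) = o(1)$, uniformly in $n \leq n(t)$.

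The main obstacle is the quadratic term $Q_i := \tfrac12\Phi''_{ic}(X_{i-1})\tilde\gamma(X_{i-1}-\theta_i)^2$, which is only $O(c)$ pointwise and so sums naively to $O(1)$, far too large after dividing by $c^{1/4}$. I would overcome this by writing $Q_i = A_i + M_i$ with $A_i := \tfrac12\Phi''_{ic}(X_{i-1})\mathbb{E}[\tilde\gamma(X_{i-1}-\theta_i)^2 \mid \mathcal{F}_{i-1}]$. By the second estimate in Lemma \ref{2.2}, $|A_i| \leq Cc^{3/2}$, so the conditional-mean contribution sums to $O(c^{1/2})$ and produces an $O(c^{1/4})$ contribution to $\widetilde{\mathcal{E}}_n$. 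The centred sequence $\{M_i\}$ is a martingale difference with $|M_i|\leq Cc$ and $\mathbb{E}[M_i^2\mid\mathcal{F}_{i-1}]\leq C\mathbb{E}[\tilde\gamma(X_{i-1}-\theta_i)^4\mid\mathcal{F}_{i-1}] = O(c^{2})$ (via $\tilde\gamma^4 \leq \|\tilde\gamma\|_\infty^2 \tilde\gamma^2$ and Lemma \ref{2.2}), so $\sum_{i\leq n(t)}\mathbb{E}[M_i^2\mid\mathcal{F}_{i-1}] = O(c)$. Applying Freedman's inequality (Theorem \ref{freed}) to $\{M_i\}$ and $\{-M_i\}$ with $\epsilon = \epsilon_0 c^{1/4}$, $M = Cc$ and $b = Cc$ then yields
\[
\mathbb{P}\!\left(\sup_{n\leq n(t)}\Bigl|c^{-1/4}\sum_{i=1}^n M_i\Bigr| > \epsilon_0\right) \leq 2\exp\!\left(-\frac{\epsilon_0^2 c^{1/2}}{2(C\epsilon_0 c^{5/4} + Cc)}\right) \to 0
\]
as $c\to 0$, and combining this with the deterministic estimates above closes the proof.
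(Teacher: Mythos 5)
Your decomposition of the increment (temporal part giving $-cb\,\Phi'$ via $\dot\Phi_t(y)=-b(y)\Phi_t'(y)$, spatial Taylor about $X_{i-1}$, drift correction from Lemma \ref{2.4}, and quadratic remainder) matches the paper's setup exactly. Where you diverge is in how the quadratic term $\tfrac12\Phi''_{ic}(\cdot)\tilde\gamma(X_{i-1}-\theta_i)^2$ is controlled. You are right that a pointwise bound of $O(c)$ would be fatal, and your fix --- centring it as $A_i+M_i$, noting $|A_i|=O(c^{3/2})$ by Lemma \ref{2.2} and applying Freedman's inequality to the martingale increments $M_i$ --- does close the gap. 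The paper, however, avoids the martingale decomposition altogether: it simply absorbs the quadratic part into the remainder bound $|\mathcal{E}_i|\leq\delta(|Y_i|^2+c^{3/2}\log c^{-1})$, observes $\sup_{n\leq n(t)}|\widetilde{\mathcal{E}}_n|\leq c^{-1/4}\sum_{i\leq n(t)}|\mathcal{E}_i|$ because the summands are nonnegative, and then uses the already-established estimate $\sum_{i\leq n(t)}\mathbb{E}(Y_i^2)\leq\delta c^{1/2}$ plus Markov's inequality. This is strictly weaker machinery than Freedman yet suffices because the bound needed is only $o(1)$ in probability, not a concentration tail. So your route buys a sharper tail estimate that the lemma does not require, at the cost of an extra layer of argument; the paper's route is more economical. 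One small slip: via $\tilde\gamma^4\leq\|\tilde\gamma\|_\infty^2\tilde\gamma^2$ and Lemma \ref{2.2}, $\mathbb{E}[\tilde\gamma^4\mid\mathcal{F}_{i-1}]$ is $O(c^{5/2})$, not $O(c^2)$ --- this only strengthens your estimate, so nothing breaks, but the stated power is off.
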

\begin{proof}
By Taylor's theorem, there exists some reminder term $R_i(x)$ which will be bounded below satisfying
\begin{align*}
\Phi_{ic}(X_{i}(x))-\Phi_{(i-1)c}(X_{i-1}(x))
&=\Phi_{ic}'(X_{i-1}(x))(X_i(x)-X_{i-1}(x))+\dot{\Phi}_{ic}(X_{i-1}(x))c+R_i(x).
\end{align*}
Since $\Phi_t(\psi_t(x))=x$ for every $x\in \mathbb{R}$, $t\in \mathbb{R}$, taking the derivative with respect to $t$ gives 
\[
\Phi_t'(\psi_t(x))\dot{\psi}_t(x)+\dot{\Phi}_t(\psi_t(x))=0.
\]
By definition, $\dot{\psi}_t(x)=b(\psi_t(x))$. Thus,
\[
\dot{\Phi}_t(\psi_t(x))=-\Phi_t'(\psi_t(x))b(\psi_t(x)).
\]
This holds for any $x\in \mathbb{R}$, so substituting $\Phi_t(X_{i-1}(x))$ in for $x$, it follows that
\[
\dot{\Phi}_{ic}(X_{i-1}(x))=-\Phi_{ic}'(X_{i-1}(x))b(X_{i-1}(x))
\]
and so
\[
\Phi_{ic}(X_{i}(x))-\Phi_{(i-1)c}(X_{i-1}(x)) = 
\Phi_{ic}'(X_{i-1}(x))(X_i(x)-X_{i-1}(x) - c b(X_{i-1}(x))+R_i(x).
\]

Recall $X_i(x)-X_{i-1}(x)=Y_i(x)+\beta_{\nu}(X_{i-1}(x))$ so
\begin{align*}
\Phi_{ic}(X_{i}(x))-\Phi_{(i-1)c}(X_{i-1}(x))&=\Phi_{ic}'(X_{i-1}(x))Y_i(x)+ \mathcal{E}_i(x),
\end{align*}
where
\[
\mathcal{E}_i(x)=R_i(x)+\Phi_{ic}'(X_{i-1}(x))\left(\beta_{\nu}(X_{i-1}(x))-cb(X_{i-1}(x))\right).
\]
Therefore
\[ 
\widetilde{Z}_{n(t)}(x)=c^{-\frac{1}{4}}\left(\sum_{i=1}^{n(t)}\Phi_{ic}'(X_{i-1}(x))Y_i(x)+\mathcal{E}_i(x)\right).
\]
All that remains is to find upper bounds on the error $\widetilde{\mathcal{E}}_{n}(x)=c^{-\frac{1}{4}}\sum_{i=1}^{n}\mathcal{E}_i(x)$. The Taylor remainder term is given by
\[
R_i(x)=\Phi_{ic}''(\zeta)(X_i(x)-X_{i-1}(x))^2+\ddot{\Phi}_{\rho}(X_{i-1}(x))c^2
\]
for some $\zeta$ between $X_{i-1}(x)$  and $X_i(x)$ and $(i-1)c<\rho<ic$. Fix $t > 0$. Since we only require convergence in probability, we may restrict to the high probability event
\[
\left \{ \sup_{0 \leq s \leq t} |X_{n(s)} - \psi_s(x)| \leq 1 \right \}.
\]
By the definition of $\psi_t(x)$ along with the assumption that $h_{\nu}$ is twice continuously differentiable there exists a constant $\delta>0$, dependent only on $t$, such that $|\Phi_{ic}''(\zeta)|<\delta$ for all $i \leq n(t)$. Therefore, using that $$X_i(x)-X_{i-1}(x)=Y_i(x)+\beta_{\nu}(X_{i-1}(x))$$ there exists a (possibly different) constant $\delta>0$, dependent again only on $t$, such that,
$$|R_i(x)|\leq \delta \left( |Y_i(x)|^2+c^{2}\right)$$
for all $i \leq n(t)$. Similarly, using Lemma \ref{2.4}, there exists a constant $\delta>0$ (dependent only on $t$) such that 
\begin{align*}
\left|\Phi_{ic}'(X_{i-1}(x))\left(\beta_{\nu}(X_{i-1}(x))-cb(X_{i-1}(x))\right)\right|&\leq |\Phi_{ic}'(X_{i-1}(x))|\left|\beta_{\nu}(X_{i-1}(x))-cb(X_{i-1}(x))\right|\\
&\leq \delta c^{\frac{3}{2}}\log(c^{-1}).
\end{align*}
Therefore,
$$|\mathcal{E}_i(x)|\leq \delta( |Y_i(x)|^2+c^{\frac{3}{2}}\log(c^{-1}))$$
for some positive constant $\delta$ dependent only on $t$. Thus, using from the proof of Lemma \ref{3.5} that 
\[
\sum_{i=1}^{n(t)}\mathbb{E}\left(Y_i(x)^2\right)\leq \delta c^{\frac{1}{2}},
\]
we get 
\[
\mathbb{E} \left ( \sup_{0\leq n \leq n(t)}|\widetilde{\mathcal{E}}_{n}(x)| \right ) \leq \delta c^{1/4} \log(c^{-1}).
\]
The result follows by Markov's inequality.
\end{proof}

All that remains is to analyse the term $c^{-\frac{1}{4}}\sum_{i=1}^{n}\Phi_{ic}'(X_{i-1}(x))Y_i(x)$. It is immediate from the definition of $Y_i$ that this is a martingale. We will thereore apply the following result of Mcleish \cite{M}.
\begin{theorem}[McLeish]\label{mcleish}
Let $(X_{k,n})_{1\leq k \leq n}$ be a martingale difference array with respect to the filtration $\mathcal{F}_{k,n}=\sigma(X_{1,n}, X_{2,n},...,X_{k,n}). $ Let $M_{n}=\sum_{i=1}^{n}X_{i,n}$ and assume that;
\begin{itemize}
\item[$(1)$] for all $\rho>0$, $\sum_{k=1}^n X_{k,n}^2  $ $\mathbbm{1}(|X_{k,n}|>\rho)\to 0$ in probability as $n\to \infty$.
\item[$(2)$] \; $\sum_{k=1}^n X_{k,n}^2 \to s^2$ in probability as $n\to\infty$ for some $s^2>0$.
\end{itemize}
Then $M_{n}$ converges in distribution to $\mathcal{N}(0, s^2).$
\end{theorem}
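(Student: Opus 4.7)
The natural approach is to prove convergence of characteristic functions: fix $t \in \mathbb{R}$ and establish that $\mathbb{E}[e^{itM_n}] \to e^{-t^2 s^2/2}$, then invoke L\'evy's continuity theorem to conclude $M_n \Rightarrow \mathcal{N}(0,s^2)$. The key device, which is McLeish's original contribution, is the complex-valued product
\[
T_n(t) = \prod_{k=1}^n (1 + it X_{k,n}).
\]
The martingale difference property $\mathbb{E}[X_{k,n} \mid \mathcal{F}_{k-1,n}] = 0$ yields $\mathbb{E}[T_n(t) \mid \mathcal{F}_{n-1,n}] = T_{n-1}(t)$, so $T_n(t)$ is itself a complex martingale with $\mathbb{E}[T_n(t)] = 1$ for every $n$. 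The whole scheme rests on extracting $e^{itM_n}$ from this identity as $n \to \infty$.

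The next step is to compare $T_n(t)$ with $e^{itM_n}$ via the Taylor expansion $\log(1+iy) = iy + y^2/2 + O(|y|^3)$, valid for $|y| \le 1/2$. A standard consequence of the conditional Lindeberg hypothesis $(1)$ is that $\max_k |X_{k,n}| \to 0$ in probability: if $\max_k|X_{k,n}| > \rho$ then at least one summand of $\sum_k X_{k,n}^2 \mathbbm{1}(|X_{k,n}| > \rho)$ is at least $\rho^2$, so the event has probability tending to zero by $(1)$. On the high-probability event $\{|t|\max_k|X_{k,n}| \le 1/2\}$ one can take a principal logarithm of each factor coherently, obtaining
\[
T_n(t) = \exp\!\Big(itM_n + \tfrac{t^2}{2} V_n + R_n\Big), \qquad V_n = \sum_{k=1}^n X_{k,n}^2,
\]
with $|R_n| \le C|t|^3 (\max_k |X_{k,n}|) V_n$. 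Condition $(2)$ then gives $V_n \to s^2$ in probability, and together with the negligibility of $\max_k |X_{k,n}|$ this forces $R_n \to 0$ in probability. Consequently
\[
T_n(t) \, e^{-t^2 s^2/2} - e^{itM_n} \to 0 \quad \text{in probability.}
\]

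The main work, and the principal obstacle, is upgrading this convergence in probability to convergence in expectation, for then $\mathbb{E}[T_n(t)] = 1$ yields $e^{-t^2 s^2/2}\, \mathbb{E}[e^{itM_n}] \to 1$, which is exactly the required characteristic-function convergence. Since $|e^{itM_n}| = 1$, only uniform integrability of $\{T_n(t)\}$ is at stake, and this is delicate because $|T_n(t)|^2 = \prod(1 + t^2 X_{k,n}^2) \le \exp(t^2 V_n)$ has no a priori uniform bound. My plan is a two-stage truncation. First, replace $X_{k,n}$ by the martingale-difference truncation $\widetilde{X}_{k,n} = X_{k,n}\mathbbm{1}(|X_{k,n}| \le 1) - \mathbb{E}[X_{k,n}\mathbbm{1}(|X_{k,n}| \le 1) \mid \mathcal{F}_{k-1,n}]$, and use condition $(1)$ to show that the associated modifications of $M_n$ and $V_n$ are negligible in probability. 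Second, stop the modified product at the first time $k$ at which the modified quadratic variation exceeds a level $B > s^2$; by condition $(2)$ this stopping time exceeds $n$ with probability tending to one. The stopped product is then deterministically bounded by a constant depending only on $B$ and $t$, which gives the required uniform integrability. Bounded convergence on the stopped process, followed by $B \to \infty$, then completes the proof.
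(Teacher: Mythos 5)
The paper does not actually prove this statement: Theorem \ref{mcleish} is quoted from McLeish \cite{M}, so there is no internal proof to compare against. Your outline (the product martingale $T_n(t)=\prod_k(1+itX_{k,n})$ with $\mathbb{E}[T_n(t)]=1$, the exponential comparison with $e^{itM_n}$, and truncation plus stopping to obtain uniform integrability) is the classical McLeish route, so the architecture is the right one.

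There is, however, a genuine gap, and it sits exactly at the sentence ``use condition $(1)$ to show that the associated modifications of $M_n$ and $V_n$ are negligible in probability.'' Writing $\widetilde{X}_{k,n}=X_{k,n}\mathbbm{1}(|X_{k,n}|\le 1)-\mathbb{E}[X_{k,n}\mathbbm{1}(|X_{k,n}|\le 1)\mid\mathcal{F}_{k-1,n}]$, one has $M_n-\widetilde{M}_n=\sum_k X_{k,n}\mathbbm{1}(|X_{k,n}|>1)-\sum_k\mathbb{E}[X_{k,n}\mathbbm{1}(|X_{k,n}|>1)\mid\mathcal{F}_{k-1,n}]$. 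Condition $(1)$ controls only the first (realized) sum; it gives no control of the predictable compensator in the second sum, which can stay of order one even when the realized tail sum vanishes with high probability. Indeed, with only $(1)$ and $(2)$ the stated conclusion is false, so no argument can close this step: take $X_{k,n}=Z_k+n\bigl(B_k-n^{-2}\bigr)$, $1\le k\le n$, with all variables independent, $Z_k=\pm n^{-1/2}$ with probability $1/2$ each and $\mathbb{P}(B_k=1)=n^{-2}$. This is a martingale difference array satisfying $(1)$ and $(2)$ with $s^2=1$ (with probability tending to one no $B_k$ fires), yet $M_n=\sum_k Z_k+n\sum_k B_k-1$ converges in distribution to $\mathcal{N}(-1,1)$; the shift $-1$ is precisely the compensator term above. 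McLeish's actual theorem carries an additional hypothesis — uniform integrability of $\max_k|X_{k,n}|$ (or boundedness of $\mathbb{E}[\max_k X_{k,n}^2]$) — which rules out such examples and is what lets one bound $\sum_k\mathbb{E}[|X_{k,n}|\mathbbm{1}(|X_{k,n}|>1)\mid\mathcal{F}_{k-1,n}]$; alternatively one can state $(1)$ in its conditional form. So to complete your proof you must either add that hypothesis (harmless for the application in this paper, where $|Y_i(x)|\le\delta\sqrt{c}$ uniformly) or replace $(1)$ by the conditional Lindeberg condition, and then justify the truncation step from it; as written, that step cannot be derived from $(1)$ and $(2)$ alone.
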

In order to use this result we establish a series of lemmas.
\begin{lemma}\label{4.4}
For fixed $t>0$,
$$\sum_{i=1}^{n(t)}\mathbb{E}\left(c^{-\frac{1}{2}}\left(\Phi_{ic}'(X_{i-1}(x))Y_i(x)\right)^2|\mathcal{F}_{i-1}\right)\to \rho_0\int_0^t (\Phi_{s}'(\psi_s(x)))^2h_{\nu}(\psi_s(x))ds $$
in probability as $c\to 0$.
\end{lemma}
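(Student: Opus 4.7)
The plan is to split the problem into three approximations: (i) replace the conditional second moment $\mathbb{E}(Y_i(x)^2 \mid \mathcal{F}_{i-1})$ by its leading order $\rho_0 c^{3/2} h_\nu(X_{i-1}(x))$, (ii) replace the random arguments $X_{i-1}(x)$ by the deterministic trajectory $\psi_{(i-1)c}(x)$, and (iii) identify the resulting deterministic sum as a Riemann approximation to the target integral.

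First, by definition $Y_i(x)=\tilde{\gamma}(X_{i-1}(x)-\theta_i)-\beta_\nu(X_{i-1}(x))$, so
\[
\mathbb{E}\bigl(Y_i(x)^2 \bigm| \mathcal{F}_{i-1}\bigr)=\int_0^1 \tilde{\gamma}(X_{i-1}(x)-\theta)^2 h_\nu(\theta)\,d\theta - \beta_\nu(X_{i-1}(x))^2.
\]
By Lemma \ref{2.2}, the first term equals $\rho_0 c^{3/2} h_\nu(X_{i-1}(x)) + O(c^2\log c^{-1})$, while by Lemma \ref{2.4} the square of the drift contributes $O(c^2)$. After multiplying by $c^{-1/2}$ and $(\Phi_{ic}'(X_{i-1}(x)))^2$ (which is uniformly bounded on the compact time interval $[0,t]$ by smoothness of $\psi_s$ and $h_\nu$), these error terms accumulate to at most $O(c^{1/2}\log c^{-1})\cdot t$ after summation over $i\leq n(t)=\lfloor t/c\rfloor$, which is deterministic and vanishes as $c\to 0$.

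Next, set
\[
G(s,y)=\bigl(\Phi_s'(y)\bigr)^2 h_\nu(y), \qquad \Sigma_c = \rho_0 c \sum_{i=1}^{n(t)} G(ic,X_{i-1}(x)).
\]
The bulk of the argument reduces to showing $\Sigma_c \to \rho_0\int_0^t G(s,\psi_s(x))\,ds$ in probability. By Theorem \ref{3.6II} applied with any $t < T_0$ (which holds eventually as $c \to 0$),
\[
\sup_{0\leq s\leq t}\bigl|X_{n(s)}(x)-\psi_s(x)\bigr|\to 0
\]
in probability. Since $G$ is $C^1$ jointly (from the smoothness of $\Phi_s$ and $h_\nu$) and $s\mapsto \psi_s(x)$ stays in a compact set on $[0,t]$, $G$ is uniformly Lipschitz in its second argument on the relevant domain. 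Hence replacing $X_{i-1}(x)$ by $\psi_{(i-1)c}(x)$ introduces a total error bounded by $L\cdot c \sum_{i=1}^{n(t)} |X_{i-1}(x)-\psi_{(i-1)c}(x)| \leq L\cdot t \cdot \sup_{0\leq s\leq t}|X_{n(s)}(x)-\psi_s(x)|$, which tends to $0$ in probability.

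Finally, the deterministic sum $\rho_0 c \sum_{i=1}^{n(t)} G(ic,\psi_{(i-1)c}(x))$ is a Riemann sum for $\rho_0\int_0^t G(s,\psi_s(x))\,ds$, and converges to it by continuity of $s\mapsto G(s,\psi_s(x))$ on $[0,t]$ (a shift of $ic$ versus $(i-1)c$ contributes only an additional $O(c)$ error per term by the smoothness of $\Phi_s$ in $s$). Combining the three steps gives the required convergence in probability.

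The main technical obstacle is step (ii): the number of summands is of order $c^{-1}$, so the pointwise-in-probability statement from Theorem \ref{3.6II} would not a priori be strong enough. What saves us is that Theorem \ref{3.6II} delivers the stronger conclusion of \emph{uniform} convergence in probability of $X_{n(s)}(x)-\psi_s(x)$ over $s \in [0,t]$, which can be pulled out of the sum as shown above. Everything else is a routine combination of Lemmas \ref{2.2} and \ref{2.4} with Riemann sum convergence.
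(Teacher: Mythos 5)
Your proposal is correct and follows essentially the same route as the paper: apply the second-moment estimate from Lemma \ref{2.2} (as used in the proof of Lemma \ref{3.5}) to extract the leading term $\rho_0 c^{3/2} h_\nu(X_{i-1}(x))$, identify the resulting sum as a Riemann approximation, and invoke Theorem \ref{3.6II} to replace $X_{n(s)}(x)$ by $\psi_s(x)$. The paper simply performs the Riemann-sum step before the substitution $X_{n(s)}(x)\mapsto\psi_s(x)$ whereas you substitute in the sum first, but these are interchangeable; your observation that the \emph{uniform} convergence over $[0,t]$ in Theorem \ref{3.6II} is what makes the $O(c^{-1})$-term sum manageable is exactly the point that makes the argument go through.

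One small point you should make explicit: to have a uniform Lipschitz constant for $G(s,y)=(\Phi_s'(y))^2 h_\nu(y)$ over the relevant range of $y$, and to guarantee $(\Phi_{ic}'(X_{i-1}(x)))^2$ is uniformly bounded, you should restrict (as in the proof of Lemma \ref{Zeq}) to the high-probability event $\{\sup_{0\leq s\leq t}|X_{n(s)}(x)-\psi_s(x)|\leq 1\}$, on which all the random arguments lie in a fixed compact set. Since you only need convergence in probability, this restriction costs nothing.
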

\begin{proof}
As $[0,t]$ is a compact time interval, by the proof of Lemma \ref{3.5} it follows that
\begin{align*}
\sum_{i=1}^{n(t)}\mathbb{E}\left(c^{-\frac{1}{2}}\left(\Phi_{ic}'(X_{i-1}(x))Y_i(x)\right)^2|\mathcal{F}_{k-1}\right)&=\rho_0\int_0^t (\Phi_{s}'(X_{n(s)}(x)))^2h_{\nu}(X_{n(s)}(x))ds + O(c^{\frac{1}{2}}\log(c^{-1})t).
\end{align*}
Then by Theorem \ref{3.6II}, 
$$\sum_{i=1}^{n(t)}\mathbb{E}\left(c^{-\frac{1}{2}}\left(\Phi_{ic}'(X_{i-1}(x))Y_i(x)\right)^2|\mathcal{F}_{k-1}\right)\to \rho_0\int_0^t (\Phi_{s}'(\psi_s(x)))^2h_{\nu}(\psi_s(x))ds$$
in probability as $c\to 0$.
\end{proof}
\begin{lemma}\label{expo}
For fixed $t>0$,
$$\sum_{i=1}^{n(t)}c^{-\frac{1}{2}}\left(\Phi_{ic}'(X_{i-1}(x))Y_i(x)\right)^2\to \rho_0\int_0^t (\Phi_{s}'(\psi_s(x)))^2h_{\nu}(\psi_s(x))ds $$
in probability as $c\to 0$.
\end{lemma}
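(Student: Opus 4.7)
The plan is to use Lemma \ref{4.4} together with a second-moment estimate that shows the difference between the sum of squares and the sum of conditional second moments vanishes in probability. For each $i$, set
\[
W_i(x) = c^{-\frac{1}{2}}\bigl(\Phi_{ic}'(X_{i-1}(x))Y_i(x)\bigr)^2 - \mathbb{E}\!\left(c^{-\frac{1}{2}}\bigl(\Phi_{ic}'(X_{i-1}(x))Y_i(x)\bigr)^2 \mid \mathcal{F}_{i-1}\right).
\]
Each $W_i$ is $\mathcal{F}_i$-measurable with $\mathbb{E}(W_i \mid \mathcal{F}_{i-1}) = 0$, so the partial sums $\sum_{i=1}^{n(t)} W_i$ form a martingale. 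In view of Lemma \ref{4.4}, it suffices to prove that $\sum_{i=1}^{n(t)} W_i \to 0$ in probability as $c \to 0$.

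By martingale orthogonality,
\[
\mathbb{E}\!\left[\Bigl(\sum_{i=1}^{n(t)} W_i\Bigr)^{\!2}\right] = \sum_{i=1}^{n(t)} \mathbb{E}\!\left[W_i^2\right] \leq \sum_{i=1}^{n(t)} \mathbb{E}\!\left[c^{-1}\bigl(\Phi_{ic}'(X_{i-1}(x))Y_i(x)\bigr)^4\right].
\]
Since $\psi_t(x)$ depends continuously on $t$ and $h_\nu$ is twice continuously differentiable, on the compact interval $[0,t]$ there is a constant $M = M(t,x)$ with $|\Phi_{ic}'(X_{i-1}(x))| \leq M$ (restricting, as in Lemma \ref{Zeq}, to the high-probability event $\{\sup_{0\leq s \leq t}|X_{n(s)}(x) - \psi_s(x)| \leq 1\}$). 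The uniform bound $|Y_i(x)| \leq \delta \sqrt{c}$ from Lemma \ref{2.4} gives $Y_i(x)^2 \leq \delta^2 c$, so
\[
\mathbb{E}\!\left[Y_i(x)^4 \mid \mathcal{F}_{i-1}\right] \leq \delta^2 c\, \mathbb{E}\!\left[Y_i(x)^2 \mid \mathcal{F}_{i-1}\right] \leq \delta'\, c^{5/2},
\]
where the final bound uses the conditional second-moment estimate $\mathbb{E}(Y_i^2 \mid \mathcal{F}_{i-1}) \leq \delta c^{3/2}$ established in the proof of Lemma \ref{3.5}. Combining these yields $\mathbb{E}[W_i^2] \leq \delta'' c^{3/2}$ for some constant $\delta'' = \delta''(t,x)$.

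Summing the $n(t) \sim t/c$ terms gives
\[
\mathbb{E}\!\left[\Bigl(\sum_{i=1}^{n(t)} W_i\Bigr)^{\!2}\right] \leq \delta''\, n(t)\, c^{3/2} \leq \delta''' t\, c^{1/2},
\]
which vanishes as $c \to 0$. Chebyshev's inequality then gives $\sum_{i=1}^{n(t)} W_i \to 0$ in probability, and combining with Lemma \ref{4.4} produces the claimed limit. The main (minor) obstacle is simply ensuring that the deterministic upper bound on $\Phi_{ic}'$ is valid uniformly in $i \leq n(t)$, which is handled by the same restriction-to-a-high-probability-event trick used in Lemma \ref{Zeq}; no new ideas beyond the estimates already in place are required.
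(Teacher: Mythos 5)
Your proof is correct and follows essentially the same route as the paper: both center the sum by its conditional expectations to form a martingale difference array, apply Markov/Chebyshev and martingale orthogonality to reduce to a fourth-moment bound, and then use the uniform bound $|Y_i(x)|\leq\delta\sqrt{c}$ together with the conditional second-moment estimate $\mathbb{E}(Y_i^2\mid\mathcal{F}_{i-1})\leq\delta c^{3/2}$ and boundedness of $\Phi_{ic}'$ on a high-probability event to conclude that the variance is $O(c^{1/2})$.
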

\begin{proof}
Define  
$$\mathcal{Y}_i(z):=c^{-\frac{1}{2}}\left(\Phi_{ic}'(X_{i-1}(x))Y_i(x)\right)^2-\mathbb{E}\left(c^{-\frac{1}{2}}\left(\Phi_{ic}'(X_{i-1}(x))Y_i(x)\right)^2|\mathcal{F}_{i-1}\right) 
$$ which is a martingale difference array with respect to the filtration $(\mathcal{F}_{i})_{i\leq n}$. We need to show that $\sum_{i=1}^{n(t)}\mathcal{Y}_i(z)$ converges to zero in probability as $c \to 0$. By Markov's inequality,
\[
\mathbb{P}\left(|\sum_{i=1}^{n(t)}\mathcal{Y}_i|>\eta \right)\leq \frac{1}{\eta^2}\mathbb{E}\left(|\sum_{i=1}^{n(t)}\mathcal{Y}_i|^2\right)=\frac{1}{\eta^2}\sum_{i=1}^{n(t)}\mathbb{E}(\mathcal{Y}_i^2).
\]
Using the property that for a random variable $X$, $\mathbb{E}((X-\mathbb{E}(X))^2)\leq \mathbb{E}(X^2)$,
\[
\mathbb{P}\left(|\sum_{i=1}^{n(t)}\mathcal{Y}_i|>\eta\right)\leq \frac{1}{\eta^2}\frac{1}{c}\sum_{i=1}^{n(t)} \mathbb{E}(\left(\Phi_{ic}'(X_{i-1}(x))Y_i(x)\right)^4).
\] 
With high probability, on a compact time interval $\Phi_{ic}'(X_{i-1}(x))$ is bounded  and thus by using the bounds from the proof of Lemma \ref{3.5} and that for each $0\leq i\leq n(t)$, $|Y_i(x)|<\delta\sqrt{c}$ it follows that  $$ \mathbb{E}(\left(\Phi_{ic}'(X_{i-1}(x))Y_i(x)\right)^4)\leq \delta c^{\frac{5}{2}}$$ for some positive contant $\delta$ dependent on $t$. Thus, there exists a constant $\delta>0$ such that
$$\mathbb{P}\left(|\sum_{i=1}^{n(t)}\mathcal{Y}_i|>\eta\right)\leq \delta \frac{c^{\frac{1}{2}}t}{\eta^2} $$
which converges to zero as $c \to 0$. 
\end{proof}
By Lemma \ref{expo}, condition (2) of Theorem \ref{mcleish} is satisfied and all that remains is to show that condition (1) is also satisfied. We prove this in the form of the following lemma.
\begin{lemma}
For each $x\in \mathbb{R}$, $t>0$ fixed and for all $\rho>0$, the following statement is satisfied.
$$c^{-\frac{1}{2}}\sum_{i=1}^{n(t)} \left(\Phi_{ic}'(X_{i-1}(x))Y_i(x)\right)^2  \mathbbm{1}(|c^{-\frac{1}{4}}\Phi_{ic}'(X_{i-1}(x))Y_i(x)|>\rho)\to 0$$ in probability as $c\to 0$.
\end{lemma}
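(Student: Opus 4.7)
The plan is to observe that, under the uniform control already established for the harmonic measure flow and the martingale increments, each individual term $c^{-1/4}\Phi_{ic}'(X_{i-1}(x))Y_i(x)$ is uniformly of order $c^{1/4}$, so for any fixed threshold $\rho>0$ every indicator in the sum is zero once $c$ is small enough on a high-probability event. This reduces the statement to a deterministic estimate, bypassing any delicate second-moment or tightness analysis.

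More precisely, first I would restrict attention to the event
\[
A = \left\{ \sup_{0 \leq s \leq t} |X_{n(s)}(x) - \psi_s(x)| \leq 1 \right\},
\]
which, by Theorem \ref{3.6II} (applied on the compact interval $[0,t]$, which is certainly covered by the logarithmic time window $T_0$ for small $c$), satisfies $\mathbb{P}(A) \to 1$ as $c \to 0$. On $A$, the trajectory $X_{n(s)}(x)$ stays in a fixed compact set $K \subset \mathbb{R}$ depending only on $t$ and $x$, so by continuity of $\Phi_s'$ in both variables and compactness of $[0,t] \times K$, there exists a constant $M = M(t,x) < \infty$ with
\[
\sup_{0\leq i \leq n(t)} \left|\Phi_{ic}'(X_{i-1}(x))\right| \leq M.
\]

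Next, I would invoke the uniform bound $|Y_i(x)| \leq \delta \sqrt{c}$ from Lemma \ref{2.4}. Combining these two estimates gives, on $A$,
\[
\left| c^{-\tfrac{1}{4}} \Phi_{ic}'(X_{i-1}(x)) Y_i(x) \right| \leq M \delta \, c^{1/4}
\]
for every $i \leq n(t)$. Therefore, given any $\rho > 0$, choosing $c$ small enough that $M \delta \, c^{1/4} < \rho$ forces every indicator $\mathbbm{1}(|c^{-1/4}\Phi_{ic}'(X_{i-1}(x))Y_i(x)| > \rho)$ to vanish on $A$, and hence the entire sum is identically zero on $A$.

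To conclude, fix $\eta > 0$ and observe that for all $c$ sufficiently small,
\[
\mathbb{P}\left( c^{-\tfrac{1}{2}} \sum_{i=1}^{n(t)} \left(\Phi_{ic}'(X_{i-1}(x))Y_i(x)\right)^2 \mathbbm{1}(|c^{-1/4}\Phi_{ic}'(X_{i-1}(x))Y_i(x)|>\rho) > \eta \right) \leq \mathbb{P}(A^c),
\]
which tends to $0$ as $c \to 0$. There is no real obstacle here; the main point is that the martingale increments are already small enough uniformly in $i$ to make the Lindeberg-type condition trivial once one has localised onto the event of good control of $X_{n(s)}(x)$.
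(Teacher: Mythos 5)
Your proof is correct and rests on the same key observation as the paper's: the uniform bounds $|Y_i(x)| \leq \delta\sqrt{c}$ (Lemma~\ref{2.4}) and $|\Phi'_{ic}(X_{i-1}(x))| \leq M$ on a high-probability event make every term $|c^{-1/4}\Phi'_{ic}(X_{i-1}(x))Y_i(x)|$ of order $c^{1/4}$, so all the indicators vanish once $c$ is small. Your execution is marginally cleaner than the paper's, which routes the estimate through Markov's inequality applied to $\max_i |c^{-1/4}\Phi'_{ic}(X_{i-1}(x))Y_i(x)|$ while asserting that $\Phi'_{ic}(X_{i-1}(x))$ is ``bounded with high probability'' without making the localisation explicit; by restricting to the event $A$ at the outset, you make the whole sum identically zero there, so no moment estimate is needed at all.
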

\begin{proof}
Let $\mu>0$ then, 
\begin{align*}
&\mathbb{P}\left(\sum_{i=1}^{n(t)} c^{-\frac{1}{2}}\left(\Phi_{ic}'(X_{i-1}(x))Y_i(x)\right)^2 \mathbbm{1}(|c^{-\frac{1}{4}}\Phi_{ic}'(X_{i-1}(x))Y_i(x)|>\rho)>\mu\right)\\
&\leq \mathbb{P}\left(\max_{1\leq i\leq n(t)}|c^{-\frac{1}{4}}\Phi_{ic}'(X_{i-1}(x))Y_i(x)|>\rho\right)\\
&\leq \frac{1}{\rho}\mathbb{E}\left(\max_{1\leq i\leq n(t)}|c^{-\frac{1}{4}}\Phi_{ic}'(X_{i-1}(x))Y_i(x)|\right)\\
\end{align*}
with the second inequality following by Markov's inequality. By Lemma \ref{2.4}, for all $0\leq i\leq {n(t)}$, $|Y_i(x)|<\delta \sqrt{c}$ for some positive constant $\delta$. Therefore, as on a compact time interval $\Phi_{ic}'(X_{i-1}(x))$ is bounded with high probability, there exists a constant $\delta>0$, dependent on $t$, such that,
$$\mathbb{P}\left(\sum_{i=1}^{n(t)} c^{-\frac{1}{2}}(Y_i(x))^2 \mathbbm{1}(|c^{-\frac{1}{4}}Y_i(x)|>\rho)>\mu\right)\leq \frac{1}{\rho} \delta c^{\frac{1}{4}}.$$
Thus, 
$$c^{-\frac{1}{2}}\sum_{i=1}^{n(t)} Y_i(x)^2  \mathbbm{1}(|c^{-\frac{1}{4}}Y_i(x)|>\rho)\to 0$$ in probability as $c \to 0$.
\end{proof}
Therefore, both conditions of Theorem \ref{mcleish} are satisfied. In order to show convergence in distribution of the process $(\widetilde{Z}_{n(t)}(x))_{t>0}$ all that remains is to check the covariance structure and prove that the family of processes $(\widetilde{Z}_{n(t)}(x))_{t>0}$ is tight with respect to $c$ under the Skorokhod topology \cite{B}. We know $(Z_t(x))_{t>0}$ has independent increments so we start by analysing the covariance structure of $(\widetilde{Z}_{n(t)}(x))_{t>0}$ in the limit. 
\begin{lemma}
Suppose $0\leq t_1<t_2$. Then
$$\mathrm{Cov}\left(\widetilde{Z}_{n(t_2)}(x)-\widetilde{Z}_{n(t_1)}(x),\widetilde{Z}_{n(t_1)}(x)\right)\to 0$$
as $c\to 0$.
\end{lemma}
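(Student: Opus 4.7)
The plan is to exploit the decomposition
\[
\widetilde{Z}_n(x) = M_n(x) + \widetilde{\mathcal{E}}_n(x), \qquad M_n(x) := c^{-\frac{1}{4}}\sum_{i=1}^n \Phi_{ic}'(X_{i-1}(x))\, Y_i(x),
\]
already established in Lemma \ref{Zeq}. The key observation is that $M_n(x)$ is a martingale with respect to $(\mathcal{F}_n)$, since $\Phi_{ic}'(X_{i-1}(x))$ is $\mathcal{F}_{i-1}$-measurable and $\mathbb{E}(Y_i(x)\mid\mathcal{F}_{i-1})=0$. Expanding the covariance bilinearly produces four terms, of which the leading one,
\[
\mathrm{Cov}\bigl(M_{n(t_2)}(x)-M_{n(t_1)}(x),\, M_{n(t_1)}(x)\bigr),
\]
vanishes identically by orthogonality of martingale increments (condition on $\mathcal{F}_{n(t_1)}$) together with $\mathbb{E} M_n=0$. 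The three remaining covariances each involve $\widetilde{\mathcal{E}}$ in at least one slot, so by Cauchy--Schwarz it suffices to show that $\|M_{n(t)}(x)\|_2$ stays bounded as $c\to 0$ for $t\in[0,t_2]$ and that $\|\widetilde{\mathcal{E}}_{n(t)}(x)\|_2\to 0$.

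The $L^2$ bound on $M_n$ is immediate: by martingale orthogonality and the tower property,
\[
\|M_n(x)\|_2^2 = c^{-\frac{1}{2}}\sum_{i=1}^n \mathbb{E}\!\left[(\Phi_{ic}'(X_{i-1}(x)))^2\,\mathbb{E}(Y_i(x)^2\mid\mathcal{F}_{i-1})\right] = O(t),
\]
using the deterministic boundedness of $\Phi_t'$ on compact $t$-intervals (a consequence of the $1$-periodicity of $b$, which forces $\psi_t(\cdot+1) = \psi_t(\cdot)+1$ and hence $\psi_t'$ and $\Phi_t'$ to be periodic) together with $\mathbb{E}(Y_i^2\mid\mathcal{F}_{i-1}) = O(c^{3/2})$ from Lemmas \ref{2.2} and \ref{2.4}.

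The main work lies in the $L^2$ estimate for $\widetilde{\mathcal{E}}_n$. The pointwise bound derived in the proof of Lemma \ref{Zeq} gives $|\mathcal{E}_i(x)|\leq \delta(Y_i(x)^2 + c^{3/2}\log(c^{-1}))$; the deterministic contribution is controlled by $c^{-1/4}\cdot n(t)\cdot \delta c^{3/2}\log(c^{-1}) = O(c^{1/4}\log(c^{-1}))$ uniformly. For the quadratic piece, a direct second moment computation using $|Y_i|\leq\delta\sqrt{c}$ on the diagonal and the tower property on the off-diagonal gives
\[
\mathbb{E}\!\left[\Bigl(\textstyle\sum_{i=1}^{n(t)} Y_i(x)^2\Bigr)^{\!2}\right] \leq \sum_i \mathbb{E}(Y_i^4) + 2\!\!\sum_{i<j}\!\mathbb{E}\bigl(Y_i^2\,\mathbb{E}(Y_j^2\mid\mathcal{F}_{j-1})\bigr) = O(nc^2) + O(n^2 c^3) = O(c),
\]
so $\|\sum Y_i^2\|_2 = O(c^{1/2})$ and hence $\|c^{-1/4}\sum Y_i^2\|_2 = O(c^{1/4})\to 0$. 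Combining yields $\|\widetilde{\mathcal{E}}_{n(t)}(x)\|_2\to 0$, and Cauchy--Schwarz applied to the three cross covariances finishes the argument.

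The main obstacle is precisely this $L^2$ control, since the $L^1$ bound from Lemma \ref{Zeq} is not sufficient for a covariance argument. The key quantitative point is that the cross-moments $\mathbb{E}(Y_i^2 Y_j^2)$ gain an extra factor of $c^{3/2}$ over the naive bound $\|Y_i\|_\infty^2 \|Y_j\|_\infty^2 = O(c^2)$ by inserting $\mathbb{E}(Y_j^2\mid\mathcal{F}_{j-1}) = O(c^{3/2})$, and this gain is exactly what makes the double sum $O(c)$ rather than $O(1)$, restoring the correct scale against the $c^{-1/2}$ prefactor.
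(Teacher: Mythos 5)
Your proof is correct and follows the same high-level strategy as the paper: decompose $\widetilde{Z}_n = M_n + \widetilde{\mathcal{E}}_n$ via Lemma \ref{Zeq}, kill the $M$--$M$ covariance exactly by orthogonality of martingale increments, and control the cross terms. The paper's own proof is a one-liner: it introduces $\widetilde{W}_n = c^{-1/4}\sum_i \Phi_{ic}'(X_{i-1})Y_i$ (your $M_n$) and simply asserts, citing Lemma \ref{Zeq}, that the covariance limit is unchanged when $\widetilde{Z}$ is replaced by $\widetilde{W}$. But Lemma \ref{Zeq} only delivers an $L^1$ bound $\mathbb{E}(\sup_n |\widetilde{\mathcal{E}}_n|) = O(c^{1/4}\log(c^{-1}))$, which is not by itself sufficient to pass a covariance to the limit; some uniform $L^2$ control is required. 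You correctly identify this as the substantive step and supply the missing $L^2$ estimate via the second-moment computation on $\sum_i Y_i^2$, using $\|Y_i\|_\infty = O(\sqrt{c})$ on the diagonal and the conditional bound $\mathbb{E}(Y_j^2 \mid \mathcal{F}_{j-1}) = O(c^{3/2})$ on the off-diagonal. Your observation that $b$ being $1$-periodic forces $\psi_t(\cdot+1) = \psi_t(\cdot)+1$, hence $\Phi_t'$ is $1$-periodic and therefore deterministically bounded on compact $t$-intervals, is a small but useful tightening: the paper obtains the corresponding bound in the proof of Lemma \ref{Zeq} only after restricting to a high-probability event, which would be awkward to carry through an $L^2$ argument. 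In short, your proof fills a genuine (if modest) gap left implicit in the paper.
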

\begin{proof}
Set
\[
\widetilde{W}_{n}(x) = c^{-\frac{1}{4}}\sum_{i=1}^{n}\Phi_{ic}'(X_{i-1}(x))Y_i(x).
\]
By Lemma \ref{Zeq}, 
\[
\lim_{c \to 0} \mathrm{Cov}\left(\widetilde{Z}_{n(t_2)}(x)-\widetilde{Z}_{n(t_1)}(x),\widetilde{Z}_{n(t_1)}(x)\right)
= \lim_{c \to 0} \mathrm{Cov}\left(\widetilde{W}_{n(t_2)}(x)-\widetilde{W}_{n(t_1)}(x),\widetilde{W}_{n(t_1)}(x)\right). 
\]
But the right-hand side is zero, using the tower law and that $\mathbb{E}(Y_k(x)|\mathcal{F}_{k-1})=0$.
\end{proof}
Therefore, in the limit, the process $(\widetilde{Z}_{n(t)}(x))_{t>0}$ shares the same covariance structure as $(Z_{n(t)})_{t>0}$ and hence we have convergence of finite dimensional distributions. All that remains before we can prove convergence as a process is to establish tightness. 
\begin{lemma}\label{4.5}
The family of processes $(\widetilde{Z}_{n(t)}(x))_{t>0}$ is tight with respect to $c$.
\end{lemma}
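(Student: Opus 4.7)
The plan is to reduce tightness of $\widetilde{Z}_{n(t)}(x)$ to tightness of its dominant martingale part, and then apply Aldous's tightness criterion. First, I would invoke Lemma \ref{Zeq} to write
\[
\widetilde{Z}_{n}(x) = \widetilde{W}_{n}(x) + \widetilde{\mathcal{E}}_n(x), \qquad \widetilde{W}_n(x) = c^{-1/4} \sum_{i=1}^n \Phi_{ic}'(X_{i-1}(x)) Y_i(x),
\]
where $\widetilde{W}_n(x)$ is an $(\mathcal{F}_n)$-martingale and, for every fixed $T > 0$, $\sup_{0 \leq n \leq n(T)} |\widetilde{\mathcal{E}}_n(x)| \to 0$ in probability as $c \to 0$. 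Since an additive term that vanishes uniformly in probability does not destroy tightness in the Skorokhod topology on $D[0,T]$, it suffices to prove tightness of $(\widetilde{W}_{n(t)}(x))_{0 \leq t \leq T}$ for each $T > 0$.

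To verify Aldous's criterion for the martingale, I need (a) pointwise tightness of $\widetilde{W}_{n(t)}(x)$ at each $t \in [0, T]$, and (b) for every $\eta > 0$, $\sup_{c} \mathbb{P}(|\widetilde{W}_{n(\tau + \delta)}(x) - \widetilde{W}_{n(\tau)}(x)| > \eta) \to 0$ as $\delta \to 0$, uniformly over $(\mathcal{F}_n)$-stopping times $\tau \leq T$. Condition (a) follows from Chebyshev together with the uniform second-moment bound already supplied by Lemma \ref{4.4}. For (b), orthogonality of the martingale increments gives
\[
\mathbb{E}\bigl[(\widetilde{W}_{n(\tau + \delta)}(x) - \widetilde{W}_{n(\tau)}(x))^2 \bigm| \mathcal{F}_{n(\tau)}\bigr] = c^{-1/2} \sum_{i=n(\tau)+1}^{n(\tau + \delta)} \mathbb{E}\bigl[(\Phi_{ic}'(X_{i-1}(x)))^2 Y_i(x)^2 \bigm| \mathcal{F}_{i-1}\bigr].
\]
Restricting to the high-probability event $\{ \sup_{0 \leq s \leq T} |X_{n(s)}(x) - \psi_s(x)| \leq 1 \}$ provided by Theorem \ref{3.6II}, the factor $(\Phi_{ic}'(X_{i-1}(x)))^2$ is bounded by a constant $M_T$ depending only on $T$ and $x$. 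Combining this with the bound $\mathbb{E}[Y_i(x)^2 | \mathcal{F}_{i-1}] \leq \delta_1 c^{3/2}$ obtained in the proof of Lemma \ref{3.5}, and noting that the sum contains at most $\delta/c + 1$ terms, the conditional variance of the increment is bounded by $C_T \delta$ for small $c$. Markov's inequality then closes condition (b).

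The principal obstacle is controlling $\Phi_{ic}'(X_{i-1}(x))$ uniformly in $c$ along the random trajectory, since $\Phi_s'$ is continuous but not globally bounded. This is precisely why Theorem \ref{3.6II} is invoked: it confines $X_{n(s)}(x)$ to a deterministic compact neighbourhood of $\psi_s(x)$ on $[0, T]$ with probability tending to $1$, on which $\Phi_s'$ admits the uniform bound $M_T$. The complementary event contributes an additive error that vanishes with $c$ and can be absorbed into $\epsilon$ in Aldous's criterion. Combining tightness of $\widetilde{W}_{n(\cdot)}(x)$ with the uniform vanishing of $\widetilde{\mathcal{E}}_{n(\cdot)}(x)$ from Lemma \ref{Zeq} yields tightness of $(\widetilde{Z}_{n(t)}(x))_{t > 0}$ on every compact time interval, which is equivalent to tightness in the Skorokhod topology on $[0, \infty)$.
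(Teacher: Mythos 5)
Your proposal follows essentially the same approach as the paper: reduce to the martingale part $\widetilde{W}_n(x)$ via Lemma \ref{Zeq}, then verify Aldous's criterion by Markov/Chebyshev using the second-moment bounds from Lemma \ref{4.4} and the proof of Lemma \ref{3.5}, restricting to a high-probability event to bound $\Phi_{ic}'$ along the trajectory. The only minor difference is that you keep the stopping-time form of Aldous's increment condition and appeal to orthogonality of martingale increments, while the paper passes directly to deterministic time intervals $(t_1, t_2)$; the underlying estimate is the same.
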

\begin{proof}
It is sufficient to show that Aldous's condition holds (see for example \cite[Theorem 16.10]{B}). Explicitly, we need to show that, for each $x$, and for $T>0$ not dependent on $c$,
$$\lim_{R\to \infty} \left(\sup_{0\leq t<T} \mathbb{P}\left(|\widetilde{Z}_{n(t)}(x)|\geq R\right)\right)=0$$
and if $\tau_t$ is a stopping time and $\delta_t$ converges to 0 as $c\to 0$ then,
$$|\widetilde{Z}_{n(\tau_t+\delta_t)}(x)-\widetilde{Z}_{n(\tau_t)}(x)|\to 0$$
in probability as $c\to0$. By Lemma \ref{Zeq}, it suffices for the first condition to show that  
$$\lim_{R\to \infty} \sup_{0\leq t<T} \mathbb{P}\left(\left|c^{-\frac{1}{4}}\sum_{i=1}^{n(t)} \Phi_{ic}'(X_{i-1}(x))Y_i(x)\right|\geq R\right)=0.$$
Since $Y_i(x)$ is a martingale difference array, by Markov's inequality,
\begin{align*}
\sup_{0\leq t<T} \mathbb{P}\left(\left|c^{-\frac{1}{4}}\sum_{i=1}^{n(t)} \Phi_{ic}'(X_{i-1}(x))Y_i(x)\right|\geq R\right)
&\leq \sup_{0\leq t<T}\frac{1}{R^2}\mathbb{E}\left(c^{-\frac{1}{2}}\left(\sum_{i=1}^{n(t)}\Phi_{ic}'(X_{i-1}(x)) Y_i(x) \right)^2\right)\\
&= \frac{1}{R^2}\left(\sum_{i=1}^{n(T)} \mathbb{E}(c^{-\frac{1}{2}}(\Phi_{ic}'(X_{i-1}(x))Y_i(x))^2 )\right).
\end{align*}
By Lemma \ref{4.4},
\[
\sum_{i=1}^{n(T)}\mathbb{E}\left(c^{-\frac{1}{2}}\left(\Phi_{ic}'(X_{i-1}(x))Y_i(x)\right)^2\right)\to \rho_0\int_0^T (\Phi_{s}'(\psi_s(x)))^2h_{\nu}(\psi_s(x))ds < \infty.
\]
Consequently, if we take the limit as $R\to \infty$ then the upper bound converges to $0$ and we have proved the first condition. For the second condition, it is sufficient to show that for any $\epsilon>0$ and for all $0<t_1<t_2$ where $(t_2-t_1)\to 0$ as $c\to 0$,
$$\lim_{c\to 0}\mathbb{P}\left( \sup_{t_1<t<t_2}|\widetilde{Z}_{n(t)}(x)-\widetilde{Z}_{n(t_1)}(x)|>\epsilon\right)=0.$$
We use a similar approach to the first condition. Using the bounds provided above and Markov's inequality,
\[
\lim_{c\to 0} \mathbb{P}\left( \sup_{t_1<t<t_2}|\widetilde{Z}_{n(t)}(x)-\widetilde{Z}_{n(t_1)}(x)|>\epsilon\right)
\leq \lim_{c\to 0} \frac{1}{\epsilon^2}\mathbb{E}\left(c^{-\frac{1}{2}}\left(\sum_{i=n(t_1)}^{n(t_2)} \Phi_{ic}'(X_{i-1}(x))Y_i(x) \right)^2\right).
\]
By taking conditional expectations and using the same arguments as above,
\[
\lim_{c\to 0} \mathbb{P}\left( \sup_{t_1<t<t_2}|\widetilde{Z}_{n(t)}(x)-\widetilde{Z}_{n(t_1)}(x)|>\epsilon\right)
\leq  \lim_{c\to 0} \delta (t_2-t_1) \frac{1}{\epsilon^2} = 0.
\]
Here $\delta>0$ is some constant and, for the last equality, we used that $(t_2-t_1)\to 0$ as $c\to 0$. The result follows.
\end{proof}

\section{Analysis of critical time window}
\label{sec:crit}
In Section \ref{sec:logconv} we showed that the harmonic measure flow $X_{n(t)}(x)$ converges to the the solution of the ODE given in \eqref{ode1}, $\psi_t(x)$, provided that $$0\leq t\leq \frac{1}{4\|b'\|_{\infty}}\left(\log(c^{-1})-3\log(\log(c^{-1}))\right).$$ In this section we suppose that the ODE has an unstable fixed point $a_u$. We show there exists a critical time window during which $X_{n(t)}(a_u)$ moves a macroscopic distance away from $a_u=\psi_t(a_u)$. 

Fix $t_0 > 0$. Our strategy will be to compare $X_t(a_u)$ to $\psi_{t-t_0}(X_{k_0}(a_u))$ when $t_0$ is large, where $k_0=n(t_0)$. Observe that if $t \geq t_0$, then 
 \[
\psi_{t-t_0}(X_{k_0}(a_u))=\psi_{t}(\psi_{t_0}^{-1}(X_{k_0}(a_u)))
=\psi_{t}(a_u+c^{\frac{1}{4}}\widetilde{Z}_{n(t_0)}(a_u)).
\] 
We sometimes use the expression $\psi_{t-t_0}(X_{k_0}(a_u))$ when $t < t_0$. In this case, the right hand side of the equation above can be used to interpret what we mean by this. 
In the previous section we showed $\widetilde{Z}_{n(t_0)}(a_u)\to Z_{t_0}(a_u)$ in distribution as $c\to 0$ where $Z_{t_0}(a_u)$ is Gaussian with mean zero and variance  given by $$\rho_0\int_0^{t_0} (\Phi_{s}'(\psi_s(a_u)))^2h_{\nu}(\psi_s(a_u))ds = \rho_0\int_0^{t_0} (\Phi_{s}'(a_u))^2h_{\nu}(a_u)ds.$$ By definition we know $\Phi_t(\psi_t(x))=x$. Therefore, by the chain rule $\Phi_t'(\psi_t(x))=(\psi_t'(x))^{-1}$. Furthermore, by the definition of $\psi_t$,  
\[
(\dot{\psi}_t)'(x)=b'(\psi_t(x))\psi_t'(x)
\]
with $\psi'(x) = 1$ and hence
\begin{equation}
\label{derivpsi}
\psi'(x) = \exp \left ( \int_0^t b'(\psi_s(x)) ds \right ).
\end{equation}
Set $b'(a_u) = \lambda_u > 0$. As $a_u$ is a stationary point,
$$ \psi_t'(a_u)=e^{\lambda_u t}.$$
It follows that the variance of $Z_{t_0}(a_u)$ is given by,
$$\rho_0\int_0^{t_0} e^{-2\lambda_u s} h_{\nu}(a_u)ds=\frac{\rho_0 h_{\nu}(a_u)}{2\lambda_u}\left(1-e^{-2\lambda_u t_0}\right).$$
For the remainder of this section we assume $h_{\nu}(a_u)>0$ so that the variance above is finite and non-zero. This assumption is not very restrictive because if $h_{\nu}(a_u)=0$ we can replace the density $h_{\nu}(x)$ with $\frac{h_{\nu}(x)+\delta}{1+\delta}$ for some small constant $\delta>0$, which would in turn replace $b(x)$ with $\frac{b(x)}{1+\delta}$. In particular, the fixed points remain in the same location and share the same stability properties. 

For notational convenience we will assume $\widetilde{Z}_{n(t)}$ and $Z_{t}$ are constructed on the same probability space. Hence, by restricting to a subsequence of $c$'s if necessary, we may assume that the stochastic process $\widetilde{Z}_{n(t)}(a_u)\to Z_{t}(a_u)$ in probability as $c\to 0$ with respect to the Skorokhod topology. Furthermore, by the $L^2$ martingale convergence theorem, $Z_{t_0}(a_u)\to Z_{\infty}(a_u)$ in $L^2$, and hence in probability, as $t_0\to \infty$. It follows that for all $\epsilon>0$,
\[
\lim_{t_0 \to 0} \lim_{c \to 0} \mathbb{P} \left ( |\widetilde{Z}_{n(t)}(a_u) -  Z_{\infty}(a_u)| > \epsilon \right ) = 0.
\]

The computations in the previous sections suggest a change in the behaviour of $X_{n(t)}(a_u)$ on a window around $t\approx \frac{1}{4\lambda_u}\log(c^{-1})$. Thus, the remainder of this section focuses on analysing the behaviour of $X_{n(t)}(a_u)$ on this time-scale. In Section \ref{subsec:anal} we construct a random time $T_1^*$ with the property that for any fixed $0 < T < T_1^*$, $$\sup_{t \in [T_1^*-T, T_1^*+T]} \left|X_{n(t)}(a_u)-\psi_{t}\left(a_u+c^{\frac{1}{4}}\widetilde{Z}_{n(t_0)}(a_u)\right)\right|\to 0$$ in probability as $c\to 0$ and then $t_0\to \infty$. Then in Section \ref{sec4.2} we analyse the stopping time $T_1^*$ and show that $T_1^*\approx \frac{1}{4\lambda_u}\log(c^{-1})$ and by this time $X_{n(t)}(a_u)$ has moved a macroscopic distance away from the unstable trajectory and towards a stable trajectory. 

\subsection{Convergence of the stopped process}
\label{subsec:anal}

The strategy for analysing the long time behaviour of $X_t(x) - \psi_t(x)$ is to study the process $\Phi'_t(x)(X_t(x) - \psi_t(x))$. How this process behaves is strongly dependent on the sign of $b'(\psi_t(x))$. In this section we analyse this process when $\psi_t(x)$ is close to the unstable point $a_u$, so $b'(\psi_t(x)) > 0$; in Section \ref{sec:convtraj} we will consider the case when $x$ is close to a stable point, so $b'(\psi_t(x)) < 0$.

Since $b(x)$ is twice continuously differentiable, there exists an interval $(x_-, x_+)$ containing $a_u$ such that $b'$ is monotone on $[x_-, a_u]$ and on $[a_u, x_+]$ (strict monotonicity is not required). We may assume further that $x_\pm$ are sufficiently close to $a_u$ such that $\lambda_u/2 \leq b'(x) \leq 3 \lambda_u/2$ for all $x \in [x_-, x_+]$ and 
\[
\max \{ a_u - x_-, x_+-a_u\} \leq \frac{\lambda_u}{8 \|b''\|_\infty}.
\]
For each $x \in (x_-, x_+)$, let $T(x) = \inf\{t \geq 0: \psi_t(x) \notin [x_-, x^+] \}$. By \eqref{derivpsi}, provided $t \leq T(x)$, $\psi'_t(x)$ is increasing in $t$ and
\[
e^{\lambda_u t/2} \leq \psi_t'(x) \leq e^{3\lambda_u t/2}.
\] 
More generally,
\begin{equation}\label{psiinfbound}
e^{-\|b'\|_\infty t} \leq \|\psi_t' \|_{\infty}\leq e^{\|b'\|_{\infty} t}.
\end{equation}

As above, fix some time $t_0>0$ and let $k_0=n(t_0)$. 
Let
$$I(t_1,t_2)=\int_{t_1}^{t_2} b'(\psi_{s-t_0}(X_{k_0}(a_u))ds$$
so
\[
\psi_t'(a_u+c^{\frac{1}{4}}\widetilde{Z}_{n(t_0)}(a_u)) = e^{I(0,t)}.
\]
Set
$$g(t,y)= e^{-I(0,t)}\left(y-\psi_{t-ck_0}(X_{k_0}(a_u))\right)$$
and define the stopping time 
$$\mathcal{T}=\inf\{s \geq t_0 :\; |g(s,X_{n(s)}(a_u))|>c^{\frac{1}{4}}e^{-\frac{3}{4}I(0,t_0)}\}. $$
In order to prove that $\left|X_{n(t)}-\psi_{t-ck_0}(X_{k_0}(a_u))\right|$ is small during the critical time interval, we will need to control when $\psi_{t-t_0}(X_{k_0}(a_u))$ leaves the interval $[x_-,x_+]$, so set
$$T_1 = \inf\left\lbrace t\geq t_0: \psi_{t-t_0}(X_{k_0}(a_u))\not \in [x_-,x_+]\right\rbrace.$$
Note that, for $t_0 \leq t_1 < t_2 \leq T_1$, 
\begin{equation}
\label{eq:Ibound}
0 < \lambda_u (t_2-t_1)/2 \leq I(t_1,t_2) \leq 3\lambda_u (t_2-t_1)/2.
\end{equation}
Our proof will require that both $T_1$ and $e^{I(0,t)}$ are not too large and thus we introduce a further random time
\begin{equation}\label{Tstarbound}
 T_1^*=\min\left(T_1,\; \inf\left\lbrace t \geq t_0 : e^{I(0,t)}>c^{-\frac{1}{4}}e^{\frac{I(0,t_0)}{8}}\right\rbrace,  c^{-\frac{1}{2}} \right).
\end{equation}
In this section, we will show that with high probability $\mathcal{T} > T_1^*$.
In Section \ref{sec4.2} we will analyse $T_1^*$ and show that with high probability the process $\psi_{t-t_0}(X_{k_0}(a_u))$ leaves the interval $[x_-,x_+]$ before  either of the other two upper bounds in the definition of $T_1^*$ and hence $T_1^*=T_1$. 

Write
\begin{align}\label{gequation}
g(nc,X_{n}(a_u))=& M(a_u,n)+L(a_u,n)
+\sum_{i=k_0}^{n-1}\left(cb'(\psi_{ic-ck_0}(X_{k_0}(a_u))-I((i-1)c,ic)\right)g(ic,X_{i}(a_u)))
\end{align}
where 
$$M(a_u,n):=\sum_{i=k_0}^{n-1}e^{-I(0, ic)} Y_{i+1}(a_u)$$
is a martingale term that we will show is small in Lemma \ref{Mbound} and $L(a_u,n)$ is a remainder term which we will show is small in Lemma \ref{Lbound}. 

\begin{lemma}\label{Mbound}
We have
$$\lim_{t_0\to \infty}\lim_{c\to 0}\mathbb{P}\left( \sup_{t_0\leq t\leq T_1^*}|M(a_u,n(t))|>c^{\frac{1}{4}}e^{-\frac{7}{8}I(0,t_0)}\right)= 0.$$
\end{lemma}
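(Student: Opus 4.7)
My plan is to observe that $n \mapsto M(a_u, n)$ is a martingale with respect to $(\mathcal{F}_n)_{n \geq k_0}$, starting from $M(a_u, k_0) = 0$, and to apply Doob's $L^2$ maximal inequality to the process stopped at $n(T_1^*)$. The crucial point is that the weights $e^{-I(0, ic)}$ depend only on $X_{k_0}(a_u)$ through the integrand $b'(\psi_{s-t_0}(X_{k_0}(a_u)))$, hence are $\mathcal{F}_{k_0}$-measurable, and $T_1^*$ is $\mathcal{F}_{k_0}$-measurable for the same reason. Conditioning on $\mathcal{F}_{k_0}$ therefore converts $M(a_u, \cdot)$ into a martingale with deterministic weights, a deterministic stopping time, and a deterministic threshold $c^{1/4} e^{-7I(0,t_0)/8}$, isolating the randomness coming from the future increments $Y_{i+1}(a_u)$.

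First I would estimate the predictable quadratic variation
$$\langle M\rangle_{n(T_1^*)}=\sum_{i=k_0}^{n(T_1^*)-1}e^{-2I(0,ic)}\,\mathbb{E}\bigl(Y_{i+1}(a_u)^2\bigm|\mathcal{F}_i\bigr).$$
From Lemma \ref{2.2} and the estimate used in the proof of Lemma \ref{3.5}, $\mathbb{E}(Y_{i+1}(a_u)^2\mid \mathcal{F}_i)\leq \delta c^{3/2}$ for some constant $\delta$. For $k_0\leq i\leq n(T_1^*)$, the inequality $T_1^*\leq T_1$ forces $\psi_{s-t_0}(X_{k_0}(a_u))\in[x_-,x_+]$ for all $s\in[t_0,ic]$, and on this range $b'\geq \lambda_u/2$, giving $I(0,ic)\geq I(0,t_0)+\lambda_u(ic-t_0)/2$. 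Summing the resulting geometric series yields
$$\mathbb{E}\bigl(\langle M\rangle_{n(T_1^*)}\bigm|\mathcal{F}_{k_0}\bigr)\leq \delta c^{3/2}e^{-2I(0,t_0)}\sum_{j=0}^{\infty}e^{-\lambda_u jc}\leq \delta'\, c^{1/2}\,e^{-2I(0,t_0)}$$
for a (possibly $\lambda_u$-dependent) constant $\delta'$.

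I would then apply Doob's $L^2$ maximal inequality conditionally on $\mathcal{F}_{k_0}$:
$$\mathbb{P}\Bigl(\sup_{t_0\leq t\leq T_1^*}|M(a_u,n(t))|>c^{1/4}e^{-7I(0,t_0)/8}\Bigm|\mathcal{F}_{k_0}\Bigr)\leq \frac{\delta'\, c^{1/2}\,e^{-2I(0,t_0)}}{c^{1/2}\,e^{-7I(0,t_0)/4}}=\delta'\,e^{-I(0,t_0)/4}.$$
Taking the unconditional expectation produces the bound $\delta'\,\mathbb{E}(e^{-I(0,t_0)/4})$. For fixed $t_0$, on the high-probability event $\{\psi_s(a_u+c^{1/4}\widetilde{Z}_{n(t_0)}(a_u))\in[x_-,x_+]\text{ for all }s\in[0,t_0]\}$ (which holds with probability tending to $1$ as $c\to 0$, since $c^{1/4}\widetilde{Z}_{n(t_0)}(a_u)\to 0$ in probability and $\psi_s(a_u)=a_u$) we have $I(0,t_0)\geq \lambda_u t_0/2$ and hence $e^{-I(0,t_0)/4}\leq e^{-\lambda_u t_0/8}$. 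Since $e^{-I(0,t_0)/4}\leq e^{\|b'\|_\infty t_0/4}$ deterministically, bounded convergence yields $\limsup_{c\to 0}\mathbb{E}(e^{-I(0,t_0)/4})\leq e^{-\lambda_u t_0/8}$, which vanishes as $t_0\to\infty$.

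The main obstacle is the interdependence between the random weights $e^{-I(0,ic)}$ driving the martingale and the random threshold involving $I(0,t_0)$. This is resolved by the $\mathcal{F}_{k_0}$-conditioning trick combined with the careful choice of the exponent $7/8$ in the statement: the gap between $2 I(0,t_0)$ (decay of $\langle M\rangle$) and $7I(0,t_0)/4$ (square of the threshold) produces the factor $e^{-I(0,t_0)/4}$, which is precisely what drives the probability to zero in the iterated limit.
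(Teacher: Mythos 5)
Your proof is correct, and it takes a genuinely different route from the paper. The paper applies Freedman's exponential martingale inequality (Theorem \ref{freed}) with the choice $r = e^{I(0,t_0)/8}$, yielding a bound of the form $\exp(-r^2/(2\delta)) = \exp(-e^{I(0,t_0)/4}/(2\delta))$, which decays doubly exponentially in $I(0,t_0)$. You instead apply Doob's $L^2$ maximal inequality, obtaining the polynomial bound $\delta' e^{-I(0,t_0)/4}$, which is far weaker quantitatively but still vanishes as $c \to 0$ and then $t_0 \to \infty$, which is all the iterated limit requires. Your key structural observation --- that $I(0,\cdot)$, the weights $e^{-I(0,ic)}$, and $T_1^*$ are all $\mathcal{F}_{k_0}$-measurable, so conditioning on $\mathcal{F}_{k_0}$ reduces everything to a martingale with deterministic weights, a deterministic (bounded) endpoint, and a deterministic threshold --- is exactly the right way to make the argument rigorous; the paper leaves this conditioning step implicit even though Freedman's inequality as stated requires deterministic parameters $M$, $\epsilon$, $b$. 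The quadratic-variation estimate via the geometric series $\sum_j e^{-\lambda_u jc}$, using $I(0,ic) \geq I(0,t_0) + \lambda_u(ic-t_0)/2$ for $ic \leq T_1^* \leq T_1$, matches the paper's Riemann-sum bound. The final step --- bounded convergence giving $\mathbb{E}(e^{-I(0,t_0)/4}) \to e^{-\lambda_u t_0/4}$ as $c \to 0$ (your inequality to $e^{-\lambda_u t_0/8}$ is conservative but sufficient), which vanishes as $t_0 \to \infty$ --- closes the argument. In short: your approach trades the sharp exponential concentration of Freedman for the elementary Doob inequality, and since the lemma only asserts a limit of probabilities rather than a rate, the weaker inequality suffices; the gain is a more self-contained and transparent argument.
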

\begin{proof}
The proof uses Theorem \ref{freed}. First, 
by Lemma \ref{2.4}, there exists a constant $\delta>0$ such that, 
for every $t_0\leq t\leq T_1^*$ and $k_0+1\leq i\leq n(t)$ 
$$|e^{-I(0, (i-1)c)}Y_i(a_u)|\leq \delta e^{-I(0, t_0)}\sqrt{c}$$ for sufficiently small $c$. Furthermore, in the previous sections we have shown for each $x$,
\begin{align*}
\mathbb{E}\left(Y_i(x)^2|\mathcal{F}_{i-1}\right)=&\rho_0c^{\frac{3}{2}}h_{\nu}(X_{i-1}(x))-\beta_{\nu}(X_{i-1}(x))^2\\
& + \left(\int_0^1\widetilde{\gamma}_P(X_{i-1}(x)-\theta)^2 h_{\nu}(\theta)d\theta-\rho_0c^{\frac{3}{2}}h_{\nu}(X_{i-1}(x))\right).
\end{align*}
From the bounds in Lemmas \ref{2.2} and \ref{2.4} there exists a constant $\delta>0$ such that $|\beta_{\nu}(X_{i-1}(x))|\leq \delta c$ and $\left|\int_0^1\tilde{\gamma}(x-\theta)^2 h_{\nu}(\theta)d\theta-\rho_0c^{\frac{3}{2}}h_{\nu}(x)\right|\leq  \delta c^2\log (c^{-1})$. Therefore for $t_0\leq t\leq T_1^*$ and $0\leq i\leq n(t)$,
\begin{align*}
\mathbb{E}\left(e^{-2I(0, (i-1)c)}Y_i(a_u)^2|\mathcal{F}_{i-1}\right)
&\leq e^{-2I(0, (i-1)c)}\rho_0c^{\frac{3}{2}}h_{\nu}(X_{i-1}(a_u))+e^{-2I(0, (i-1)c)}\delta(c^2+c^2\log (c^{-1}))\\
&\leq 2\rho_0h_{\nu}(X_{i-1}(a_u))c^{\frac{3}{2}}e^{-2I(0, (i-1)c)}
\end{align*}
for sufficiently small $c$.
Therefore,
\[
\sum_{i=k_0+1}^{n(t)}\mathbb{E}\left(e^{-2I(0, (i-1)c)}Y_i(a_u)^2|\mathcal{F}_{i-1}\right)\leq 2\rho_0\|h_{\nu}\|_{\infty}c^{\frac{1}{2}}\sum_{i=k_0+1}^{n(t)}ce^{-2I(0, (i-1)c)}
\]
we can approximate this sum with a Riemann integral to show
\[
\sum_{i=k_0+1}^{n(t)}\mathbb{E}\left(e^{-2I(0, (i-1)c)}Y_i(a_u)^2|\mathcal{F}_{i-1}\right)\leq \frac{2\rho_0\|h_{\nu}\|_{\infty}}{\lambda_u}c^{\frac{1}{2}}e^{-2I(0, t_0)}
\]
for $c$ sufficiently small. 
The result follows by Theorem \ref{freed}.
\end{proof}
\begin{lemma}\label{Lbound}
Let $L(a_u,n(t))$ be defined by equation \eqref{gequation}.
Then
$$ \sup_{t_0\leq t\leq T_1^* \wedge \mathcal{T}} |L(a_u,n(t))|<c^{\frac{1}{4}}e^{-\frac{7}{8}I(0,t_0)}.$$
\end{lemma}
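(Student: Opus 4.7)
The plan is to identify $L(a_u,n)$ explicitly via a one-step recursion for $g$, and then bound each of its error pieces deterministically on $\{n\leq n(T_1^*\wedge\mathcal{T})\}$ using the a priori controls on $|g|$, $|Y|$, $|\beta_\nu-cb|$, and the caps built into $T_1^*$.

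First I would derive the recursion
\[
g((i+1)c, X_{i+1}) = e^{-I(ic,(i+1)c)}\,g(ic, X_i) + e^{-I(0,(i+1)c)}\,Y_{i+1}(a_u) + e^{-I(0,(i+1)c)}\,(\beta_\nu(X_i(a_u))-J_i),
\]
where $J_i:=\int_{ic}^{(i+1)c}b(\psi_{s-t_0}(X_{k_0}(a_u)))\,ds$; set $\Psi_{ic}:=\psi_{ic-t_0}(X_{k_0}(a_u))$ and $D_i:=X_i(a_u)-\Psi_{ic}$, so $D_i=e^{I(0,ic)}g(ic,X_i)$. Combining Taylor expansions of $b$ around $\Psi_{ic}$ with the integrand Taylor of $J_i$ and Lemma \ref{2.4} gives
\[
\beta_\nu(X_i)-J_i = cb'(\Psi_{ic})D_i + O(c^{3/2}\log c^{-1}) + O(cD_i^2) + O(c^2),
\]
while $(1+cb'(\Psi_{ic}))e^{-I(ic,(i+1)c)} = 1 + (cb'(\Psi_{ic})-I(ic,(i+1)c)) + O(c^2)$. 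Telescoping from $g(t_0,X_{k_0})=0$ then extracts exactly the Gronwall sum in \eqref{gequation} (up to an $O(c^2)g(ic,X_i)$ index shift), and leaves $L$ as the sum of four explicit pieces: (I) the drift correction $\sum e^{-I(0,(i+1)c)}(\beta_\nu(\Psi_{ic})-cb(\Psi_{ic}))$; (II) the quadratic drift remainder $\sum e^{-I(0,(i+1)c)}O(cD_i^2)$; (III) the $O(c^2)g(ic,X_i)$ Taylor/index-shift terms; and (IV) the martingale shift $\sum(e^{-I(0,(i+1)c)}-e^{-I(0,ic)})Y_{i+1}$ relating the telescoped $Y$-sum to $M$.

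Next I would bound each piece on $\{n\leq n(T_1^*\wedge\mathcal{T})\}$ using: (a) $|g(ic,X_i)|\leq c^{1/4}e^{-3I(0,t_0)/4}$ from the definition of $\mathcal{T}$; (b) $|Y_{i+1}|\leq\delta\sqrt c$ and $|\beta_\nu-cb|\leq\delta c^{3/2}\log c^{-1}$ from Lemma \ref{2.4}; (c) the $T_1^*$-cap $e^{I(0,t)}\leq c^{-1/4}e^{I(0,t_0)/8}$; and (d) the \emph{exponential Riemann-sum bound} $c\sum_{i=k_0}^{n-1}e^{\pm I(0,ic)}\leq(2/\lambda_u)e^{\pm I(0,T_1^*)}$, which follows by change-of-variables $u=I(0,s)$ using $b'(\Psi_s)\geq\lambda_u/2>0$ on $[x_-,x_+]$. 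Pieces (I), (III), (IV) contribute at most $O(c^{1/2}(\log c^{-1})^2)\,e^{-I(0,t_0)}$, $O(c^{5/4}\log c^{-1})\,e^{-3I(0,t_0)/4}$, and $O(c^{1/2}\log c^{-1})\,e^{-I(0,t_0)}$ respectively, each of which is $o(c^{1/4}e^{-7I(0,t_0)/8})$ as $c\to 0$ for fixed $t_0$. The dominant piece is (II), for which the combination of (a), (c) and (d) gives $\sum e^{I(0,ic)}|g|^2\leq O(c^{-3/4})e^{-11I(0,t_0)/8}$, and hence $|L^{(\mathrm{II})}|\leq O(c^{1/4})e^{-11I(0,t_0)/8}$.

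Since $e^{-11I(0,t_0)/8}=e^{-I(0,t_0)/2}\cdot e^{-7I(0,t_0)/8}$, the last estimate is $<c^{1/4}e^{-7I(0,t_0)/8}$ once $t_0$ is sufficiently large to absorb the implicit constant and $c$ is sufficiently small, giving the stated bound on $L$. The main obstacle is precisely the quadratic remainder $O(cD_i^2)$: a naive bound $(n-k_0)\cdot\max e^{I(0,ic)}$ introduces a spurious $\log c^{-1}$ factor (via $n(T_1^*)-k_0=O(c^{-1}\log c^{-1})$) which would overwhelm the target; the remedy is precisely the integral comparison (d), which, because $e^{I(0,s)}$ grows exponentially and its integral is controlled by its upper endpoint, is $\log$-free and yields exactly the sharpness required.
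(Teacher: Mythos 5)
Your proposal is essentially the same argument as the paper's, just organized slightly differently. Both compute $L$ exactly by telescoping $g$ and Taylor expanding, then bound four explicit error pieces: the paper's $R_1,\dots,R_4$ correspond to your (III), your $O(c^2)$ Riemann error for $J_i$, (I), and (II) respectively, with (IV) arising only because you keep $e^{-I(0,(i+1)c)}$ in the recursion rather than multiplying through by $e^{I(ic,(i+1)c)}$ before telescoping, as the paper does; that algebraic choice gives the paper exactly $\sum e^{-I(0,ic)}Y_{i+1}=M$ with no martingale shift term. You correctly identify the shared key ideas: the a priori bound $|g|\leq c^{1/4}e^{-3I(0,t_0)/4}$ from $\mathcal{T}$, the $T_1^*$ cap $e^{I(0,t)}\leq c^{-1/4}e^{I(0,t_0)/8}$, Lemma~\ref{2.4}, and the change-of-variables Riemann comparison $c\sum_i e^{I(0,ic)}\lesssim \lambda_u^{-1}e^{I(0,T_1^*)}$; and you correctly single out the quadratic piece (paper's $R_4$) as dominant, landing on the same $O(c^{1/4})e^{-11I(0,t_0)/8}$ bound and the same observation that absorbing the implicit constant requires $t_0$ large (an implicit hypothesis the paper also needs but leaves unstated). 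One small slip: the two-sided statement $c\sum e^{\pm I(0,ic)}\leq(2/\lambda_u)e^{\pm I(0,T_1^*)}$ is wrong for the minus sign, where the integral is dominated by its lower endpoint and the correct bound is $(2/\lambda_u)e^{-I(0,t_0)}$; you apply the correct version in your actual estimates of (I), (III), (IV), so this is only a typographical error in stating tool (d).
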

\begin{proof}
Write $g(nc, X_{n}(a_u))$ as a telescopic sum,
\begin{align*}
g(nc, X_{n}(a_u))=&\sum_{i=k_0}^{n-1} \left(g((i+1)c, X_{i+1}(a_u))-g(ic, X_{i}(a_u))\right)\\
=&\sum_{i=k_0}^{n-1} (1-e^{I(ic,(i+1)c)})g((i+1)c, X_{i+1}(a_u))\\
&+\sum_{i=k_0}^{n-1} \left(e^{I(ic,(i+1)c)}g((i+1)c, X_{i+1}(a_u))-g(ic, X_{i}(a_u))\right).
\end{align*}
By Taylor expanding $(1-e^{I(ic,(i+1)c)})$ the first summation can be written as 
\begin{align*}
-\sum_{i=k_0}^{n-1} I((i-1)c,ic)g(ic, X_{i}(a_u))+R_1(a_u,n)
\end{align*}
where
\begin{align*}
R_1(a_u,n) =-&\sum_{i=k_0}^{n-1} \frac{e^{\zeta_i}}{2}I(ic,(i+1)c)^2g((i+1)c, X_{i+1}(a_u))\\
&-I((n-1)c,nc)g(nc, X_{n}(a_u))+I((k_0-1)c,k_0c)g(k_0c, X_{k_0}(a_u))
\end{align*}
for some $0\leq \zeta_i\leq I(ic,(i+1)c)$. Therefore, for $t \leq T_1^* \wedge\mathcal{T}$,
\begin{align*}
\left|R_1(a_u,n(t))\right|&\leq \frac{3\lambda_u e^{3\lambda_u c/2}}{4}c^{\frac{5}{4}}e^{-\frac{3}{4}I(0,t_0)}\sum_{i=k_0}^{n(t)-1} I(ic,(i+1)c) + 3 \lambda_u c^{\frac{9}{4}}e^{-\frac{3}{4}I(0,t_0)} \\
&\leq \frac{3\lambda_u e^{3\lambda_u c/2}}{4}c^{\frac{5}{4}}e^{-\frac{3}{4}I(0,t_0)} I(t_0,t) + 3 \lambda_u c^{\frac{9}{4}}e^{-\frac{3}{4}I(0,t_0)}\\
&\leq  \frac{3\lambda_u e^{3\lambda_u c/2}}{2}c^{\frac{5}{4}} e^{-\frac{3}{4}I(0,t_0)}\left(\log(c^{-\frac{1}{4}})+\frac{ 3\lambda_u t_0}{16}\right)
\end{align*}
where the last inequality follows from the definition of $T_1^*$. 

For the second term,
\begin{align*}
&e^{I(ic,(i+1)c)}g((i+1)c, X_{i+1}(a_u))-g(ic, X_{i}(a_u)) \\
= &e^{-I(0,ic)}\left ( (X_{i+1}(a_u) - X_i(a_u))-  (\psi_{(i+1)c-ck_0}(X_{k_0}(a_u) - \psi_{ic-ck_0}(X_{k_0}(a_u)) \right ) \\
= & e^{-I(0,ic)}\left ( Y_{i+1}(a_u) + \beta_\nu(X_i(a_u)) -  cb(\psi_{ic-ck_0}(X_{k_0}(a_u))) +  c^2\ddot{\psi}_{\rho_i}(X_{k_0}(a_u)) \right )
\end{align*}
for some $ic - ck_0 \leq \rho_i\leq (i+1)c - ck_0$.
Therefore
\begin{align*}
&\sum_{i=k_0}^{n-1} \left(e^{I(ic,(i+1)c)}g((i+1)c, X_{i+1}(a_u))-g(ic, X_{i}(a_u))\right)\\
=&M(a_u,n)+\sum_{i=k_0}^{n-1}e^{-I(0, ic)}\left(\beta_{\nu}(X_{i}(a_u))
-cb(\psi_{ic-ck_0}(X_{k_0}(a_u)))\right)+R_2(a_u,n)
\end{align*}
where 
\[
|R_2(a_u,n(t))|\leq \|b\|_{\infty}\|b'\|_{\infty}c\sum_{i=k_0}^{n(t)-1}ce^{-I(0, ic)}.
\]
By approximating the summation by a Riemann integral we see that,
\[
\sup_{t_0\leq t\leq T_1^* \wedge\mathcal{T}}|R_2(a_u,n(s))|\leq \frac{2\|b\|_{\infty}\|b'\|_{\infty}}{\lambda_u}ce^{-I(0,t_0)}.
\]
Set
$$R_3(a_u,n)=\sum_{i=k_0}^{n-1}e^{-I(0, ic)}\left(\beta_{\nu}(X_{i}(a_u))-cb(X_{i}(a_u))\right)$$
and
$$R_4(a_u,n)=c \sum_{i=k_0}^{n-1}e^{-I(0, ic)}\left(b(X_{i}(a_u))- b(\psi_{ic-ck_0}(X_{k_0}(a_u))) \right).$$
Using the same Riemann integral approximation as above along with the bound from Lemma \ref{2.4}, 
we see that
$$\sup_{t_0\leq t\leq T_1^* \wedge\mathcal{T}}|R_3(a_u,n(t))|\leq \frac{2\delta c^{\frac{1}{2}}\log(c^{-1})}{\lambda_u}e^{-I(0,t_0)}.$$
Taylor expanding the expression inside the summation  
$$R_4(a_u,n)=\sum_{i=k_0}^{n-1}e^{-I(0, ic)}cb''(\mu_i)(X_{i}(a_u)-\psi_{ic-ck_0}(X_{k_0}(a_u)))^2,$$ for some $\mu_i$ between $\psi_{ic-ck_0}(X_{k_0}(a_u))$ and $X_{i}(a_u)$.
For $t \leq T_1^* \wedge\mathcal{T}$ 
$$|X_{i}(a_u)-\psi_{ic-ck_0}(X_{k_0}(a_u))| = |g(ic,X_{i}(a_u))|e^{I(0,ic)}<c^{\frac{1}{2}}e^{-\frac{3}{2}I(0,t_0)}e^{I(0,ic)}.$$
Therefore, 
\[
\sup_{t_0\leq s\leq t}|R_4(a_u,n(s))|\leq\|b''\|_{\infty}c^{\frac{1}{2}}e^{-\frac{3}{2}I(0,t_0)}\sum_{i=k_0}^{n(t)-1}ce^{I(0,ic)}.
\]
We can approximate this sum with a Riemann integral again to reach the upper bound,
\[
\sup_{t_0\leq s\leq t}|R_4(a_u,n(s))|\leq\frac{2\|b''\|_{\infty}}{\lambda_u}c^{\frac{1}{2}}e^{-\frac{3}{2}I(0,t_0)}e^{I(0,t)}.
\]
However, since $t_0\leq t\leq T_1^*$, we have $e^{I(0,t)}\leq c^{-\frac{1}{4}}e^{\frac{I(0,t_0)}{8}}$. Thus,
\[
\sup_{t_0\leq s\leq t}|R_4(a_u,n(s))|\leq\frac{2\|b''\|_{\infty}}{\lambda_u}c^{\frac{1}{4}}e^{-\frac{11}{8}I(0,t_0)}.
\]
Combining all the summations above we see that 
$L(a_u,n)=R_1(a_u,n) + R_2(a_u,n)+R_3(a_u,n)+R_4(a_u,n)$. The result follows.
\end{proof}
\begin{theorem}\label{4.2}
For $\mathcal{T}$ and $T_1^*$ defined as above,
$$
\lim_{t_0\to \infty}\lim_{c\to 0}\mathbb{P}\left( \mathcal{T} > T_1^* \right ) = 1
$$
and
$$\lim_{t_0\to \infty}\lim_{c\to 0}\mathbb{P}\left(\sup_{t_0\leq t\leq T_1^*}\left|X_{n(t)}\left(a_u\right)-\psi_{t-ck_0}\left(X_{k_0}\left(a_u\right)\right)\right|>e^{-\frac{1}{2}I(0,t_0)}\right)=0.$$
\end{theorem}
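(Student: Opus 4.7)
The plan is to combine the decomposition \eqref{gequation} with the bounds from Lemmas \ref{Mbound} and \ref{Lbound}, via a discrete Grönwall argument and a bootstrap on the stopping time $\mathcal{T}$. The heart of the matter is establishing $\mathcal{T} > T_1^*$; once that is in hand, the uniform bound on the difference $X_{n(t)}(a_u) - \psi_{t-ck_0}(X_{k_0}(a_u))$ follows from the identity $X_{n(t)}(a_u) - \psi_{t-ck_0}(X_{k_0}(a_u)) = g(t, X_{n(t)}(a_u)) e^{I(0,t)}$ together with the explicit upper bounds on $|g|$ and $e^{I(0,t)}$ built into the definitions of $\mathcal{T}$ and $T_1^*$.

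I would work on the high-probability event $E_c$ on which the conclusion of Lemma \ref{Mbound} holds uniformly on $[t_0, T_1^*]$. Setting $h_i := cb'(\psi_{ic-ck_0}(X_{k_0}(a_u))) - I((i-1)c, ic)$ and using that $b \in C^2$ with $\psi$ varying by $O(c)$ over a step of length $c$, one checks $|h_i| \leq Kc^2$ for a constant $K$ depending only on $\|b''\|_\infty$ and $\|b'\|_\infty$. On $E_c$, for every $n$ with $nc \leq T_1^* \wedge \mathcal{T}$ the decomposition \eqref{gequation} together with Lemmas \ref{Mbound}, \ref{Lbound} yields
\[
|g(nc, X_n(a_u))| \leq 2 c^{1/4} e^{-7I(0,t_0)/8} + \sum_{i=k_0}^{n-1} |h_i| \, |g(ic, X_i(a_u))|.
\]
Since $n(T_1^*) - k_0 \leq c^{-3/2}$ by \eqref{Tstarbound}, we have $\sum_i |h_i| \leq K c^{1/2}$, so that the homogeneous part of the associated discrete linear recursion contributes only a factor $\exp(Kc^{1/2}) \to 1$.

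The bootstrap is the main obstacle: Lemma \ref{Lbound} controls $|L|$ only on $\{t \leq T_1^* \wedge \mathcal{T}\}$, whereas the desired conclusion is $\mathcal{T} > T_1^*$. I would resolve the circularity by induction on $n \in [k_0, n(T_1^*)]$. Assume $|g(ic, X_i(a_u))| \leq c^{1/4} e^{-3I(0,t_0)/4}$ for every $k_0 \leq i \leq n-1$, so in particular $\mathcal{T} > (n-1)c$, which makes Lemmas \ref{Mbound} and \ref{Lbound} applicable at step $n$. The above bound then reads
\[
|g(nc, X_n(a_u))| \leq c^{1/4} e^{-3I(0,t_0)/4} \bigl(2 e^{-I(0,t_0)/8} + Kc^{1/2}\bigr).
\]
Because $I(0, t_0) \geq \lambda_u t_0/2$ grows without bound with $t_0$, taking $t_0$ large first and then $c$ small makes the bracketed factor strictly less than $1$, so the induction continues through $n = n(T_1^*)$. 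Hence $\mathcal{T} > T_1^*$ on $E_c$, and $\mathbb{P}(E_c) \to 1$ in the required order of limits by Lemma \ref{Mbound}.

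For the second claim, on $\{\mathcal{T} > T_1^*\}$ and $t_0 \leq t \leq T_1^*$, the definition of $\mathcal{T}$ gives $|g(t, X_{n(t)}(a_u))| \leq c^{1/4} e^{-3I(0,t_0)/4}$, while the definition of $T_1^*$ gives $e^{I(0,t)} \leq c^{-1/4} e^{I(0,t_0)/8}$. Multiplying these two estimates,
\[
|X_{n(t)}(a_u) - \psi_{t-ck_0}(X_{k_0}(a_u))| \leq e^{-5I(0,t_0)/8} < e^{-I(0,t_0)/2},
\]
since $I(0, t_0) > 0$. Taking $c \to 0$ and then $t_0 \to \infty$ finishes the proof.
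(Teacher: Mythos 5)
Your proof is correct and follows essentially the same route as the paper: bound the coefficient $|cb'(\psi_{ic-ck_0}(X_{k_0}(a_u)))-I((i-1)c,ic)|$ by a constant times $c^2$, feed the decomposition \eqref{gequation} together with Lemmas \ref{Mbound} and \ref{Lbound} into a Grönwall argument (the paper uses the continuous Grönwall inequality while you use a discrete version with an explicit induction), use $T_1^*\leq c^{-1/2}$ to make the Grönwall factor negligible, deduce $\mathcal{T}>T_1^*$ with high probability, and then multiply the definitions of $\mathcal{T}$ and $T_1^*$ to get the uniform bound. The only notable difference is that you make the bootstrap resolving the circularity between the stopping time $\mathcal{T}$ and the applicability of Lemma \ref{Lbound} fully explicit via induction on $n$, whereas the paper handles this implicitly by restricting Grönwall to $t<\mathcal{T}\wedge T_1^*$; that is an expository improvement rather than a new idea.
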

\begin{proof}
We begin by obtaining an estimate on the last term in \eqref{gequation}. Observe that
\begin{align*}
\left|cb'(\psi_{ic-ck_0}(X_{k_0}(a_u))-I((i-1)c,ic)\right|&=\left|cb'(\psi_{ic-ck_0}(X_{k_0}(a_u))-\int_{(i-1)c}^{ic} b'(\psi_{r-t_0}(X_{k_0}(a_u))dr\right|\\
&\leq \int_{(i-1)c}^{ic}\left|b'(\psi_{ic-ck_0}(X_{k_0}(a_u))-b'(\psi_{r-t_0}(X_{k_0}(a_u))\right|dr\\
&\leq \|b''\|_{\infty}\|b\|_{\infty}\int_{(i-1)c}^{ic}\left|ic-r+ t_0 - ck_0 \right|dr\\
&\leq 2 c^2 \|b''\|_{\infty}\|b\|_{\infty}
\end{align*}
where the penultimate inequality follows from the Mean Value Theorem. Thus, 
\begin{align*}
|g(t,X_{n(t)}(x))|\leq &\sup_{t_0\leq s\leq t}|M(a_u,n(s))|+\sup_{t_0\leq s\leq t}|L(a_u,n(s))|\\
&+c \|b''\|_{\infty}\|b\|_{\infty}\int_{t_0}^{t}|g(s,X_{n(s)}(a_u)))|ds.
\end{align*}
By Lemma's \ref{Mbound} and \ref{Lbound},
$$\lim_{t_0\to \infty}\lim_{c\to 0}\mathbb{P}\left( \left(\sup_{t_0\leq t< \mathcal{T} \wedge T_1^*}|M(a_u,n(t))|+\sup_{t_0\leq t< \mathcal{T} \wedge T_1^*}|L(a_u,n(t))|\right)> 2 c^{\frac{1}{4}}e^{-\frac{7}{8}I(0,t_0)}\right)= 0.$$
Therefore by Gronwall's inequality, if $t_0\leq t< \mathcal{T} \wedge T_1$ then with high probability,
$$|g(t,X_{n(t)}(x))|\leq  2 c^{\frac{1}{4}}e^{-\frac{7}{8}I(0,t_0)}e^{(t-t_0)c \|b''\|_{\infty}\|b\|_{\infty}}.$$
However, using that $T_1 \leq c^{-1/2}$, we have $2 c^{\frac{1}{4}}e^{-\frac{7}{8}I(0,t_0)}e^{(t-t_0)c \|b''\|_{\infty}\|b\|_{\infty}}<c^{\frac{1}{4}}e^{-\frac{3}{4}I(0,t_0)}$ if $t_0$ is sufficiently large and $c$ sufficiently small. Thus with high probability the stopping time $\mathcal{T} \neq \mathcal{T} \wedge T_1^*$ and  
\begin{align*}
\lim_{t_0\to \infty}\lim_{c\to 0}\mathbb{P}\left(\sup_{t_0\leq t\leq T_1^*}\left|X_{n(t)}\left(a_u\right)-\psi_{t-t_0}\left(X_{k_0}\left(a_u\right)\right)\right|>e^{-\frac{1}{2}I(0,t_0)}\right)=0.
\end{align*}

\end{proof}

\subsection{Convergence of the stopping time}\label{sec4.2}
The aim of this section is to analyse $T_1^*$, defined in \eqref{Tstarbound}, and show that with high probability it is close to $\frac{1}{4\lambda_u}\log(c^{-1})$ plus an error term which is tight in $c$. 
\begin{theorem}\label{4.4a}
For $T_1$ and $T_1^*$ defined as above
$$\lim_{t_0\to \infty}\lim_{c\to 0}\mathbb{P}\left( T_1^* = T_1 \right)=1.$$
Moreover, for any $\epsilon>0$ there exists a constant $\widetilde{T_\epsilon}>0$ such that
$$\lim_{t_0\to \infty}\lim_{c\to 0}\mathbb{P}\left(\left|T_1^*-\frac{1}{4\lambda_u}\log (c^{-1})\right|>\widetilde{T_\epsilon}\right)<\epsilon.$$
\end{theorem}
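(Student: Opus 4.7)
The plan is to analyze $T_1$ directly via the deterministic ODE, and then verify that the other two terms in the minimum defining $T_1^*$ exceed $T_1$ with high probability.

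First, using the identity $\psi_{t-t_0}(X_{k_0}(a_u)) = \psi_t(a_u + c^{1/4}\widetilde{Z})$ with $\widetilde{Z} := \widetilde{Z}_{n(t_0)}(a_u)$, and setting $u_t = \psi_t(a_u + c^{1/4}\widetilde{Z}) - a_u$, the scalar IVP $\dot{u} = b(a_u + u)$, $u_0 = c^{1/4}\widetilde{Z}$, integrates by separation of variables to
\[
T_1 = \int_{c^{1/4}\widetilde{Z}}^{x_\pm - a_u}\frac{dv}{b(a_u+v)},
\]
where the sign $\pm$ matches $\mathrm{sign}(\widetilde{Z})$, which determines whether the flow exits through $x_+$ or $x_-$. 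Writing $\frac{1}{b(a_u+v)} = \frac{1}{\lambda_u v} + \left(\frac{1}{b(a_u+v)} - \frac{1}{\lambda_u v}\right)$ and using $b(a_u+v) = \lambda_u v + O(v^2)$ to see that the second term extends continuously to $v = 0$ and is integrable on $[0, x_\pm - a_u]$, one obtains
\[
T_1 = \frac{1}{\lambda_u}\log\frac{d_\pm}{c^{1/4}|\widetilde{Z}|} + C_\pm + o(1)
\]
as $c \to 0$, where $d_\pm := |x_\pm - a_u|$ and $C_\pm := \int_0^{x_\pm - a_u}\left(\frac{1}{b(a_u+v)} - \frac{1}{\lambda_u v}\right)dv$ is a finite constant depending only on $b$.

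Next, $T_1 < c^{-1/2}$ is immediate from this expansion. For the constraint involving $e^{I(0,t)}$, I would use the identity $e^{I(0, t)} = \psi'_t(a_u + c^{1/4}\widetilde{Z})$ from \eqref{derivpsi}, together with the $1$D flow identity $\psi'_t(x) = b(\psi_t(x))/b(x)$ (which follows by differentiating the separation-of-variables formula above in the initial condition). Evaluating at $t = T_1$ gives
\[
e^{I(0, T_1)} = \frac{b(x_\pm)}{b(a_u + c^{1/4}\widetilde{Z})} = \frac{b(x_\pm)}{\lambda_u c^{1/4}\widetilde{Z}}(1 + o(1)),
\]
which is bounded by $K c^{-1/4}$ for a constant $K$ provided $|\widetilde{Z}|$ is bounded away from $0$ (an event of high probability in the iterated limit, since $Z_\infty(a_u)$ is a non-degenerate Gaussian). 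Meanwhile, $\psi_s(a_u + c^{1/4}\widetilde{Z})$ remains close to $a_u$ for $s \in [0, t_0]$ when $c$ is small, so by continuity of $b'$ and $b'(a_u) = \lambda_u$, $I(0, t_0) \to \lambda_u t_0$ as $c \to 0$. Hence the threshold $c^{-1/4}e^{I(0,t_0)/8}$ tends to $c^{-1/4} e^{\lambda_u t_0/8}$, which exceeds $K c^{-1/4}$ once $t_0$ is taken sufficiently large. This shows the second constraint is inactive in the iterated limit and gives $T_1^* = T_1$ with probability tending to $1$.

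Finally, for the tightness of $T_1^* - \frac{1}{4\lambda_u}\log(c^{-1})$, I would combine the first expansion with the convergences $\widetilde{Z}_{n(t_0)}(a_u) \to Z_{t_0}(a_u)$ in probability (via the Skorokhod embedding used throughout this subsection) and $Z_{t_0}(a_u) \to Z_\infty(a_u)$ in $L^2$ as $t_0 \to \infty$. In the iterated limit, $T_1 - \frac{1}{4\lambda_u}\log(c^{-1})$ converges in distribution to $\frac{1}{\lambda_u}\log(d_\pm/|Z_\infty(a_u)|) + C_\pm$, with $\pm$ now determined by $\mathrm{sign}(Z_\infty(a_u))$. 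Since $|Z_\infty(a_u)|$ is a non-degenerate half-Gaussian, both $\mathbb{P}(|Z_\infty(a_u)| < \delta)$ and $\mathbb{P}(|Z_\infty(a_u)| > R)$ vanish as $\delta \to 0$ and $R \to \infty$, so $\log|Z_\infty(a_u)|$ is tight, and choosing $\widetilde{T}_\epsilon$ to dominate both of its tails gives the required bound. The main obstacle is the quantitative analysis of the nonlinear ODE across the whole interval $[x_-, x_+]$: a naive linearization at $a_u$ recovers only the leading exponent $\frac{1}{4\lambda_u}\log(c^{-1})$ but misses the $O(1)$ constant $C_\pm$ that is essential for the limit to be a genuine random variable; the integral representation above, which decouples the singular logarithmic piece from a bounded non-singular correction, is the key computational ingredient.
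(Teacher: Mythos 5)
Your proof is correct, and it takes a genuinely different route from the paper's. The paper restricts to the event $\{C_\epsilon < |\widetilde{Z}_{n(t_0)}(a_u)| < C_\epsilon^{-1}\}$ and then Taylor-expands $\psi_t(a_u + c^{1/4}\widetilde{Z}_{n(t_0)}(a_u))$ around $a_u$ to second order, bounding $\psi_t''$ by differentiating \eqref{derivpsi}; this requires the auxiliary monotonicity assumption on $b'$ near $a_u$ built into the choice of $x_\pm$, and the argument splits into two cases according to whether $b'$ is increasing or decreasing on $[a_u,x_+]$. From the resulting upper and lower bounds on $T_1$, the paper extracts an explicit $\widetilde{T}_\epsilon$ in terms of $C_\epsilon$ and $x_\pm - a_u$. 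Your approach bypasses the Taylor machinery and the case split by exploiting the one-dimensionality of the flow twice: first, separation of variables gives an exact formula for $T_1$, and decomposing the integrand as the singular piece $1/(\lambda_u v)$ plus a bounded correction produces a sharp $o(1)$ expansion that isolates the $O(1)$ constant $C_\pm$; second, the identity $\psi_t'(x)=b(\psi_t(x))/b(x)$ gives a clean closed form for $e^{I(0,T_1)}$ rather than the iterated Taylor bounds used in the paper. This buys you an actual limit law for $T_1 - \tfrac{1}{4\lambda_u}\log(c^{-1})$ (namely $\tfrac{1}{\lambda_u}\log(d_\pm/|Z_\infty(a_u)|)+C_\pm$), which is stronger than the tightness the paper states and proves. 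Two small points worth making explicit if you wrote this up in full: (i) to rule out the middle constraint in the definition of $T_1^*$ you are implicitly using that $t\mapsto e^{I(0,t)}$ is increasing on $[t_0,T_1]$, so the supremum is attained at $T_1$; and (ii) the identity $T_1 = \int_{c^{1/4}\widetilde{Z}}^{x_\pm - a_u} dv/b(a_u+v)$ tacitly uses that, for $c$ small (given $t_0$), the flow $\psi_t(a_u+c^{1/4}\widetilde{Z})$ has not yet exited $[x_-,x_+]$ for $t<t_0$, so the exit time from time $t_0$ in the definition of $T_1$ agrees with the exit time from time $0$. Both are immediate but should be stated.
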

\begin{proof}
Since $Z_{\infty}(a_u)$ is a Gaussian with mean $0$ and finite non-zero variance it follows that for every $\epsilon>0$ there exists a constant $C_{\epsilon}>0$ such that 
\begin{equation}
\mathbb{P}\left( C_{\epsilon}<|Z_{\infty}(a_u)|< \frac{1}{C_{\epsilon}}\right)>1-\frac{\epsilon}{3}.\label{equation14}
\end{equation}
But we know that  $Z_{t_0}(a_u)\to Z_{\infty}(a_u)$ in distribution as $t_0\to \infty$, thus if $t_0$ is sufficiently large then, 
\[
\mathbb{P}\left( C_{\epsilon}<|Z_{t_0}(a_u)|< \frac{1}{C_{\epsilon}}\right)>1-\frac{2\epsilon}{3}.
\]
Moreover, by Theorem \ref{fluc}, $\widetilde{Z}_{n(t_0)}(a_u)\to Z_{t_0}(a_u)$ in distribution as $c\to 0$, thus if $c$ is sufficiently small then,  
\begin{equation}
\mathbb{P}\left( C_{\epsilon}<|\widetilde{Z}_{n(t_0)}(a_u)|< \frac{1}{C_{\epsilon}}\right)>1-\epsilon.\label{equation16}
\end{equation}
Note that $C_\epsilon$ is only dependent on the choice of $\epsilon$ and not on $c$ or $t_0$.
For the remainder of the proof, we will assume that we are on the event $\{ C_{\epsilon} < \widetilde{Z}_{n(t_0)}(a_u) < C_{\epsilon}^{-1}\}$; the negative case is similar. We will show that on this event, provided $c$ is sufficiently small and $t_0$ is sufficiently large, $T_1^* = T_1$ and $T_1 - \frac{1}{4 \lambda_u} \log(c^{-1})$ lies in some interval which does not depend on $c$ or $t_0$. 

By construction of $x_+$, $b'(x)$ is monotone on $[a_u, x_+]$. We will first establish the results in the case when $b'(x)$ is decreasing.
By Taylor expansion,
\begin{align*}
\psi_t(a_u+c^{\frac{1}{4}}\widetilde{Z}_{n(t_0)}(a_u))&=\psi_t(a_u)+\psi_t'(\eta)c^{\frac{1}{4}}\widetilde{Z}_{n(t_0)}(a_u)\\
&=a_u+\psi_t'(\eta)c^{\frac{1}{4}}\widetilde{Z}_{n(t_0)}(a_u)
\end{align*}
for some $a_u<\eta<a_u+c^{\frac{1}{4}}\widetilde{Z}_{n(t_0)}(a_u)$. 
By \eqref{derivpsi}, and using that $\psi_s(x)$ is increasing in $x$, if $t \leq T_1$, then 
\[
e^{I(0,t)} = \psi'_t(a_u+c^{\frac{1}{4}}\widetilde{Z}_{n(t_0)}(a_u)) \leq \psi'_t(\eta) \leq \psi'_t(a_u) = e^{\lambda_u t}
\]
and hence
\begin{equation}
\label{eq:tayexp}
a_u+e^{I(0,t)} c^{\frac{1}{4}}\widetilde{Z}_{n(t_0)}(a_u) \leq \psi_t(a_u+c^{\frac{1}{4}}\widetilde{Z}_{n(t_0)}(a_u))\leq a_u+e^{\lambda_u t} c^{\frac{1}{4}}\widetilde{Z}_{n(t_0)}(a_u).
\end{equation}
It immediately follows from the right-hand side that
\begin{equation}
\label{eq:T1lowbd}
T_1 \geq \frac{1}{\lambda_u}\log\left(\frac{x_+-a_u}{c^{\frac{1}{4}}\widetilde{Z}_{n(t_0)}(a_u)}\right) \geq \frac{1}{4 \lambda_u} \log(c^{-1}) + \log((x_+-a_u)C_\epsilon).
\end{equation}
The left-hand side, together with the bound that $e^{I(0,t)} \geq e^{\lambda_ut/2}$ for $t \leq T_1$, gives the crude estimate
\[
T_1 \leq \frac{1}{2\lambda_u}\log(c^{-1})+ \log((x_+-a_u)/C_\epsilon) < c^{-1/2}
\]
for $c$ sufficiently small and hence we can discard the $c^{-\frac{1}{2}}$ upper bound in the definition of $T_1^*$ in \eqref{Tstarbound}.
Now suppose that there exists an $t_0\leq t<T_1$ such that $e^{I(0,t)}>c^{-\frac{1}{4}}e^{\frac{I(0,t_0)}{8}}$ then, using the left-hand side of \eqref{eq:tayexp} again,
\[
a_u+e^{\frac{I(0,t_0)}{8}} C_{\epsilon} < x_+
\]
However, as $c\to 0$, $e^{\frac{I(0,t_0)}{8}}\to e^{\frac{\lambda_u t_0}{8}}$, leading to a contradiction when $t_0 > \frac{8}{\lambda_u}\log((x_+-a_u)/C_{\epsilon})$ and so, provided $c$ is sufficiently small and $t_0$ is sufficiently large, $T_1=T_1^*$.  

Now we show that, as $c\to 0$ and $t_0\to \infty$, $T_1^*$ is within a compact time of $\frac{1}{4\lambda_u}\log(c^{-1})$. The lower bound follows by equation \eqref{eq:T1lowbd} so we just need to find the upper bound. 

Taylor expanding around $a_u$ gives
\[
a_u+e^{\lambda_u t}c^{\frac{1}{4}}\widetilde{Z}_{n(t_0)}(a_u)+\psi_t''(\rho)c^{\frac{1}{2}}\widetilde{Z}_{n(t_0)}(a_u)^2 =  \psi_t\left(a_u+c^{\frac{1}{4}}\widetilde{Z}_{n(t_0)}(a_u)\right)
\]
with $a_u\leq \rho \leq a_u+c^{\frac{1}{4}}\widetilde{Z}_{n(t_0)}(a_u)$.
By differentiating the expression in \eqref{derivpsi}
we get 
\[
\psi_t''(x) = \psi_t'(x) \int_0^t b''(\psi_s(x))\psi'_s(x) ds.
\]
Since $b'$ is decreasing, $b''(x) \leq 0$ and $b'(x) \leq \lambda_u$ for $x \in  [a_u, x_+]$. Furthermore if $t \leq T_1$ then $\psi_t(\rho) \in [a_u, x_+]$ and hence $\psi_t'(\rho) \leq e^{\lambda_u t}$. Therefore
\[
0 \leq -\psi_t''(\rho) \leq \|b''\|_{\infty} e^{\lambda_u t} \int_0^t  e^{\lambda_u s} ds \leq \frac{\|b''\|_{\infty}}{\lambda_u} e^{2 \lambda_u t}
\] 
and so
\[
a_u+e^{\lambda_u t}c^{\frac{1}{4}}\widetilde{Z}_{n(t_0)}(a_u)- \frac{\|b''\|_{\infty}}{\lambda_u} e^{2 \lambda_u t} c^{\frac{1}{2}}\widetilde{Z}_{n(t_0)}(a_u)^2 \leq \psi_t\left(a_u+c^{\frac{1}{4}}\widetilde{Z}_{n(t_0)}(a_u)\right).
\]
Now take $t=\frac{1}{4\lambda_u}\log(c^{-1})+\frac{1}{\lambda_u}\log\left(\frac{2(x_+-a_u)}{\widetilde{Z}_{n(t_0)}(a_u)}\right)$. Then the left-hand side is equal to
\[
a_u + 2(x_+ - a_u) - \frac{4\|b''\|_{\infty}}{\lambda_u}(x_+-a_u)^2 = x_+ + (x_+ - a_u) \left ( 1 - \frac{4\|b''\|_{\infty}}{\lambda_u}(x_+-a_u) \right ) > x_+,
\]
where the final inequality used the assumptions made on $x_\pm$ at the beginning of Section \ref{subsec:anal}.
It follows that
\[
T_1 \leq \frac{1}{4\lambda_u}\log(c^{-1})+\frac{1}{\lambda_u}\log\left(\frac{2(x_+-a_u)}{\widetilde{Z}_{n(t_0)}(a_u)}\right) \leq \frac{1}{4\lambda_u}\log(c^{-1})+\frac{1}{\lambda_u}\log\left(\frac{2(x_+-a_u)}{C_\epsilon}\right)
\]
as required.

We now do the case when $b'(x)$ is increasing. Using a similar argument to that used to establish \eqref{eq:tayexp}, we have
\[
a_u+e^{\lambda_u t} c^{\frac{1}{4}}\widetilde{Z}_{n(t_0)}(a_u) \leq \psi_t(a_u+c^{\frac{1}{4}}\widetilde{Z}_{n(t_0)}(a_u))\leq a_u+e^{I(0,t)} c^{\frac{1}{4}}\widetilde{Z}_{n(t_0)}(a_u).
\]
From the left-hand side
\begin{equation}
\label{eq:T1upbi}
T_1 \leq \frac{1}{\lambda_u}\log\left(\frac{x_+-a_u}{c^{\frac{1}{4}}\widetilde{Z}_{n(t_0)}(a_u)}\right) \leq \frac{1}{4 \lambda_u} \log(c^{-1}) + \log((x_+-a_u)/C_\epsilon) < c^{-1/2}
\end{equation}
for $c$ sufficiently small and hence we can discard the $c^{-\frac{1}{2}}$ upper bound in the definition of $T_1^*$ in \eqref{Tstarbound}. 
To show that $T_1=T_1^*$, consider the Taylor expansion
\begin{align*}
a_u &= \psi_t \left (a_u + c^{\frac{1}{4}}\widetilde{Z}_{n(t_0)}(a_u) - c^{\frac{1}{4}}\widetilde{Z}_{n(t_0)}(a_u) \right ) \\
&= \psi_t \left (a_u + c^{\frac{1}{4}}\widetilde{Z}_{n(t_0)}(a_u) \right ) - c^{\frac{1}{4}}\widetilde{Z}_{n(t_0)}(a_u) \psi_t'\left (a_u + c^{\frac{1}{4}}\widetilde{Z}_{n(t_0)}(a_u) \right ) + \frac{1}{2} \psi_t''(\rho)c^{\frac{1}{2}}\widetilde{Z}_{n(t_0)}(a_u)^2
\end{align*}
for some $\rho \in [a_u, a_u + c^{\frac{1}{4}}\widetilde{Z}_{n(t_0)}(a_u)]$. As above,
\[
\psi_t''(x) = \psi_t'(x) \int_0^t b''(\psi_s(x))\psi'_s(x) ds.
\]
Hence, using that $b''(x) \geq 0$ and $b'(x) \geq \lambda_u$ for $x \in  [a_u, x_+]$,
\[
0 \leq \psi_t''(x) \leq \frac{\|b''\|_{\infty}}{\lambda_u} \psi_t'(x) \int_0^t b'(\psi_s(x))\psi'_s(x) ds = \frac{\|b''\|_{\infty}}{\lambda_u} \psi_t'(x)(\psi_t'(x) - 1) \leq \frac{\|b''\|_{\infty}}{\lambda_u} \psi_t'(x)^2.
\] 
Using that $\psi'_t(x)$ is increasing in $x$ and that 
\[
\psi_t'(a_u + c^{\frac{1}{4}}\widetilde{Z}_{n(t_0)}(a_u)) = e^{I(0,t)}, 
\]
we have
\[
0 \leq \psi_t''(\rho) \leq \frac{\|b''\|_{\infty}}{\lambda_u} e^{2I(0,t)}.
\]
Rearranging the Taylor expansion gives
\begin{align*}
x_+ &\geq \psi_t(a_u + c^{\frac{1}{4}}\widetilde{Z}_{n(t_0)}(a_u))  \\
&= a_u + c^{\frac{1}{4}}\widetilde{Z}_{n(t_0)}(a_u) \psi_t'(a_u + c^{\frac{1}{4}}\widetilde{Z}_{n(t_0)}(a_u)) - \frac{1}{2} \psi_t''(\rho)c^{\frac{1}{2}}\widetilde{Z}_{n(t_0)}(a_u)^2 \\
&\geq a_u + c^{\frac{1}{4}}\widetilde{Z}_{n(t_0)}(a_u) e^{I(0,t)} - \frac{\|b''\|_{\infty}}{2\lambda_u} e^{2I(0,t)} c^{\frac{1}{2}}\widetilde{Z}_{n(t_0)}(a_u)^2. 
\end{align*}
Now suppose that $T_1 \neq T_1^*$ so there exists some $t < T_1$ such that $e^{I(0,t)}>c^{-\frac{1}{4}}e^{\frac{I(0,t_0)}{8}}$. Then, using that $e^{\frac{I(0,t_0)}{8}} \to \infty$ as $c \to 0$ and then $t_0 \to \infty$, for $c$ sufficiently small and $t_0$ sufficiently large, there exists some (possibly different) $t < T_1$ such that 
\[
e^{I(0,t)} = 2 c^{-1/4} \frac{x_+-a_u}{Z_{n(t_0)}(a_u)}. 
\] 
But then
\begin{align*}
x_+ &\geq a_u + 2(x_+-a_u) - \frac{2 \|b'\|_{\infty}}{\lambda_u}(x_+-a_u)^2 \\
&= x_+ + (x_+-a_u) \left ( 1 - \frac{2 \|b'\|_{\infty}}{\lambda_u}(x_+-a_u) \right ) \\
&\geq x_+ + (x_+-a_u)/2 > x_+, 
\end{align*}
where the last line used the assumptions made on $x_\pm$ at the beginning of Section \ref{subsec:anal}. This gives the required contradiction and so $T_1=T_1^*$. Note that we have also proved the stronger statement
\[
\sup_{t \leq T_1} e^{I(0,t)} \leq 2 c^{-1/4} \frac{x_+-a_u}{Z_{n(t_0)}(a_u)}.
\]

It remains to show that, as $c\to 0$ and $t_0\to \infty$, $T_1^*$ is within a compact time of $\frac{1}{4\lambda_u}\log(c^{-1})$. The upper bound follows by equation \eqref{eq:T1upbi} so we just need to find the lower bound. 

By Taylor expanding around $a_u$,
\[
\psi_t\left(a_u+c^{\frac{1}{4}}\widetilde{Z}_{n(t_0)}(a_u)\right) \leq a_u+e^{\lambda_u t}c^{\frac{1}{4}}\widetilde{Z}_{n(t_0)}(a_u)+\psi_t''(\rho)c^{\frac{1}{2}}\widetilde{Z}_{n(t_0)}(a_u)^2
\]
with $a_u\leq \rho \leq a_u+c^{\frac{1}{4}}\widetilde{Z}_{n(t_0)}(a_u)$.
Exactly as above, if $t \leq T_1$ 
\[
0 \leq \psi_t''(\rho) \leq \frac{\|b''\|_{\infty}}{\lambda_u} e^{2I(0,t)} < \frac{4\|b''\|_{\infty}}{\lambda_u} c^{-1/2} \left ( \frac{x_+-a_u}{\widetilde{Z}_{n(t_0)}(a_u)} \right )^2
\]
and hence, using that $\widetilde{Z}_{n(t_0)}(a_u) < 1/C_{\epsilon}$
\[
\psi_t\left(a_u+c^{\frac{1}{4}}\widetilde{Z}_{n(t_0)}(a_u)\right) < a_u+e^{\lambda_u t}c^{\frac{1}{4}}C_{\epsilon}^{-1}+\frac{4\|b''\|_{\infty}}{\lambda_u}(x_+-a_u)^2.
\]
Now take $t=\frac{1}{4\lambda_u}\log(c^{-1})+\frac{1}{\lambda_u}\log\left(\frac{(x_+-a_u)C_\epsilon}{2}\right)$. Then the right-hand side is equal to
\[
a_u + \frac{1}{2}(x_+-a_u) + \frac{4\|b''\|_{\infty}}{\lambda_u}(x_+-a_u)^2 = x_+ - \frac{1}{2}(x_+-a_u)\left ( 1 - \frac{8\|b''\|_{\infty}}{\lambda_u}(x_+-a_u) \right ) < x_+.
\]
It follows that
\[
T_1 \geq \frac{1}{4\lambda_u}\log(c^{-1})+\frac{1}{\lambda_u}\log\left(\frac{(x_+-a_u)C_\epsilon}{2}\right)
\]
as required.

Therefore we have shown that
\[
\lim_{t_0 \to \infty} \lim_{c \to 0} \mathbb{P}(T_1 = T_1^*) \geq \lim_{t_0 \to \infty} \lim_{c \to 0} \mathbb{P}\left( C_{\epsilon}<|\widetilde{Z}_{n(t_0)}(a_u)|< \frac{1}{C_{\epsilon}}\right)>1-\epsilon.
\]
Since this holds for all $\epsilon > 0$, the first part of the theorem follows.

For the second part, set
\[
\widetilde{T_\epsilon} = \max \left \{ \frac{1}{\lambda_u} \left | \log\left(\frac{(x_+-a_u)C_\epsilon}{2}\right) \right |, \frac{1}{\lambda_u} \left | \log\left(\frac{2(x_+-a_u)}{C_\epsilon}\right) \right | \right \}.
\]
Then the analysis above shows that
$$\lim_{t_0\to \infty}\lim_{c\to 0}\mathbb{P}\left(\left|T_1^*-\frac{1}{4\lambda_u}\log (c^{-1})\right| \leq \widetilde{T_\epsilon}\right) \geq \lim_{t_0 \to \infty} \lim_{c \to 0} \mathbb{P}\left( C_{\epsilon}<|\widetilde{Z}_{n(t_0)}(a_u)|< \frac{1}{C_{\epsilon}}\right)>1-\epsilon
$$
as required.
\end{proof}

\section{Convergence of the whole trajectory}\label{sec:convtraj}
In this section we prove our main result that $$\sup_{t \in [0, \infty)}\left|X_{n(t)}(a_u)-\psi_{t}\left(a_u+c^{\frac{1}{4}}Z_{\infty}(a_u)\right)\right|\to 0$$
in probability as $c\to 0$. 

As $b(x)$ is periodic with period 1, $a_u-1$ and $a_u+1$ are also unstable fixed points. Since fixed points of real valued ODEs alternate between being stable and unstable, there exist stable fixed points $a_s^+$ and $a_s^-$ such that $a_u-1<a_s^-<a_u<a_s^+<a_u+1$ and we assume that $a_s^\pm$ are chosen so that there are no other fixed points in the interval $(a_s^-,a_s^+)$. 

The next two theorems extend the results from the previous sections to summarise the behaviour of the trajectory up to the end of the critical window, by which time the trajectory will have moved away from the unstable point towards one of the stable points. We finish by showing that that once the harmonic measure flow gets close enough to one of the stable fixed points, it remains near the stable trajectory for all time. 

\begin{theorem}\label{4.3a}
Let $T\geq 0$ be fixed. Then for any $\epsilon>0$
$$\lim_{t_0\to \infty}\lim_{c\to 0}\mathbb{P}\left(\sup_{t_0 \leq t \leq \frac{1}{4\lambda_u}\log(c^{-1})+T}|X_{n(t)}(a_u)-\psi_{t-t_0}\left(X_{k_0}\left(a_u\right)\right)|>\epsilon\right)=0.$$

\end{theorem}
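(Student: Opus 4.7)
The plan is to split the interval at the random time $T_1^*$, using the sharp control of Theorems \ref{4.2} and \ref{4.4a} on $[t_0, T_1^*]$ and a strong Markov restart to handle the residual piece $[T_1^*, \frac{1}{4\lambda_u}\log(c^{-1})+T]$ when it is non-empty. Fix $\epsilon>0$. By Theorem \ref{4.4a}, after passing to the iterated limit we may restrict to an event of probability at least $1-\epsilon$ on which $T_1^* = T_1$ and $|T_1^* - \frac{1}{4\lambda_u}\log(c^{-1})| \leq \widetilde{T_\epsilon}$ for some deterministic $\widetilde{T_\epsilon}$; on this event the residual interval has length at most $T' := T + \widetilde{T_\epsilon}$, independent of $c$ and $t_0$. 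On $[t_0, T_1^*]$, Theorem \ref{4.2} supplies, with high probability,
\[
\sup_{t_0 \leq t \leq T_1^*} |X_{n(t)}(a_u) - \psi_{t-ck_0}(X_{k_0}(a_u))| \leq e^{-I(0,t_0)/2},
\]
which in particular controls the value at $t=T_1^*$. The right hand side vanishes as $t_0 \to \infty$ because, for fixed $t_0$, the trajectory $\psi_{s-t_0}(X_{k_0}(a_u))$ remains in $[x_-,x_+]$ for every $s \in [0,t_0]$ in the limit $c \to 0$, and consequently $I(0,t_0) \geq \lambda_u t_0/2$.

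For the residual piece, set $y := X_{n(T_1^*)}(a_u)$ and $y' := \psi_{T_1^*-t_0}(X_{k_0}(a_u))$; by the display above, $|y-y'| \leq e^{-I(0,t_0)/2}$ with high probability. Since $T_1^*$ is a stopping time for $(\mathcal{F}_n)$, conditional on $\mathcal{F}_{n(T_1^*)}$ the shifted process $s \mapsto X_{n(T_1^*+s)}(a_u)$ has the same law as a fresh harmonic measure flow $\widetilde{X}_{n(s)}(y)$ driven by the i.i.d.\ angles $\theta_{n(T_1^*)+k}$ ($k \geq 1$) and started from $y$. Applying the compact-time convergence result of \cite{JTS} (equivalently, Theorem \ref{3.6II} restricted to the compact interval $[0,T']$) to this restarted flow gives
\[
\sup_{0 \leq s \leq T'} |\widetilde{X}_{n(s)}(y) - \psi_s(y)| \to 0
\]
in probability as $c \to 0$. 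By the semigroup property, $\psi_{T_1^*+s-t_0}(X_{k_0}(a_u)) = \psi_s(y')$, and continuous dependence of the ODE on initial data yields
\[
\sup_{0 \leq s \leq T'} |\psi_s(y) - \psi_s(y')| \leq e^{\|b'\|_\infty T'} |y-y'| \leq e^{\|b'\|_\infty T'} e^{-I(0,t_0)/2},
\]
which vanishes as $t_0 \to \infty$. Combining the three estimates by the triangle inequality closes the argument on the residual interval.

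The main obstacle is the restart step, since the initial point $y$ is random and depends on $c$. The short-time convergence result therefore needs to be invoked in a form which is uniform over starting points in a compact set. This can be handled by conditioning on $\mathcal{F}_{n(T_1^*)}$, after which $y$ is deterministic and the restarted flow is driven by an independent family of i.i.d.\ angles; one then observes that the martingale bound of Lemma \ref{3.5} and the subsequent Gronwall step in the proof of Theorem \ref{3.6II} involve constants depending only on $\|b'\|_\infty$ and $\|b\|_\infty$, and not on the starting point. The convergence is therefore uniform on compacts and can be transferred to the random starting point $y$, which lies in a bounded neighbourhood of $\{x_-,x_+\}$ with high probability.
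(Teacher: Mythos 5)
Your proposal is correct and follows essentially the same route as the paper: you split at the random time $T_1^*$, invoke Theorem \ref{4.2} on $[t_0, T_1^*]$ and Theorem \ref{4.4a} to control the location of $T_1^*$, then handle the residual interval via a Markov restart of the harmonic measure flow from $X_{n(T_1^*)}(a_u)$ combined with Lipschitz continuity of the ODE flow. The paper phrases the restart by explicitly constructing $X_{n,k}$ (the flow driven by angles $\theta_{k+1},\dots,\theta_n$) and observing it is equal in distribution to a fresh flow, while you phrase it via conditioning on $\mathcal{F}_{n(T_1^*)}$; these are the same argument. Your remark on why the restart is legitimate (uniformity of the Gronwall/martingale constants in the starting point, and the fact that $T_1^*$ is $\mathcal{F}_{k_0}$-measurable so the post-$T_1^*$ angles remain i.i.d.\ and independent) is a worthwhile addition that the paper leaves implicit, as is your observation that the residual window should be taken of length $T'=T+\widetilde{T_\epsilon}$ rather than $T$ after passing to the event where $|T_1^* - \tfrac{1}{4\lambda_u}\log(c^{-1})| \leq \widetilde{T_\epsilon}$.
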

\begin{proof}
We shall show that
$$\lim_{t_0\to \infty}\lim_{c\to 0}\mathbb{P}\left(\sup_{t_0 \leq t \leq T_1^*+T}|X_{n(t)}(a_u)-\psi_{t-t_0}\left(X_{k_0}\left(a_u\right)\right)|>\epsilon\right)=0.$$
The result then immediately follows by applying Theorem \ref{4.4a}. In fact,  only need to consider the case when $T_1^* < t \leq T_1^* + T$ since the case $t_0 \leq t \leq T_1^*$ follows directly from Theorem \ref{4.2}, using that $e^{-\frac{1}{2}I(0, t_0)}$ can be made arbitrarily small by taking $t_0$ large enough.

Recall the construction of $X_n$ from Section \ref{sec:harm} which corresponds to saying 
\[
X_n(x)=\frac{1}{2\pi i}\log (\Gamma_n(e^{2\pi i x}))
\]
where $\Gamma_n(x)=\phi_{n}^{-1}(x)=f_n^{-1}\circ \cdots \circ f_1^{-1}(x)$. For $n>k$, let
\[
\Gamma_{n,k}(x)=f_n^{-1}\circ \cdots \circ f_{k+1}^{-1}(x)
\]
so $\Gamma_{n,0}=\Gamma_n$. We can therefore construct in exactly the same way
\[
X_{n,k}(x)=\frac{1}{2\pi i}\log (\Gamma_{n,k}(e^{2\pi i x})).
\]
Since $\Gamma_{n,k}=\Gamma_{n,m}\circ \Gamma_{m,k}$ for $k<m<n$,
\begin{align*}
X_n(x)
&=X_{n,k}(X_k(x)).
\end{align*}
For $T_1^*\leq t\leq T_1^*+T$,
\begin{align*}
X_{n(t)}(a_u)= X_{n(t),n(T_1^*)}\left(X_{n(T_1^*)}(a_u)\right).
\end{align*}
Therefore,
\begin{align*}
|X_{n(t)}(a_u)-\psi_{t-t_0}\left(X_{k_0}\left(a_u\right)\right)|\leq&| X_{n(t),n(T_1^*)}\left(X_{n(T_1^*)}(a_u)\right)-\psi_{t-T_1^*}\left(X_{n(T_1^*)}(a_u)\right)|\\
&+|\psi_{t-T_1^*}\left(X_{n(T_1^*)}(a_u)\right)-\psi_{t-T_1^*}\left(\psi_{T_1^*-t_0}(X_{k_0}\left(a_u\right))\right)|
\end{align*}
We note that $X_{n,k}$ and $X_{n-k}$ are measurable with respect to different $\sigma$-algebras dependent on the choice of angles but are equal in distribution. Therefore, we can apply a version of Theorem \ref{3.6II} to show that for any $\epsilon>0$ and fixed $T\geq 0$,
$$\limsup_{c\to 0} \mathbb{P}\left(\sup_{T_1^*\leq t\leq T_1^*+T} | X_{n(t),n(T_1^*)}\left(X_{n(T_1^*)}(a_u)\right)-\psi_{t-T_1^*}\left(X_{n(T_1^*)}(a_u)\right)|>\epsilon\right)=0.$$
For the second term, 
\begin{align*}
&\sup_{T_1^*\leq t\leq T_1^*+T}|\psi_{t-T_1^*}\left(X_{n(T_1^*)}(a_u)\right)-\psi_{t-T_1^*}\left(\psi_{T_1^*-t_0}(X_{k_0}\left(a_u\right))\right)|\\
&\leq \sup_{T_1^*\leq t\leq T_1^*+T}\|\psi_{t-T_1^*}'\|_{\infty}|X_{n(T_1^*)}(a_u)-\psi_{T_1^*-t_0}(X_{k_0}\left(a_u\right))|\\
&\leq e^{\|b'\|_{\infty} T}|X_{n(T_1^*)}(a_u)-\psi_{T_1^*-t_0}(X_{k_0}\left(a_u\right))|
\end{align*}
where the last inequality follows by equation \eqref{psiinfbound}. Therefore, by Theorem \ref{4.2},
$$\mathbb{P}\left(\sup_{T_1^*\leq t\leq T_1^*+T}|\psi_{t-T_1^*}\left(X_{n(T_1^*)}(a_u)\right)-\psi_{t-T_1^*}\left(\psi_{T_1^*-t_0}(X_{k_0}\left(a_u\right))\right)|>e^{\|b'\|_{\infty} T-\frac{1}{2}I(0,t_0)}\right)\to 0$$
as $c\to 0$ and then $t_0\to \infty$. As $T\geq 0$ is fixed, for any $\epsilon>0$ we can choose $t_0$ large enough such that $0<e^{\|b'\|_{\infty} T}e^{-\frac{1}{2}I(0,t_0)}<\epsilon$, whence the result follows. 
\end{proof}

\begin{theorem}\label{4.5a}
Let $T\geq 0$ be fixed. Then
$$\sup_{0 \leq t \leq \frac{1}{4\lambda_u}\log(c^{-1})+T}\left|X_{n(t)}(a_u)-\psi_{t}\left(a_u+c^{\frac{1}{4}}Z_{\infty}(a_u)\right)\right| \to 0$$
in probability as $c \to 0$. 
\end{theorem}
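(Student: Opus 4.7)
My plan is to split the time interval $[0, \frac{1}{4\lambda_u}\log(c^{-1})+T]$ at a fixed $t_0 > 0$ and take sequential limits $c \to 0$ followed by $t_0 \to \infty$, as throughout Section \ref{sec:crit}. On the compact piece $[0, t_0]$, Theorem \ref{3.6II} yields $X_{n(t)}(a_u) \to \psi_t(a_u) = a_u$ uniformly in probability, while $\psi_t(a_u + c^{1/4} Z_\infty(a_u)) \to a_u$ uniformly because $|\psi_t(a_u + c^{1/4} Z_\infty(a_u)) - a_u| \leq e^{\|b'\|_\infty t_0} c^{1/4} |Z_\infty(a_u)|$ by \eqref{psiinfbound}, and this vanishes as $c \to 0$ for $t_0$ fixed. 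Thus both quantities converge uniformly to $a_u$ on $[0, t_0]$, so their difference vanishes there.

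On the longer piece $[t_0, \frac{1}{4\lambda_u}\log(c^{-1})+T]$, Theorem \ref{4.3a} yields $|X_{n(t)}(a_u) - \psi_{t-t_0}(X_{k_0}(a_u))| \to 0$ uniformly in probability in the iterated limit. Since $\psi_{t-t_0}(X_{k_0}(a_u)) = \psi_t(a_u + c^{1/4}\widetilde{Z}_{n(t_0)}(a_u))$, the task reduces to showing
\[
\sup_{t_0 \leq t \leq \frac{1}{4\lambda_u}\log(c^{-1})+T}\bigl|\psi_t(a_u + c^{1/4}\widetilde{Z}_{n(t_0)}(a_u)) - \psi_t(a_u + c^{1/4} Z_\infty(a_u))\bigr| \to 0
\]
in probability. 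For any $\epsilon > 0$, the proof of Theorem \ref{4.4a} (see \eqref{equation14}) provides $C_\epsilon > 0$ with $\mathbb{P}(C_\epsilon < |Z_\infty(a_u)| < 1/C_\epsilon) > 1 - \epsilon$. Restricting to this event and using $\widetilde{Z}_{n(t_0)}(a_u) \to Z_\infty(a_u)$ in probability under sequential limits, both quantities lie in $[C_\epsilon/2, 2/C_\epsilon]$ with the same sign and have difference less than any preassigned $\delta > 0$ once $c$ is small and $t_0$ is large.

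The crux is then a deterministic sensitivity bound for the flow: there exists a constant $C = C(C_\epsilon, T)$ independent of $c$ such that whenever $Z_1, Z_2$ share the same sign and $|Z_i| \in [C_\epsilon/2, 2/C_\epsilon]$,
\[
\sup_{0 \leq t \leq \frac{1}{4\lambda_u}\log(c^{-1})+T} \bigl|\psi_t(a_u + c^{1/4} Z_1) - \psi_t(a_u + c^{1/4} Z_2)\bigr| \leq C|Z_1 - Z_2|.
\]
Combined with the choice $\delta < \epsilon/C$, this completes the argument.

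The main obstacle is establishing this sensitivity bound, since a direct Gronwall estimate produces an amplification factor $e^{\|b'\|_\infty t}$ which, combined with the $c^{1/4}$ scaling, blows up on the window considered. The sharp estimate uses the mean value theorem to write the difference as $c^{1/4}|Z_1 - Z_2|\psi_t'(\eta)$ for some $\eta$ between the two initial conditions, with $\psi_t'(\eta) = \exp\bigl(\int_0^t b'(\psi_s(\eta))\,ds\bigr)$ by \eqref{derivpsi}. On $[0, T_1]$, where $T_1$ is the exit time of $\psi_s(\eta)$ from $[x_-, x_+]$, Taylor expanding $b'$ about $a_u$ gives $b'(\psi_s(\eta)) - \lambda_u = b''(a_u)(\psi_s(\eta) - a_u) + O((\psi_s(\eta)-a_u)^2)$; the linearized bound $\psi_s(\eta) - a_u \lesssim (\eta - a_u)e^{\lambda_u s}$ then ensures $\int_0^{T_1}(b'(\psi_s(\eta)) - \lambda_u)\,ds$ is bounded uniformly in $c$ (since the resulting integrals evaluate to constants depending only on $x_\pm - a_u$). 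Hence $\psi_{T_1}'(\eta)$ is bounded by a constant multiple of $e^{\lambda_u T_1} \leq (x_+ - a_u)/(\eta - a_u)$, and since $\eta - a_u \geq c^{1/4}C_\epsilon/2$, the product $c^{1/4}\psi_{T_1}'(\eta)$ is bounded in terms of $C_\epsilon$ alone. On $[T_1, \frac{1}{4\lambda_u}\log(c^{-1})+T]$, Theorem \ref{4.4a} shows the length is bounded uniformly in $c$ on the good event, so a Gronwall argument on this piece contributes only a fixed multiplicative factor. Combining these estimates yields the required bound.
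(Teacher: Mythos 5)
Your decomposition and overall strategy match the paper's: split at a fixed $t_0$, use Theorem \ref{3.6II} on $[0,t_0]$ and Theorem \ref{4.3a} on $[t_0, \frac{1}{4\lambda_u}\log(c^{-1})+T]$, and then reduce to a sensitivity bound for the flow $\psi_t$ near $a_u$ on the window $[0, \frac{1}{4\lambda_u}\log(c^{-1})+T]$, controlled via a derivative estimate of the form $\psi_t'(\eta) \lesssim c^{-1/4}C_\epsilon^{-1}$ up to a Gronwall factor for the final fixed-length piece. The one place where your sketch is imprecise is the claim ``the linearized bound $\psi_s(\eta) - a_u \lesssim (\eta-a_u)e^{\lambda_u s}$'': by \eqref{derivpsi} this holds as stated only when $b'$ is decreasing on $[a_u, x_+]$ (so $b''\leq 0$ there); when $b'$ is increasing the inequality reverses and a short bootstrap or change of variables is needed to still conclude that $\int_0^{T_1}(b'(\psi_s(\eta))-\lambda_u)\,ds$ is bounded independently of $c$ — for instance, writing it as $\int_\eta^{\psi_{T_1}(\eta)} \frac{b'(u)-\lambda_u}{b(u)}\,du$, whose integrand is bounded on $[x_-,x_+]$. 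The paper sidesteps this by importing the derivative bound directly from the proof of Theorem \ref{4.4a}, where the two monotonicity cases are treated separately; your argument is correct in spirit but would need that extra care to close the increasing-$b'$ case.
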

\begin{proof}
By Theorem \ref{3.6II}, 
\[
\sup_{0 \leq t \leq t_0} |X_{n(t)}(a_u) - \psi_t(a_u)| \to 0
\]
in probability as $c \to 0$. But also, by the mean-value theorem,
\[
\sup_{0 \leq t \leq t_0} \left | \psi_t(a_u+c^{\frac{1}{4}}Z_{\infty}(a_u) - \psi_t(a_u) \right | \leq \sup_{0 \leq t \leq t_0} \sup_{a_s^- \leq x \leq a_s^+} \psi_t'(x) c^{\frac{1}{4}}|Z_{\infty}(a_u)| \to 0
\]
in probability as $c \to 0$. Hence
$$\sup_{0\leq t\leq t_0}\left|X_{n(t)}(a_u)-\psi_{t}\left(a_u+c^{\frac{1}{4}}Z_{\infty}(a_u)\right)\right| \to 0$$
in probability as $c \to 0$. It therefore suffices to consider the case when $t > t_0$.

We split
\begin{align*}
\left|X_{n(t)}(a_u)-\psi_{t}\left(a_u+c^{\frac{1}{4}}Z_{\infty}(a_u)\right)\right|\leq & \left|X_{n(t)}(a_u)-\psi_{t}\left(a_u+c^{\frac{1}{4}}\widetilde{Z}_{n(t_0)}(a_u)\right)\right|\\
&+\left|\psi_t\left(a_u+c^{\frac{1}{4}}\widetilde{Z}_{n(t_0)}(a_u)\right)-\psi_{t}\left(a_u+c^{\frac{1}{4}}Z_{\infty}(a_u)\right)\right|.
\end{align*}
The result above gives that
$$\sup_{t_0<t\leq \frac{1}{4\lambda_u}\log(c^{-1})+T}\left|X_{n(t)}(a_u)-\psi_{t}\left(a_u+c^{\frac{1}{4}}\widetilde{Z}_{n(t_0)}(a_u)\right)\right| \to 0$$
in probability as $c \to 0$ and then $t_0 \to \infty$. Hence, all that remains to show is that $$\sup_{t_0<t \leq \frac{1}{4\lambda_u}\log(c^{-1})+T} \left|\psi_t\left(a_u+c^{\frac{1}{4}}\widetilde{Z}_{n(t_0)}(a_u)\right)-\psi_{t}\left(a_u+c^{\frac{1}{4}}Z_{\infty}(a_u)\right)\right|$$ also converges to $0$ in probability. 

Let 
\[
T_2 =\inf\left\lbrace t\geq 0: \psi_{t}\left ( a_u + c^{\frac{1}{4}} Z_{\infty}(a_u) \right )\not \in [x_-,x_+]\right\rbrace.
\]
By exactly the same arguments as those used in the proof of Theorem \ref{4.4a},  for each $\epsilon > 0$ there exists some constant $\widetilde{T}_\epsilon>0$ such that 
$$\lim_{t_0\to \infty}\lim_{c\to 0}\mathbb{P}\left(\left|T_2-\frac{1}{4\lambda_u}\log (c^{-1})\right| > \widetilde{T}_\epsilon\right) < \epsilon.
$$
Set $$A = \left [ a_u + c^{1/4} \left ( \widetilde{Z}_{n(t_0)}(a_u) \wedge Z_{\infty}(a_u) \right ), a_u + c^{1/4} \left ( \widetilde{Z}_{n(t_0)}(a_u) \vee Z_{\infty}(a_u)\right ) \right ].$$ It also follows from the proof of Theorem \ref{4.4a} that for each $\epsilon > 0$ there exists some constant $\widetilde{C}_\epsilon>0$ such that
\[
\lim_{t_0\to \infty}\lim_{c\to 0}\mathbb{P}\left( \sup_{t \leq T_1 \wedge T_2} \sup_{x \in A } \psi_t'(x) > c^{-1/4}\widetilde{C}_\epsilon\right) < \epsilon.
\]
Hence, on the high probability event
\[
\left \{ \left|T_1 \wedge T_2-\frac{1}{4\lambda_u}\log (c^{-1})\right| \leq \widetilde{T}_\epsilon \right \} \cap \left \{ \sup_{t \leq T_1 \wedge T_2} \sup_{x \in A } \psi_t'(x) \leq c^{-1/4}\widetilde{C}_\epsilon \right \},
\]
if $x \in A$ and $T_1 \wedge T_2 < t < \frac{1}{4\lambda_u}\log(c^{-1})+T$,
\[
\psi_t'(x) = \psi_{T_1 \wedge T_2}'(x) \exp \left ( \int_{T_1 \wedge T_2}^t b'(\psi_s(x)) ds \right ) \leq c^{-1/4}\widetilde{C}_\epsilon e^{\|b'\|_{\infty} (T+\widetilde{T}_\epsilon)}.
\]
By the mean value theorem,
\begin{align*}
\left|\psi_t\left(a_u+c^{\frac{1}{4}}\widetilde{Z}_{n(t_0)}(a_u)\right)-\psi_{t}\left(a_u+c^{\frac{1}{4}}Z_{\infty}(a_u)\right)\right|&\leq \sup_{ x \in A} \psi_t '(x) c^{\frac{1}{4}} \left|\widetilde{Z}_{n(t_0)}(a_u)- Z_{\infty}(a_u)\right|.
\end{align*}
Therefore
\begin{align*}
&\sup_{t \leq \frac{1}{4\lambda_u}\log(c^{-1})+T} \left|\psi_t\left(a_u+c^{\frac{1}{4}}\widetilde{Z}_{n(t_0)}(a_u)\right)-\psi_{t}\left(a_u+c^{\frac{1}{4}}Z_{\infty}(a_u)\right)\right|\\
&\leq \widetilde{C}_\epsilon e^{\|b'\|_{\infty} (T+\widetilde{T}_\epsilon)} \left|\widetilde{Z}_{n(t_0)}(a_u)- Z_{\infty}(a_u)\right|\\
&\leq \widetilde{C}_\epsilon e^{\|b'\|_{\infty} (T+\widetilde{T}_\epsilon)} \left|\widetilde{Z}_{n(t_0)}(a_u)- Z_{t_0}(a_u)\right|+\widetilde{C}_\epsilon e^{\|b'\|_{\infty} (T+\widetilde{T}_\epsilon)}\left|Z_{t_0}(a_u)- Z_{\infty}(a_u)\right|.
\end{align*}
By our assumptions at the start of Section \ref{sec:crit}, $\widetilde{Z}_{n(t_0)}(a_u)\to Z_{t_0}(a_u)$ in probability as $c\to 0$ and $Z_{t_0}(a_u)\to Z_{\infty}(a_u)$ in probability as $t_0\to \infty$. As a result, 
$$\sup_{0<t<\frac{1}{4\lambda_u}\log(c^{-1})+T}\left|\psi_t\left(a_u+c^{\frac{1}{4}}\widetilde{Z}_{n(t_0)}(a_u)\right)-\psi_{t}\left(a_u+c^{\frac{1}{4}}Z_{\infty}(a_u)\right)\right|\to 0$$
in probability as $c \to 0$ and then $t_0 \to 0$. 
\end{proof}
Finally we prove that once the harmonic measure flow gets close enough to the stable point it remains close to the stable trajectory for all time. 
\begin{theorem}\label{4.7a}
Suppose $a_s$ is a stable fixed point of $\psi_t(x)$ with $b'(a_s) = \lambda_s < 0$. Let $x$ be chosen sufficiently close to $a_s$ that $3 \lambda_s/2 < b'(y)< \lambda_s/2$ for all $y$ in the interval between $x$ and $a_s$. Then for any $\epsilon>0$,
$$\lim_{c\to 0}\mathbb{P}\left(\sup_{0\leq t< \infty} \left|X_{n(t)}(x)-\psi_{t}\left(x\right)\right|>\epsilon\right)= 0.$$
\end{theorem}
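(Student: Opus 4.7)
The strategy is to exploit the contractivity of the ODE near the stable fixed point: since $b'(y)<0$ for $y$ between $x$ and $a_s$, perturbations of $\psi_t(x)$ are exponentially damped rather than amplified, in sharp contrast with the unstable regime studied in Section \ref{sec:crit}. Setting $D_n = X_n(x) - \psi_{nc}(x)$ and proceeding as in the proof of Theorem \ref{4.2}, Taylor-expanding $b$ and invoking Lemma \ref{2.4} yields the recursion
$$D_{n+1} = (1 + c b'(\xi_n)) D_n + Y_{n+1}(x) + E_n,$$
where $\xi_n$ lies between $X_n(x)$ and $\psi_{nc}(x)$ and $|E_n| \leq C c^{3/2}\log(c^{-1})$. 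I will introduce a stopping time $\tau = \inf\{n : |D_n| \geq \delta\}$ for a $\delta \in (0, \epsilon)$ small enough that a $\delta$-neighbourhood of the arc between $x$ and $a_s$ still lies in the region where $b' \leq \lambda_s/2 < 0$. On $\{n \leq \tau\}$ the multiplier $1 + c b'(\xi_n)$ is at most $1 - c|\lambda_s|/2$, giving geometric contraction of iterated products.

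Setting $P_k = \prod_{j=0}^{k-1}(1 + c b'(\xi_j))$, the recursion solves to $D_n = P_n \widetilde M_n + P_n R_n$, where $\widetilde M_n := \sum_{k=1}^n Y_k/P_k$ is a martingale (each $P_k$ is $\mathcal{F}_{k-1}$-measurable through the predictability of the $\xi_j$'s) and $R_n := \sum_{k=1}^n E_{k-1}/P_k$ is a predictable remainder. The bounds $e^{-3kc|\lambda_s|/2} \leq P_k \leq e^{-kc|\lambda_s|/2}$ valid on $\{n\leq\tau\}$ give $|P_n R_n| \leq C c^{1/2}\log(c^{-1})/|\lambda_s|$ uniformly in $n$, which is $o(1)$. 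For the stochastic part $P_n \widetilde M_n$, I will split time into blocks of fixed continuous length $T_\ast = 2/|\lambda_s|$, so that $P_{n_k}/P_{n_{k-1}} \approx e^{-1}$. Applying Freedman's inequality (Theorem \ref{freed}) to $\widetilde M_{n_k} - \widetilde M_{n_{k-1}}$, with block quadratic variation bounded by $C c^{1/2}/P_{n_{k-1}}^2$ and increment bound $C\sqrt{c}/P_{n_{k-1}}$, yields $|\widetilde M_{n_k} - \widetilde M_{n_{k-1}}| \leq c^{1/8}/P_{n_{k-1}}$ except on an event of probability at most $\exp(-c^{-1/4})$. A telescoping estimate using $P_{n_k}/P_{n_{k'-1}} \lesssim e^{-(k-k'+1)}$ then gives $|P_{n_k} \widetilde M_{n_k}| \leq C c^{1/8}$ on the intersection of all good Freedman events, so that $|D_{n_k}| < \delta$ and hence $\tau > n_K$ on that intersection for any $K$.

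The main obstacle is closing the loop for the infinite time horizon. A union bound over $K$ blocks gives total failure probability at most $K\exp(-c^{-1/4})$, which vanishes as $c \to 0$ for any $K$ growing at most subexponentially in $c^{-1/4}$, thereby covering any polynomial-in-$c^{-1}$ time horizon. To pass to the full supremum over $t \in [0,\infty)$, I plan to exploit the Markov property of $(X_n)$: once the process has relaxed to its equilibrium scale (achieved in $O(1/|\lambda_s|)$ continuous time by the contractive Lyapunov estimate $\mathbb{E}[D_{n+1}^2\mid \mathcal{F}_n] \leq (1-c|\lambda_s|/2)D_n^2 + Cc^{3/2}(\log c^{-1})^2$), the per-block exit probability from $[-\delta,\delta]$ is bounded via a Gaussian tail by $\exp(-\kappa \delta^2 c^{-1/2})$, and a regenerative argument combined with a Borel--Cantelli estimate upgrades this to a bound on the probability of \emph{ever} exiting $[-\delta,\delta]$, which still vanishes as $c \to 0$. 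Consequently $\mathbb{P}(\tau=\infty) \to 1$, and thus $\sup_{0\leq t<\infty}|X_{n(t)}(x) - \psi_t(x)|\leq\delta<\epsilon$ with probability tending to $1$, as required.
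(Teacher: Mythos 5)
Your decomposition $D_n = P_n \widetilde M_n + P_n R_n$, with $P_n = \prod_{j<n}(1+cb'(\xi_j))$, is the discrete analogue of the paper's exponential reweighting $h(t,y)=e^{-\widehat I(0,t)}(y-\psi_t(x))$, and your remainder estimate on $P_nR_n$ and blockwise Freedman bound on $P_n\widetilde M_n$ mirror the paper's $\widehat L$- and $\widehat M$-estimates for the stable point. Up to any time horizon growing subexponentially in a power of $c^{-1}$ the argument is sound and follows the paper's route.

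The genuine gap is exactly where you flag it: closing the argument over $\sup_{0\le t<\infty}$. The regenerative Borel--Cantelli upgrade cannot deliver $\mathbb P(\tau=\infty)\to 1$. For every fixed $c>0$ the per-block exit probability $p_c\approx\exp(-\kappa\delta^2 c^{-1/2})$ is strictly positive, and by the strong Markov property together with irreducibility of the chain $(X_n)$ near $a_s$ (which holds since $h_\nu$ is continuous and, by the standing assumption of Section~\ref{sec:crit}, positive there), the number of blocks until a $\delta$-exit is stochastically dominated by a geometric variable with success probability of order $p_c$; so exit occurs almost surely in finite time and $\mathbb P(\tau=\infty)=0$ for every $c$. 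The second Borel--Cantelli lemma works \emph{against} your claim here, not for it. What your block computation actually delivers is $\mathbb P(\tau\le n_K)\le K\exp(-c^{-1/4})\to 0$ for any $K(c)=o(\exp(c^{-1/4}))$, i.e.\ a quantitative escape-time lower bound covering any polynomial-in-$c^{-1}$ horizon; this is all that is used downstream, since Corollary~\ref{corol4.8} only requires horizons of order $\log(c^{-1})$. If you wish to mirror the paper exactly, replace the fixed stopping threshold $\delta$ by the growing envelope $c^{1/6}e^{-\widehat I(0,t)}$ (the paper's $\widehat T_0$), apply Theorem~\ref{freed} with a $t$-dependent deviation $c^{1/4}\log(c^{-1})e^{-\widehat I(0,t)}$ matching the growth of the predictable quadratic variation of $\widehat M$, and close with a Gronwall step as in Theorem~\ref{4.2}; that bound on the martingale is $O(c)$ uniformly in $t$ and avoids the block union bound altogether.
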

\begin{proof}
The proof uses very similar methods to those in Section \ref{sec:crit}. As a result, just an outline of the main steps is provided.

Set
$$\widehat{I}(t_1,t_2):=\int_{t_1}^{t_2} b'(\psi_{s}(x)ds.$$
By the stability of $a_s$ and the assumptions on $x$, 
 $b'(\psi_t(x))< \lambda_s/2$ for all $t \geq 0$, so $\widehat{I}(t_1,t_2) < \lambda_s(t_2-t_1)/2 < 0$. 
Let
$$h(t,y):= e^{-\widehat{I}(0,t)}\left(y-\psi_{t}(x)\right).$$
Write
\begin{align}\label{hequation}
h(nc,X_{n}(x))&=\widehat{M}(a_s,n)+\widehat{L}(a_s,n)+\sum_{i=0}^{n-1}\left(cb'(\psi_{ic-}(x))-\widehat{I}((i-1)c,ic)\right)h(ic,X_{i}(x))).
\end{align}
where $\widehat{M}(a_s,n)=\sum_{i=0}^{n-1}e^{-\widehat{I}(0,ic)} Y_{i+1}(x)$ is a martingale term and $\widehat{L}(a_s,n)$ is a reminder term, analogous to those in \eqref{gequation}.
 
Define the stopping time
$$\widehat{T}_0=\inf_{r\geq 0}\{r :\; \left|X_{n(r)}(x)-\psi_{r}\left(x\right)\right|>c^{\frac{1}{6}}\}. $$
By the same argument as Lemma \ref{Lbound}, 
$$\sup_{0\leq r\leq t \wedge \widehat{T}_0}|\widehat{L}(a_s, n(r))|\leq c^{\frac{1}{5}}e^{-\widehat{I}(0,t)}.$$
The difference in the upper bound from that in Lemma \ref{Lbound} results from the change of sign of $b'$ near the stable point, which means $-I(0,r)$ is maximised by taking $r$ as large as possible. Using a similar method to that in Lemma \ref{Mbound},
\begin{align*}
\mathbb{P}\left( \sup_{0\leq r\leq t}|\widehat{M}(a_s,n(r))|> c^{\frac{1}{4}}\log(c^{-1})e^{-\widehat{I}(0,t)}\right) \to 0
\end{align*}
as $c \to 0$.
Then, by equation \eqref{hequation}, on the high probability event defined above, if $0\leq t< \widehat{T}_0$
\begin{align*}
|h(t,X_{n(t)}(x))|&\leq  2 c^{\frac{1}{5}} e^{-\widehat{I}(0,t)}+\sup_{0\leq r\leq t}\left|\sum_{i=0}^{n(r)-1}\left(cb'(\psi_{ic}(x))-\widehat{I}((i-1)c,ic)\right)h\left(ic,X_{i}(x)\right)\right|.
\end{align*}
As in the proof of Lemma \ref{4.2}
\[
\left|cb'(\psi_{ic}(x))-\widehat{I}((i-1)c,ic)\right| \leq c^2 \|b''\|_{\infty}\|b\|_{\infty}.
\]
Using that $|h(s, X_{n(s)})| \leq c^{1/6}e^{-\widehat{I}(0,s)}$ for all $s < \widehat{T}_0$, if $0\leq t< \widehat{T}_0$ then
\begin{align*}
\sup_{0\leq r\leq t}\left|\sum_{i=0}^{n(r)-1}\left(cb'(\psi_{ic}(x))-\widehat{I}((i-1)c,ic)\right)h(ic,X_{i}(x)))\right|&\leq c^{\frac{7}{6}}  \|b''\|_{\infty}\|b\|_{\infty} \sum_{i=0}^{n(t)-1}ce^{-\widehat{I}(0,ic)}\\
&\leq \frac{2c^{\frac{7}{6}}  \|b''\|_{\infty}\|b\|_{\infty} \int_0^t (-b'(\psi_s(x)))e^{-\widehat{I}(0,s)}ds}{|\lambda_s|}\\
&\leq \frac{2c^{\frac{7}{6}}  \|b''\|_{\infty}\|b\|_{\infty} e^{-\widehat{I}(0,t)}}{|\lambda_s|}.
\end{align*}
Therefore, if $0\leq t< \widehat{T}_0$, for $c$ sufficiently small, with high probability,
$$\left|X_{n(t)}(x)-\psi_{t}\left(x\right)\right|<  c^{\frac{1}{6}}$$
and thus with high probability, the stopping time $\widehat{T}_0$ never occurs. Hence,
 $$\lim_{c\to 0}\lim_{t\to \infty}\mathbb{P}\left(\sup_{0\leq r\leq t} \left|X_{n(r)}(x)-\psi_{r}\left(x\right)\right|>\epsilon\right)= 0.$$
Let $\Omega_t$ be the event, $$\Omega_t:=\left\lbrace \sup_{0\leq r\leq t} \left|X_{n(r)}(x)-\psi_{r}\left(x\right)\right|>\epsilon\right\rbrace$$ where $t$ is an integer. Since the events $\left\lbrace \Omega_t\right\rbrace_{t\geq 0}$ are increasing in $t$ it follows that $\lim_{t\to \infty}\mathbb{P}\left(\bigcup_{r=1}^t\Omega_r\right)=\mathbb{P}\left(\bigcup_{r=1}^{\infty}\Omega_r\right)$. Therefore
 $$\lim_{c\to 0}\mathbb{P}\left(\sup_{0\leq t<\infty} \left|X_{n(t)}(x)-\psi_{t}\left(x\right)\right|>\epsilon\right)= 0.$$
\end{proof}
Theorem \ref{4.5a} shows that when $0<t<\frac{1}{4\lambda_u}\log(c^{-1})+T$ the harmonic measure started at the unstable point $X_{n(t)}(a_u)$ moves a macroscopic distance from $a_u$ towards either $a_s^-$ or $a_s^+$. Once at this macroscopic distance the process will remain close to the trajectory started at $\psi_{t}\left(a_u+c^{\frac{1}{4}}Z_{\infty}(a_u)\right)$ which will converge towards the stable point $a_s^{\pm}$. However, by Theorem \ref{4.7a} once the process gets close to the stable point it will remain close to the ODE trajectory starting from that point for all time. Therefore, we can deduce the following corollary.
\begin{corollary}\label{corol4.8}
For all $\epsilon > 0$
$$\lim_{c\to 0}\mathbb{P}\left( \sup_{0<t<\infty}\left|X_{n(t)}(a_u)-\psi_{t}\left(a_u+c^{\frac{1}{4}}Z_{\infty}(a_u)\right)\right| > \epsilon \right ) = 0,$$
where $ Z_{\infty}(a_u)$ is a Gaussian with mean $0$ and variance given by $\frac{\rho_0h_{\nu}(a_u)}{2 \lambda_u}$.
\end{corollary}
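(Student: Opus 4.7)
The idea is to decompose the time axis $[0,\infty)$ into the critical window $[0,T_c]$, where $T_c := \frac{1}{4\lambda_u}\log(c^{-1})+T$ for a large constant $T$ to be chosen, and the tail $[T_c,\infty)$. On the first interval, Theorem \ref{4.5a} already provides the required control. On the second interval, we will apply Theorem \ref{4.7a} after verifying that by time $T_c$ the flow is trapped in a small neighbourhood of one of the two stable fixed points $a_s^{\pm}$ adjacent to $a_u$.

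Since $Z_{\infty}(a_u)$ is a centred non-degenerate Gaussian, for any $\epsilon'>0$ there is $\delta_{\epsilon'}>0$ with $\mathbb{P}(\delta_{\epsilon'}\le |Z_{\infty}(a_u)|\le \delta_{\epsilon'}^{-1})\ge 1-\epsilon'$. By symmetry, assume we are on the event where $Z_{\infty}(a_u)\in[\delta_{\epsilon'},\delta_{\epsilon'}^{-1}]$, so the ODE trajectory $\psi_t(a_u+c^{1/4}Z_{\infty}(a_u))$ is drawn towards $a_s^+$. The ODE analysis underlying Theorem \ref{4.4a} shows that this trajectory exits any fixed neighbourhood of $a_u$ at a time $\frac{1}{4\lambda_u}\log(c^{-1})+O(1)$, and standard one-dimensional ODE facts imply that once outside such a neighbourhood it converges to $a_s^+$ in further bounded time. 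Thus $T$ can be chosen, depending on $\epsilon$ and $\delta_{\epsilon'}$ but not on $c$, so that for all sufficiently small $c$,
\[
\bigl|\psi_{T_c}(a_u+c^{1/4}Z_{\infty}(a_u))-a_s^+\bigr|\le \epsilon/3.
\]
Combined with Theorem \ref{4.5a}, this yields an event of probability at least $1-2\epsilon'$ on which simultaneously $\sup_{0\le t\le T_c}|X_{n(t)}(a_u)-\psi_t(a_u+c^{1/4}Z_{\infty}(a_u))|\le \epsilon/3$ and $|X_{n(T_c)}(a_u)-a_s^+|\le 2\epsilon/3$. In particular, $X_{n(T_c)}(a_u)$ lies in the neighbourhood of $a_s^+$ to which Theorem \ref{4.7a} applies.

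For the tail we use the restart identity $X_{n(t)}(a_u)=X_{n(t),n(T_c)}(X_{n(T_c)}(a_u))$ from the proof of Theorem \ref{4.3a}, where $X_{n,k}$ has the same distribution as $X_{n-k}$ and is independent of $\mathcal{F}_{n(T_c)}$. Conditioning on $\mathcal{F}_{n(T_c)}$ and applying Theorem \ref{4.7a} to the restarted flow gives, with high probability,
\[
\sup_{t\ge T_c}\bigl|X_{n(t),n(T_c)}(X_{n(T_c)}(a_u))-\psi_{t-T_c}(X_{n(T_c)}(a_u))\bigr|\le \epsilon/3.
\]
Finally, $\psi_{t-T_c}(X_{n(T_c)}(a_u))$ is close to $\psi_t(a_u+c^{1/4}Z_{\infty}(a_u))$ uniformly in $t\ge T_c$: both trajectories start at time $T_c$ within $\epsilon/3$ of $a_s^+$, and near a stable fixed point the semigroup is contractive since $b'(a_s^+)<0$. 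Adding the three $\epsilon/3$ errors and letting $\epsilon'\downarrow 0$ concludes the proof.

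The main obstacle is the patching step at $T_c$: one must verify that the hypothesis of Theorem \ref{4.7a}, which is stated for a fixed deterministic initial point sufficiently close to $a_s^+$, can be invoked with the random initial point $X_{n(T_c)}(a_u)$, and that the choice of $T$ governing the ODE approach to $a_s^+$ can be made uniform over the event $\{\delta_{\epsilon'}\le |Z_{\infty}(a_u)|\le \delta_{\epsilon'}^{-1}\}$ without affecting the vanishing of the error estimates as $c\to 0$.
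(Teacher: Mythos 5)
Your proposal is correct and follows the paper's intended argument exactly: combine Theorem \ref{4.5a} on the critical window $[0,T_c]$ with Theorem \ref{4.7a} and a restart at time $T_c$ for the tail, using that the ODE trajectory is near $a_s^{\pm}$ by time $T_c$ (with $T$ chosen on a high-probability event for $Z_\infty(a_u)$) and contractive thereafter since $b'(a_s^{\pm})<0$. The paper offers only the informal paragraph preceding the corollary in lieu of a proof, so you have in fact supplied more detail than the authors; the patching obstacle you flag at the end is resolved as you suspect, since $T_c$ is deterministic, conditioning on $\mathcal{F}_{n(T_c)}$ makes the restarted flow independent of the initial point, and the estimates in the proof of Theorem \ref{4.7a} are uniform over deterministic starting points in the stated neighbourhood of $a_s^{\pm}$.
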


\bibliographystyle{plain}

\bibliography{bibliography}

\end{document}